\documentclass[12pt, leqno]{amsart}

\usepackage{graphicx}
\usepackage{amssymb,amsmath,amsthm,amscd}
\usepackage{mathrsfs}
\usepackage{enumerate}
\usepackage{enumitem}
\usepackage{verbatim}
\usepackage[usenames,dvipsnames]{color}
\usepackage[colorlinks=true, pdfstartview=FitV,
linkcolor=blue,citecolor=blue,urlcolor=blue]{hyperref}
\usepackage[all]{xy}
\usepackage{tikz}
\usetikzlibrary{shapes, arrows}
\usepackage{tikz-cd}
\usetikzlibrary{matrix, arrows}
\usepackage{stmaryrd} 
\usepackage{dsfont}
\usepackage{bm} 
\usepackage{scalerel}[2016/12/29] 
\usepackage{amsthm}
\usepackage{adjustbox}
\usepackage{float}
\usepackage{array}
\usepackage{multirow} 
\usepackage{caption} 
\usepackage{extarrows}
\usetikzlibrary{decorations.pathmorphing}
\usetikzlibrary{fit}

\makeatletter
\newcommand*{\Relbarfill@}{\arrowfill@\Relbar\Relbar\Relbar}
\newcommand*{\xeq}[2][]{\ext@arrow 0055\Relbarfill@{#1}{#2}}
\makeatother

\makeatletter
\newcommand{\notocsection}[1]{%
  \begingroup
  \let\@tocwrite\@gobbletwo
  \section*{#1}%
  \endgroup
}
\makeatother

\newcommand{\yellowcell}[1]{%
 \tikz[baseline=(X.base)]{ \node[fill=yellow,inner sep=2pt] (X) {$#1$}; }%
 }

\allowdisplaybreaks[4]

\DeclareMathSymbol{\shortminus}{\mathbin}{AMSa}{"39}

\newcommand{\nc}{\newcommand}

\numberwithin{equation}{section}

\usepackage[colorinlistoftodos]{todonotes}
\usepackage{mathtools}
\mathtoolsset{showonlyrefs,showmanualtags}
\usepackage{empheq}

\theoremstyle{plain}
\newtheorem{lem}{Lemma}[section]
\newtheorem{pro}[lem]{Proposition}
\newtheorem{thm}[lem]{Theorem}
\newtheorem{cor}[lem]{Corollary}
\newtheorem{defi}[lem]{Definition}

\newcommand{\Pro}{\begin{pro}}
	\newcommand{\enpro}{\end{pro}}
\newcommand{\Lem}{\begin{lem}}
	\newcommand{\enlem}{\end{lem}}
\newcommand{\Thm}{\begin{thm}}
	\newcommand{\enthm}{\end{thm}}
\newcommand{\Cor}{\begin{cor}}
	\newcommand{\encor}{\end{cor}}
\newcommand{\Defi}{\begin{defi}}
	\newcommand{\enDefi}{\end{defi}}
\newcommand{\Proof}{\begin{proof}}
	\newcommand{\enproof}{\end{proof}}

\newtheorem*{assum}{Assumptions}

\theoremstyle{definition} 
\newtheorem{rem}[lem]{Remark}

\newtheorem{Convention}[lem]{Convention}

\newcommand{\Conv}{\begin{Convention}}
	\newcommand{\enconv}{\end{Convention}}
\nc{\Rem}{\begin{rem}}
	\nc{\enrem}{\end{rem}}

\newcommand{\arxiv}[1]{\href{http://arxiv.org/abs/#1}{\tt arXiv:\nolinkurl{#1}}}

\newtheorem{maintheorem}{Theorem}

\nc{\epito}{\twoheadrightarrow}

\nc{\rmkend}{\hfill$\triangledown$}
\nc{\defend}{\hfill$\triangle$}

\nc{\ccc}{\mathfrak{c}}
\nc{\CCC}{\mathfrak{C}}
\nc{\Ck}{\mathfrak{C}}

\nc{\kor}{\mathbb{C}}
\nc{\indx}{\mathbb{I}}

\nc{\CC}{C}
\nc{\cc}{c}
\nc{\sss}{s}
\nc{\ck}{\mathfrak{c}}

\nc{\Bg}{B}
\nc{\Ag}{A}
\nc{\Asmone}{\pmb{\Ag}^{(-1)}(z)}
\nc{\Aps}{\pmb{\Ag}_+(z)}
\nc{\Apsone}{\pmb{\Ag}^{(1)}_+(z)}
\nc{\Apsmone}{\pmb{\Ag}^{(-1)}_+(z)}
\nc{\Ams}{\pmb{\Ag}_-(z)}
\nc{\Hg}{H}
\nc{\Thg}{\Theta}
\nc{\Thgs}{\pmb{\Theta}}
\nc{\Thgsr}{\pmb{\grave{\Theta}}}
\nc{\Apsvar}[1]{\pmb{\Ag}^{(#1)}_+(z)} 

\nc{\smin}{{\shortminus}}

\nc{\Ui}{\widetilde{\mathbf{U}}^\imath}
\nc{\ui}{\mathbf{U}^\imath} 
\nc{\uig}{\mathbf{U}^\imath_{\mu, \boldsymbol\gamma}} 
\nc{\DrUi}{{}^{\mathrm{Dr}}\Ui}
\nc{\fext}[2]{{#1}[\negthinspace[#2]\negthinspace]}

\nc{\gTh}{\grave{\Theta}}

\nc{\DD}{\pmb{D}}
\nc{\KK}{\mathbb{K}}

\nc{\xgp}{x^+}
\nc{\Xgp}{\pmb{x}^+}
\nc{\xgm}{x^-}
\nc{\Xgm}{\pmb{x}^-}
\nc{\xgpm}{x^{\pm}}
\nc{\psig}{\phi^+}
\nc{\phig}{\phi^-}
\nc{\phipm}{\phi^\pm}
\nc{\Psig}{\pmb{\phi^+}}
\nc{\Phig}{\pmb{\phi^-}}
\nc{\Phipm}{\pmb{\phi^\pm}}
\nc{\hg}{h}

\nc{\bY}{\mathbf{Y}}
\nc{\bT}{\mathbf{T}}

\nc{\Eg}{e}
\nc{\Kg}{K}

\nc{\Rep}{\on{Rep}} 

\DeclareRobustCommand{\sqbin}{\genfrac{[}{]}{0pt}{}}

\nc{\qq}{(q-q^{-1})^{-1}}
\nc{\factor}{\Omega}

\nc{\chring}{\Z[Y_a^{\pm 1}]_{a \in \C^\times}}
\nc{\chmap}{\chi_q}
\nc{\chmod}{\Z[\mathbf{Y}_a^{\pm 1}]_{a \in \C^\times}}
\nc{\ourchmap}{\boldsymbol{\chi}_q}
\nc{\Yring}{\mathcal{Y}} 
\nc{\Ymod}{\pmb{\mathcal{Y}}}
\nc{\Yg}{\mathbf{Y}_{i,a}}
\nc{\Ys}{\mathbf{Y}_{i,\mathbf{s}}}
\nc{\Kmat}{\mathcal{K}^0} 

\nc{\Tr}{\on{Tr}}
\nc{\id}{\on{id}}

\nc{\ourR}{\mathbf{R}}
\nc{\ourQ}{\mathbf{Q}}
\nc{\ourY}{\mathbf{Y}}

\nc{\Oq}{\widetilde{\mathcal{O}}_q}
\nc{\Oqi}{\mathcal{O}_q^{[i]}}
\nc{\Oqc}{\mathcal{O}_q^{\mathbf{c}}(\widehat{\g})}
\nc{\car}{\mathcal{H}}
\nc{\qaa}{U_q(\widehat{\mathfrak{g}})}
\nc{\qla}{U_q(L\mathfrak{g})}
\nc{\uqla}{\widetilde{U}_q(L\mathfrak{g})}
\nc{\drqaa}{{}^{\mathrm{Dr}}\qaa}
\nc{\uqsl}{U_q(\widehat{\mathfrak{sl}}_2)}
\nc{\uLsl}{U_qL\mathfrak{sl}_2}
\nc{\Serre}{\mathsf{Serre}}
\nc{\Sym}{\on{Sym}}
\nc{\UXp}{UX_+} 

\nc{\Tbr}{\mathbf{T}}
\nc{\degdr}{\on{deg}^{\mathrm{Dr}}}
\nc{\Uq}{\mathbf{U}}
\nc{\Uu}{\widetilde{\mathbf{U}}}
\nc{\gaf}{\widehat{\mathfrak{g}}}

\nc{\ada}{\ad_{\bhg_{-1}}}
\nc{\adb}{\ad_{\bhg_{1}}}
\nc{\adc}{\ad_{\bHg_{1}^{(2)}}}

\nc{\tAg}{\widetilde{A}}
\nc{\hAg}{\widehat{A}}

\nc{\bH}{\overline{\Hg}}
\nc{\bhg}{\overline{\hg}}
\nc{\tHg}{\widetilde{H}}

\nc {\DrOq}{{}^{\mathrm{Dr}}\Oq}
\nc{\gup}[1]{^{(#1)}}
\nc{\gupp}{^{(1),+}}
\nc{\mi}{^{-1}}
\nc{\adh}{\operatorname{ad}_{\bar{\hg}_{-1}}}
\nc{\adhp}{\operatorname{ad}_{\bar{\hg}_{1}}}
\nc{\Omg}{\Omega^{-1}}
\nc{\Htwo}{\overline{H}_1^{(2)}}
\nc{\ad}{\operatorname{ad}}

\nc{\sspan}{\on{span}}

\newcommand{\commentout}[1]{}

\newcommand{\on}{\operatorname}
\nc{\be}{\begin{enumerate}}
	\nc{\ee}{\end{enumerate}}
\newcommand{\eq}{\begin{equation}}
	\newcommand{\eneq}{\end{equation}}
\nc{\bc}{\begin{cases}}
	\nc{\ec}{\end{cases}}
\newcommand{\eqn}{\begin{eqnarray*}}
	\newcommand{\eneqn}{\end{eqnarray*}}
\newcommand{\ba}{\begin{array}}
	\newcommand{\ea}{\end{array}}

\newcommand{\C}{{\mathbb C}}

\newcommand{\Z}{{\mathbb Z}}
\newcommand{\N}{{\mathbb N}}

\newcommand{\g}{{\mathfrak{g}}}
\nc{\Ad}{\operatorname{Ad}}

\nc{\gr}{\on{gr}}

\nc{\Aut}{\operatorname{Aut}}

\nc{\coker}{\operatorname{coker}}

\nc{\Img}{\on{Im}}
\nc{\res}{\on{res}}

\nc{\bA}{\mathbf{A}}
\nc{\bTh}{\boldsymbol\Theta}
\nc{\aTh}{\boldsymbol{\acute{\Theta}}}
\nc{\As}[2]{\mathbf{A}_{#1}(#2)} 
\nc{\Ths}[2]{\boldsymbol{\Theta}_{#1}(#2)} 
\nc{\aThs}[2]{\boldsymbol{\acute{\Theta}}_{#1}(#2)}
\nc{\bDel}{\boldsymbol\Delta}
\nc{\Dels}{\boldsymbol\Delta(z)} 
\nc{\ai}{\alpha^\vee_i}
\nc{\half}{\textstyle \frac12}
\nc{\bv}{\mathbf{v}}
\nc{\bu}{\mathbf{u}}
\nc{\bw}{\mathbf{w}} 
\nc{\bz}{\mathbf{z}} 
\nc{\hdiff}{D_\mu^\lambda} 
\nc{\hcor}{\C} 
\nc{\Pol}{\on{Pol}_\mu^\lambda}
\nc{\Vi}{\widetilde{\mathbb{U}}^{\imath}} 
\nc{\Ckk}{C} 
\nc{\RHS}{\on{RHS}} 
\nc{\LHS}{\on{LHS}}
\nc{\hlf}{^{\frac12}}
\nc{\mhlf}{^{-\frac12}}

\nc{\modv}[1]{{#1}\operatorname{-mod}}

\nc{\bl}{\bigl(}
\nc{\br}{\bigr)}

\newlength{\mylength}
\DeclareRobustCommand{\SkipTocEntry}[5]{}

\AtBeginDocument{%
   \def\MR#1{}
}

\title[Shifted quantum affine symmetric pairs]
{Shifted quantum affine symmetric pairs: finite presentations and graded coideal structures}

\author[J.-R. Li]{Jian-Rong Li}
\address{Faculty of Mathematics, University of Seville, Calle Tarfia s/n, 41012 Seville, Spain, OrciD: 0000-0001-7896-7391} 
\email{\href{mailto:lijr07@gmail.com}{lijr07@gmail.com}}

\author[T. Prze\'{z}dziecki]{Tomasz Prze\'{z}dziecki}
\address{Faculty of Mathematics, University of Vienna, Oskar Morgenstern Platz 1, 1090 Vienna, Austria, OrciD: 0000-0001-9700-1007}
\email{\href{mailto:tomasz.przezdziecki@univie.ac.at}{tomasz.przezdziecki@univie.ac.at}}

\keywords{Quantum symmetric pairs, $\imath$quantum groups, shifted quantum affine algebras} 

\subjclass[2020]
{17B37, 17B67, 81R10}

\thanks{This work was supported by the Austrian Science Fund (FWF): PAT 9039323, Grant-DOI 10.55776/PAT9039323. JL was supported by the Beatriz Galindo Senior grant BG24/00114 from the Spanish Ministry of Science, Innovation and Universities. TP was supported by the EPSRC grant No.\ EP/W022834/1 \emph{Kac--Moody quantum symmetric pairs, KLR algebras and generalized Schur--Weyl duality}, as well as the grant PAT 9039323, Grant-DOI 10.55776/PAT9039323.}

\begin{document}

\begin{abstract}
Shifted quantum affine symmetric pair algebras were recently introduced as a trigonometric analogue of shifted twisted Yangians, whose truncations are expected to quantize fixed point loci in generalized affine Grassmannian slices. They are defined via a loop presentation with infinitely many generators and relations. In this paper, we consider anti-dominantly shifted quantum affine symmetric pair algebras of simply-laced split type. Our main result is a construction of a finite presentation \`{a} la Levendorskii. We apply it in type $\mathsf{A}$ to construct an embedding of a shifted quantum symmetric pair algebra into the corresponding shifted quantum affine algebra, yielding a graded coideal structure. 
\end{abstract}

\maketitle

\setcounter{tocdepth}{1}
\tableofcontents


\section{Introduction}
\nc{\Uh}{U_q(\widetilde{\mathfrak{h}})}
\nc{\Uhz}{\fext{U_q(\widetilde{\mathfrak{h}})}{z}}
\nc{\ichmap}{\chi_q^\imath}

\subsection{Background}

Yangians were introduced by Drinfeld \cite{drinfeld-hopf}, in relation to solving the quantum Yang--Baxter equation, as a conceptual framework for the quantum inverse scattering method. While the original definition involved the so-called $J$-presentation, Drinfeld soon found a different presentation in terms of currents, which became known as ``Drinfeld's second realization'' or ``Drinfeld's new presentation'' \cite{drinfeld-dp}. The current presentation proved especially useful for representation theory, yielding, for example, a classification of finite-dimensional simple modules \cite{drinfeld-dp, CP-ens, chari-pressley}. 
One of the disadvantages of the current presentation is that it interacts badly with the coproduct. Indeed, there is no closed formula for the coproduct on the infinitely many generators in Drinfeld's second realization. Approximate formulae, which are sufficient in representation-theoretic applications, such as $q$-character theory \cite{knight}, were obtained in \cite{CP-fr}. 
This problem motivated the search for a finite, and hence more manageable, version of Drinfeld's new presentation. 
Levendorskii \cite{Lev-pr} showed that a small finite subset of generators and relations suffices to derive all the relations in Drinfeld's new presentation. In fact, it suffices to only take the first two modes in each of the generating currents with their corresponding relations. This construction became known as the ``Levendorskii presentation'' of the Yangian.    

The problem resurfaced, about two decades later, in the theory of \emph{shifted} Yangians. The latter were introduced by Brundan and Kleshchev \cite{BrKl-W}, whose seminal result identified truncated shifted Yangians with finite $W$-algebras of type $\mathsf{A}$. Kamnitzer, Webster, Weekes and Yacobi \cite{KWWY} later showed that, for dominant shifts, these truncations emerge naturally as quantizations of slices in the affine Grassmannian, with the shift parameter corresponding to weights indexing opposite Schubert cells. This result was generalized to arbitrary shifts in \cite{BFN}, with affine Grassmannian slices replaced by certain Coulomb branches associated to quiver gauge theories. 
In light of these geometric realizations, the connection between $W$-algebras and truncated shifted Yangians can be seen as a quantum analogue of the celebrated Mirkovi\'{c}--Vybornov isomorphism \cite{MVold, MVnew, WWY}. 

Since shifted Yangians are defined by shifting the range of Cartan generators in Drinfeld's second realization, it is not clear that they should admit any coproduct generalizing the natural Hopf algebra structure on ordinary Yangians. 
First indications that such a shifted coproduct structure should exist already appeared in Brundan and Kleshchev's original paper \cite{BrKl-W} under the guise of baby comultiplication. 
Finkelberg, Kamnitzer, Pham, Rybnikov and Weekes \cite{FKPRW} later solved this problem systematically using the following three-step approach.   
They first constructed a minimalistic presentation \`{a} la Levendorskii for anti-dominantly shifted Yangians, used it to define coproduct maps $$\Delta_{\mu_1, \mu_2} \colon Y_{\mu_1+\mu_2} \to Y_{\mu_1} \otimes Y_{\mu_2},$$ and then extended this construction to all shifted Yangians via shift homomorphisms. 
A geometric interpretation in terms of multiplication of affine Grassmannian slices later emerged in \cite{KPWham}. 
A counterpart of the results from \cite{FKPRW} for shifted quantum affine algebras was given by Finkelberg and Tsymbaliuk \cite{FT19}. Further constructions of Levendorskii-style presentations and associated coproducts have also appeared in the toroidal \cite{GNWco, Laurie2024} and super \cite{Li-super} settings. 

In recent years, partial generalizations of some of the aforementioned results emerged in the context of  shifted \emph{twisted} Yangians (now also often referred to as $\imath$Yangians).  
Twisted Yangians, introduced by Olshanski \cite{Olshan}, are coideal subalgebras in ordinary Yangians. They arise in mathematical physics, in connection to the Yang--Baxter and reflection equations, as algebras controlling the symmetries of integrable systems with boundaries. From an algebraic point of view, they appear naturally in the context of Gelfand--Tsetlin theory for simple Lie algebras of classical types \cite{Molev07}. Twisted Yangians were originally defined via an RTT presentation. Later, realizations in the spirit of Drinfeld's $J$-presentation were given in \cite{Jpr1, Jpr2}.  
A current presentation was constructed only quite recently by Lu, Wang and Zhang \cite{ty-2, ty-1, ty-3} using a Gauss decomposition approach and degeneration from similar results for quantum symmetric pair algebras (also known as $\imath$quantum groups). 

The current presentation was a key development which allowed much of the theory of shifted Yangians to be generalized to the twisted setting. Analogues of the Brundan--Kleshchev isomorphism for $W$-algebras of classical types were obtained in \cite{LPTTW, TT-Yan}. Truncations via GKLO representations and partial quantization results, in the setting of fixed point loci or symmetric quotients of the affine Grassmannian, were established in \cite{BPT, LWW1, LWW2, SSX, Zichang}, while interpretations in terms of Coulomb branches were proposed in \cite{LWW1, NakajimaQSP, SSX, SSX2, Zichang}. Moreover, trigonometric versions of shifted twisted Yangians - called shifted quantum affine symmetric pair algebras or shifted $\imath$quantum groups - and their GKLO representations were introduced in \cite{LP-gklo, LWW-i}. 

Nevertheless, fundamental algebraic questions regarding minimalistic presentations and possible comodule structures on shifted twisted Yangians and their trigonometric analogues have remained largely unexplored. The recent results of Lu \cite{min-1, min-2}, who established a Levendorskii-style presentation for \emph{unshifted} twisted Yangians of quasi-split type, are a notable exception. However, in the unshifted case, twisted Yangians are already known to be coideal subalgebras via the RTT presentation, so this result does not reveal new structure.

\subsection{Main results}

In this paper, we will work in the trigonometric setting. Quantum symmetric pair coideal subalgebras of Kac--Moody type were first systematically studied by Kolb \cite{kolb-14}. They are known to admit a Serre-type presentation, with  generators corresponding to nodes of a decorated Dynkin (i.e., Satake) diagram \cite{DeClercq21, KolbYakimov21}. In the affine case, an analogue of Drinfeld's second realization was first established by Baseilhac and Kolb \cite{bas-kol-20} for the $q$-Onsager algebra, and later generalized to the quasi-split case of any rank in a series of works by Lu, Pan, Wang and Zhang \cite{ lu-wang-21, WangZhang2, LWZ-quasi, ZhangDr, LPWZ}. 
Here we will restrict ourselves to quantum affine symmetric pair algebras of \emph{simply-laced split type}\footnote{In fact, we will work in a slighly more general framework which also encompasses, for example, toroidal versions of quantum symmetric pair algebras. We set this subtlety aside in the introduction.}, i.e., those corresponding to undecorated Satake diagrams with no multiple edges. In their loop realization, these algebras can be presented using  ``real'' $\bA_i(z)$ and ``imaginary'' $\bTh_i(z)$ generating series, indexed by the vertices of the underlying finite Satake diagram. The imaginary currents are required to commute. Moreover, there are real--real and real--imaginary  quadratic relations, together with an analogue of Drinfeld--Serre relations. 

Now let $\mu$ be an arbitrary coweight. The shifted quantum affine symmetric pair algebras $\Ui_\mu$ introduced in \cite{LP-gklo, LWW-i} involve exactly the same relations as for $\mu=0$, but the range of the imaginary series $\bTh_i(z)$ is shifted by $\mu_i = \ai(\mu)$. The shift has a profound effect on the structure of the algebra, with anti-dominant shifts being of particular interest. It is worth emphasizing that this loop presentation gives the \emph{definition} of $\Ui_\mu$. In particular, $\Ui_\mu$ does not \emph{by default} admit a Serre-type presentation as in \cite{kolb-14}, or a description as a coideal subalgebra, unless $\mu = 0$. 
The following Theorem is the main result of the paper. 

\begin{maintheorem} \label{main A}
If $\mu$ is anti-dominant then $\Ui_\mu$ admits a presentation with generators $A_{i,0}, \cdots, A_{i, \mu_i -1}$ and a finite subset of the relations from the loop presentation. 
\end{maintheorem}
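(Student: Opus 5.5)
The plan is to follow the three-step strategy of \cite{FKPRW} and \cite{FT19}, adapted to the $\imath$-setting. I would first define an auxiliary algebra $\widehat{U}_\mu$ by finitely many generators and relations. The generators would be the first few real modes $A_{i,r}$ (those listed in the statement, together with the first one or two modes beyond them as needed). The relations would be the corresponding finitely many instances of the loop relations: the real--imaginary relations in lowest degree, the same-vertex and adjacent-vertex real--real relations in the two lowest degrees, and the lowest Drinfeld--Serre relation. Because $\mu$ is anti-dominant, the series $\bTh_i(z)$ starts in degree $-\mu_i\geq 0$. The real--real relation at vertex $i$ then expresses the low Theta-modes directly through commutators of the $A_{i,r}$ in this finite range, so the imaginary generators need not be included separately. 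There is an obvious homomorphism $\widehat{U}_\mu\to\Ui_\mu$, and the goal is to construct an inverse, i.e.\ to recover all loop generators in $\widehat{U}_\mu$ and verify all loop relations there.

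Second, I would build the remaining generators inductively. From the lowest Theta-modes I would extract elements playing the role of $H_{i,\pm1}$, obtained by taking logarithms of $\bTh_i(z)$ normalized by its leading coefficient. The real--imaginary relation in lowest degree says that $[H_{i,1},A_{j,r}]$ equals $[c_{ij}]A_{j,r+1}$ plus $\imath$-correction terms, where the correction terms involve $A_{j,r-1}$ and the Cartan-type central part. Inverting $[c_{ii}]=[2]$, I would define
\[
A_{i,r+1} := [2]^{-1}\bigl(\ad_{H_{i,1}}(A_{i,r}) - \text{corrections}\bigr),
\]
and similarly define the lower modes using $H_{i,-1}$. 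Higher Theta-modes would then be defined by reading off the same-vertex real--real relation $[A_{i,r},A_{i,s}]$ as a formula for $\Theta_{i,r+s}$. A degree (loop-grading) argument would show that these definitions are independent of the choices of $(r,s)$ with fixed sum.

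Third, I would verify all loop relations by induction on the total degree. At each step I would apply $\ad_{H_{i,\pm1}}$, or the analogous operator for a neighbouring vertex $j$, to a relation already established in lower degree. The adjacent-vertex real--real relations and the real--imaginary relations should propagate cleanly this way, since $\ad_{H_{i,1}}$ acts on $A_j(z)$ as a difference operator in $z$. The Serre relations would follow by applying the same operators to the base Serre relation. This uses a standard symmetrization argument: the symmetrized Serre expression is preserved up to lower-degree terms already known to vanish.

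I expect the main obstacle to be the commutativity $[\Theta_{i,r},\Theta_{j,s}]=0$ together with the full same-vertex relation for $\bA_i(z)\bA_i(w)$. Unlike the Yangian case, the $\imath$ real--real relation at a single vertex has $\bTh_i$ on its right-hand side, so the two statements are entangled. I would first prove that each $H_{i,1}$ commutes with all Theta-modes, by a Jacobi identity computation using only the base relations. I would then argue by generating functions that the operator $\ad_{H_{i,1}}$ determines $\bTh_j(z)$ up to terms of lower degree, and run a joint induction on the degree for the two statements. If this fails directly, the fallback is to embed the problem, where possible, into the unshifted or dominantly shifted case via a shift homomorphism and to detect the relation there.
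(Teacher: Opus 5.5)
Your overall architecture matches the paper: only real modes (and $H_{i,1}$) as generators, higher modes defined via $\operatorname{ad}_{H_{i,1}}$, and a joint induction in which $\Theta$--$\Theta$ commutativity and the same-vertex relations are entangled. However, the mechanisms you propose for running that induction are those of the quantum affine case and do not work here, so there is a genuine gap.

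In the $\imath$ setting there is no $H_{i,-1}$ and no loop grading. The series $\boldsymbol\Theta_i(z)$ only yields $H_{i,m}$ with $m\ge 1$. The relation $[H_{i,1},A_{j,r}]=[a_{ij}](A_{j,r+1}-\mathfrak{C}A_{j,r-1})$ shifts in both directions at once. The right-hand side of $R_{i,i}(r,s)$ is $-2^{\delta_{r,s}}\mathfrak{C}^{\min(r,s)}\overline{\Theta}_{i,|r-s|+1}$, which depends on $|r-s|$, not on $r+s$.

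Consequently there is no degree argument for the well-definedness of the higher $\Theta_{i,s}$. Moreover, $\operatorname{ad}_{\overline{H}_{i,1}}$ applied to a known $R_{i,i}(r,s)$ only produces the combination $R_L(r{+}1,s)+R_L(r,s{+}1)-\mathfrak{C}R_L(r{-}1,s)-\mathfrak{C}R_L(r,s{-}1)$, never a single new relation.

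For $|r-s|\ge 3$ the paper propagates along diagonals of fixed $s-r$. This uses $[H_1,\Theta_d]=0$ and one seed per diagonal, supplied by the definition $J(s)$ of $\Theta_s$. The diagonals $|r-s|\le 2$ cannot be filled with $\operatorname{ad}_{\overline{H}_1}$ alone: applied to $R(s,s{+}1)$ it only determines $R_L(s{+}1,s{+}1)+R_L(s,s{+}2)$.

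Genuinely new input is needed at exactly this point. The paper proves $[\overline{H}_2,A_r]=A_{r+2}-\mathfrak{C}^2A_{r-2}$ (Lemma~\ref{lem: H2 comm}) from finitely many real--real relations. It does so via the cubic free-algebra identity of Lemma~\ref{lem: key id}, which also seeds the real--imaginary relations at every later level, plus a Gr\"obner basis computation for $\mu\in\{-1,-2\}$. It then combines $\operatorname{ad}_{\overline{H}_1}$ and $\operatorname{ad}_{\overline{H}_2}$ to get $R(r,r)$, $R(r-1,r)$, $R(r-2,r)$ for all $r$.

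That lemma uses $[H_1,\Theta_{2-\mu}]=0$. In turn, the Levendorskii-type argument for $[H_1,\Theta_{N+2-\mu}]=0$ compares it with $[\overline{H}_2,\overline{\Theta}_{N+1-\mu}]$ and $[\overline{H}_{N+1},\overline{\Theta}_{2-\mu}]$, so it needs the $H_2$-relation. Your plan to derive the commutativity of $H_{i,1}$ with all $\Theta$-modes from the base relations by a Jacobi computation is therefore circular at its very first step. The paper breaks the circle by imposing $[H_{i,1},\Theta_{i,2-\mu_i}]=0$ (an instance of $\boldsymbol\Theta_i(z)\boldsymbol\Theta_i(w)=\boldsymbol\Theta_i(w)\boldsymbol\Theta_i(z)$) and $[H_{i,1},H_{j,1}]=0$ as defining relations. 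Your list contains no imaginary--imaginary relation at all.

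The Serre step has the same defect. The $\imath$Drinfeld--Serre relation has a non-vanishing right-hand side, so there are no ``lower-degree terms already known to vanish''. One must show that $S_{L,i,j}$ and $S_{R,i,j}$ satisfy the same three-term recursions (Lemmas~\ref{lem: Serre prekey lem} and~\ref{lem: Serre key lem}). The nondegeneracy argument with $H_{i,m}$ and $H_{j,m}$ (Corollary~\ref{cor rel2}) again only relates $S_{i,j}(r,r;t+m)$ to $S_{i,j}(r,r;t-m)$, and $S_{i,j}(r+m,r;t)$ to $S_{i,j}(r-m,r;t)$. A single lowest Serre relation therefore does not start the induction; the paper imposes $S_{i,j}(0,0;0)$, $S_{i,j}(0,0;-1)$, $S_{i,j}(-1,-1;0)$ and $S_{i,j}(-1,-1;-1)$.

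Finally, your fallback via shift homomorphisms cannot prove relations in the abstractly presented $\widehat{U}_\mu$. Detecting a relation in the image of a map out of $\widehat{U}_\mu$ says nothing without injectivity, which is precisely what is unknown at that stage.
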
 

Theorem \ref{main A} appears in the text as Theorem \ref{thm: 1Lev} (rank one case) and Theorem \ref{thm: Lev presentation gen} (general case). The precise list of relations can be found in \S \ref{ss: Lev relations 1 rank} (rank one case) and \S \ref{subsec: gen Lev presen}. 
Some higher rank redundancies among the relations are established in Theorem \ref{thm: redundancies in high rank}. 
The following Theorem is a key application of Theorem \ref{main A}. 

\begin{maintheorem} \label{main B} 
In type $\mathsf{A}$, if $\mu$ is anti-dominant, then there exists an explicit homomorphism 
\[
\iota_\mu \colon \ \uig \to \mathbf{U}_\mu
\]
from a central reduction $\uig$ of $\Ui_\mu$ to the corresponding shifted quantum affine algebra $\mathbf{U}_\mu$  with the same shift. 
\end{maintheorem}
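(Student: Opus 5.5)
The strategy is to use Theorem \ref{main A}, so that we never have to check the full loop presentation. By Theorem \ref{thm: Lev presentation gen}, $\Ui_\mu$ for anti-dominant $\mu$ is generated by the finitely many modes $A_{i,r}$ with $0\le r\le -\mu_i-1$, together with the low imaginary modes and central elements that appear in \S\ref{subsec: gen Lev presen}. It is subject only to the finite list of relations recorded there. The central reduction $\uig$ is obtained by specializing the central elements $\mathbb{K}_i$ (or their analogues) to scalars determined by $\boldsymbol\gamma$. So to define $\iota_\mu$ it suffices to give images of these finitely many generators in $\mathbf{U}_\mu$ and to verify the finitely many Levendorskii relations.

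Step one: choose the images. Guided by the unshifted embedding of the $\imath$quantum group into $U_q(\widehat{\g})$, where $B_i = F_i + \varsigma_i E_i K_i^{-1}$, I would set
\[
A_{i,r}\mapsto x^-_{i,r} + c_i\, x^+_{i,\,r'}\,(\phi^-_{i,0})^{-1}\cdots
\]
with $r'$ an index reflected by the shift (for example $r'=-\mu_i-1-r$), and with an appropriate Cartan factor and constant chosen so that the $q$-degrees match. The images of the imaginary generators $\Theta_{i,m}$ for small $m$ are then forced by the relation expressing $[A_{i,k},A_{i,l}]$ in terms of $\Theta$. I would take these relations as definitions and compute them through the commutator $[x^+_{i,k},x^-_{i,l}]=\phi^+_{i,k+l}-\phi^-_{i,k+l}$ in $\mathbf{U}_\mu$. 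In type $\mathsf A$, for the anti-dominant shifts relevant here, the surviving Cartan modes are few, so these images are explicit.

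Step two: check the relations, grouped by type.
\begin{enumerate}
\item \emph{Rank-one relations} (\S\ref{ss: Lev relations 1 rank}). For a single node these reduce to identities in shifted $U_q(L\mathfrak{sl}_2)$. They follow from the $x^\pm x^\pm$ quadratic relations and the $\phi$--$x$ relations, with the cross terms $x^-x^+$ cancelling against $\phi$ terms.
\item \emph{Relations for adjacent nodes} $i\sim j$ (the $A_iA_j$ quadratic relations and the real--imaginary ones). Here one uses that $x^+_i$ and $x^-_j$ commute for $i\neq j$. The remaining terms pair up as the $x^-x^-$ and $x^+x^+$ relations, twisted by Cartan factors whose commutation constants must agree. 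This matching is what fixes the relative constants $c_i$.
\item \emph{The finite Serre relations.} These should follow from the Drinfeld--Serre relations in $\mathbf{U}_\mu$. I would invoke the redundancy results of Theorem \ref{thm: redundancies in high rank}, so that only the minimal Serre relation for each adjacent pair needs checking.
\end{enumerate}

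I expect the main obstacle to be the Serre-type relation. In the $\imath$ setting it carries lower-order inhomogeneous terms involving $\Theta$, or $\mathbb{K}$ via $\boldsymbol\gamma$. Expanding the image produces mixed words in $x^+$, $x^-$ and $\phi^{\pm}$, and the mixed terms must collapse exactly to these correction terms. My plan is to work with the lowest modes and use the grading by the root lattice, which separates the pure $x^-$ part, the pure $x^+$ part and the mixed parts. For the mixed part I would move $x^+$ past $x^-$ using the commutator, which creates $\phi$'s. I would then use the type $\mathsf A$ simply-laced structure, which means only rank-two $\mathfrak{sl}_3$ computations are needed, to match the result with the inhomogeneous term.
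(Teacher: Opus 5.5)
\textbf{There is a genuine gap: the proposed images cannot define a homomorphism, and the plan has no handle on the hardest relation.}

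\emph{The images in rank one.} Putting an $x^+$-correction on \emph{every} real generator does not fit the relations. (Note also that the Levendorskii generators are $A_{i,r}$, $\mu_i-1\le r\le 0$, together with $H_{i,1}$, and that $(\phi^-_{i,0})^{-1}$ does not exist when $\mu_i<0$.) Most relations \eqref{eq: Urel3} have zero right-hand side. A correction on an interior generator creates cross terms of two kinds. Some are Cartan modes $\rho(\phi^+_{i,a+b}-\phi^-_{i,a+b})$, which are nonzero unless $\mu_i<a+b<0$. Others do not even reduce to the Cartan part, e.g.\ $[x^-_{i,b},x^+_{i,a}K_{i,+}^{-1}]_{q^2}=(x^-_{i,b}x^+_{i,a}-q^4x^+_{i,a}x^-_{i,b})K_{i,+}^{-1}$.

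The paper's assignment is $A_{i,r}\mapsto x^-_{i,\mu_i-r}$ for $\mu_i\le r\le -1$, with corrections only on the two boundary generators: a multiple of $x^+_{i,0}K_{i,+}^{-1}$ on $A_{i,0}$, and of $x^+_{i,-1}K_{i,-}^{-1}$ on $A_{i,\mu_i-1}$. The choice of $K_{i,+}$ versus $K_{i,-}$ is dictated by the side of the $q^2$-commutator on which each generator enters. With it the cross terms collapse to $\phi^+_{i,0}$ and $\phi^-_{i,\mu_i}$ and give exactly the scalars at $|r-s|=-\mu_i-1$.

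\emph{Higher rank: no choice of constants works.} Use the root-lattice grading, $\deg x^\pm_{k,r}=\pm\alpha_k$, with Cartan elements in degree $0$.
\begin{itemize}
\item With node-local images, $\iota_\mu(H_{i,1})$ (which is forced by $R_{i,i}(\mu_i-1,-1)$) is supported in degrees $\Z\alpha_i$.
\item Take $j$ adjacent to $i$ with $\mu_j\le -1$, and \eqref{eq: Urel1} with $r=\mu_j$. The left-hand side $[\iota_\mu(H_{i,1}),x^-_{j,0}]$ lives in degrees $\Z\alpha_i-\alpha_j$, so it has no component of degree $+\alpha_j$. This is exactly because $[x^+_{i,a},x^-_{j,b}]=0$, the fact you intended to use.
\item The right-hand side $-\iota_\mu(A_{j,\mu_j+1}-CA_{j,\mu_j-1})$ has a nonzero degree-$+\alpha_j$ component containing $x^+_{j,-1}K_{j,-}^{-1}$.
\item That correction cannot be dropped: it alone produces the nonzero scalar in $R_{j,j}(\mu_j-1,-2)$.
\end{itemize}

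\emph{The missing idea.} The image of $A_{i,\mu_i-1}$ needs the correction term $Q_i=\sum_{a\le i\le b,\ a<b}\tau_{ab}L_{a,i}R_{i,b}E_{a,b}K_{i,-}^{-1}$. It is built from $x^+$'s of the neighbouring nodes and read off from the root vectors in Kolb's unshifted embedding. It supplies precisely the missing degrees. Its $[a,b]=[i,i+1]$ summand has degree $\alpha_i+2\alpha_{i+1}$. So the term of $\iota_\mu(H_{i,1})$ proportional to $[x^-_{i,\mu_i},Q_i]_{q^2}$ has degree $2\alpha_{i+1}$, and its commutator with $x^-_{i+1,0}$ lands in degree $\alpha_{i+1}$.

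\emph{The verification plan.} The $\imath$Serre relations are not the bottleneck. The hard relation is \eqref{eq: sprel1}, $[H_{i,1},\Theta_{i,2-\mu_i}]=0$, which is of high degree in the generators. In rank one the paper checks the other Levendorskii relations by hand, as you propose, but verifies this one by computer. Also, Theorem \ref{thm: redundancies in high rank} does not reduce the $\imath$Serre relations; it removes the Group 2 relations \eqref{eq: sprel3}--\eqref{eq: sprel4}.

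In higher rank the paper avoids relation-checking altogether:
\begin{enumerate}
\item Take Kolb's embedding $\iota_0$ with parameters $\gamma_i(t)=tC^{\mu_i}\gamma_i$ (for $\mu_i<0$).
\item Compose it with the shift homomorphism $\sigma_t\colon\mathbf{U}_0\otimes\mathcal{R}_{\mathrm{loc}}\to\mathbf{U}_\mu\otimes\mathcal{R}_{\mathrm{loc}}$.
\item Using Lemma \ref{lem: basic rels lemma}, show $\sigma_t\iota_0(A^0_{i,r})=B_{i,r+\mu_i}+tB_{i,r}$. Hence all real and imaginary images are polynomial in $t$.
\item Specialize at $t=0$. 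The series $z^{-\mu_i}B_i(z)$ and $C^{\mu_i}S_i(z)$ satisfy the unshifted loop relations. After rescaling by Laurent monomials these are the shifted relations with the correct leading terms.
\end{enumerate}
This transports every relation, including \eqref{eq: sprel1} and the $\imath$Serre relations, from the unshifted case.
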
 

Theorem \ref{main B} appears in the text as Theorem \ref{thm: homo rank 1} (rank one case) and Theorem \ref{thm: homo rank 2} (general case). In rank one, the formulae describing the homomorphism have an especially elegant form closely resembling the usual embedding of the $q$-Onsager algebra into quantum affine $\mathfrak{sl}_2$, with the boundary Levendorskii generators $A_0, A_{\mu-1}$ analogous to the standard generators $B_0, B_1$ of the $q$-Onsager algebra. In higher rank, the images of the lower boundary generators $A_{i, \mu_i-1}$ need to be modified by rather cumbersome correction terms, somewhat resembling the correction terms appearing in the higher rank comultiplication formulae in \cite{FT19}. 

\begin{maintheorem} \label{main C} 
The homomorphisms $\iota_\mu$ are injective and satisfy the following ``graded coideal'' property: 
\[
\Delta_{\mu_1, \mu_2} \circ \iota_\mu ( \uig ) \subseteq \Uq_{\mu_1} \otimes \mathbf{U}^\imath_{\mu_2,\boldsymbol\gamma}, 
\]
for any $\mu_1, \mu_2$ anti-dominant with $\mu_1 + \mu_2 = \mu$. 
\end{maintheorem}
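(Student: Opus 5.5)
The plan has two parts, injectivity and the coideal property, and both rely on the finite presentation of Theorem \ref{main A}. Theorem \ref{main B} gives $\iota_\mu$ explicitly on the Levendorskii generators $A_{i,0},\dots,A_{i,\mu_i-1}$. Since these generate $\uig$, it suffices to control $\Delta_{\mu_1,\mu_2}\circ\iota_\mu$ on them.

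\emph{Coideal property.} I would proceed in three steps.
\begin{enumerate}
\item Recall from \cite{FT19} the formulae for $\Delta_{\mu_1,\mu_2}$ on the Levendorskii-type generators of $\mathbf{U}_\mu$. These generators are the modes $e_{i,r}, f_{i,r}$ in a bounded range and the extremal Cartan elements $\phi^\pm$. In the anti-dominant case the coproduct is given on the boundary modes by formulae such as $\Delta(e_{i,-1}) = e_{i,-1}\otimes 1 + \psi\otimes e_{i,-1}$ type, plus explicit correction terms in higher rank.
\item Substitute the explicit images $\iota_\mu(A_{i,r})$. These are linear combinations of $f$-modes, $e$-modes times Cartan factors, and, for $r=\mu_i-1$, correction terms. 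Apply $\Delta_{\mu_1,\mu_2}$ termwise.
\item Regroup the result as $\sum X\otimes \iota_{\mu_2}(Y)$ with $Y$ among the Levendorskii generators of $\mathbf{U}^\imath_{\mu_2,\boldsymbol\gamma}$, their products, and central elements. The decisive check is that the second tensor factors assemble exactly into $\iota_{\mu_2}(A_{i,s})$ for the appropriate range $0\le s\le \mu_{2,i}-1$. I expect the interior modes $A_{i,r}$ with $r$ outside that range to produce terms $\text{(stuff)}\otimes 1$, or terms with a Cartan factor in the first slot.
\end{enumerate}
Because $\Delta_{\mu_1,\mu_2}$ is an algebra homomorphism and $\Uq_{\mu_1}\otimes\mathbf{U}^\imath_{\mu_2,\boldsymbol\gamma}$ (identified via $\iota_{\mu_2}$) is a subalgebra, the inclusion on generators extends to all of $\uig$. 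In rank one this is a direct computation mirroring the $q$-Onsager case: $\Delta(B_0)\in U\otimes B$. In higher rank the main obstacle is the correction terms in both $\iota_\mu(A_{i,\mu_i-1})$ and in the FT coproduct. I would handle them by first reducing to $\mu_1$ or $\mu_2$ minimal, i.e.\ a single anti-dominant fundamental coweight. I would then use coassociativity, $(\Delta_{\mu_1,\mu_2}\otimes 1)\Delta_{\mu_1+\mu_2,\mu_3} = (1\otimes\Delta_{\mu_2,\mu_3})\Delta_{\mu_1,\mu_2+\mu_3}$, to iterate. In this way only the coproduct with one small shift has to be expanded by hand.

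\emph{Injectivity.} I would use a PBW/degeneration argument. First, equip $\uig$ with the filtration by loop degree, and $\mathbf{U}_\mu$ with the compatible one, so that $\iota_\mu$ is filtered. Next, using Theorem \ref{main A} together with the loop relations, obtain a spanning set of ordered monomials in the root vectors $A_{\beta,r}$ and in $\Theta_{i,r}$. Then show that the leading terms of $\iota_\mu$ of these monomials are the corresponding ordered monomials in the negative-half root vectors $f_{\beta,r}$ times Cartan monomials. These are linearly independent by the PBW theorem for shifted quantum affine algebras in \cite{FT19}. This proves both that the spanning set is a basis and that $\iota_\mu$ is injective.

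An alternative, if the PBW leading-term identification is delicate, would be to induct on $\mu$ via the coideal maps themselves. The composite $(\epsilon\otimes 1)\circ\Delta_{\mu_1,\mu_2}\circ\iota_\mu$ relates $\iota_\mu$ to shift homomorphisms and to $\iota_{\mu_2}$. This would reduce injectivity to that of shift maps and of the base case $\mu$ minimal, where GKLO representations \cite{LP-gklo} can separate elements.
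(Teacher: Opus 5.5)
Your architecture matches the paper's in both halves: an associated-graded argument against the triangular decomposition of $\mathbf U_\mu$ for injectivity, and a generator-by-generator check for the coideal property. But in each half the step that carries the real content is missing, and the devices you propose in its place do not work.

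\textbf{Injectivity.} The filtration has to be the one by $x^\pm$-degree, not a loop (mode) degree. That is, $\deg x^-_{i,r}=1$, $\deg x^+_{i,r}=-1$, Cartan elements in degree $0$, and on $\uig$, $\deg A_{i,r}=1$, $\deg H_{i,m}=0$. Only this makes the $x^+$-terms and the correction term $Q_i$ in $\iota_\mu(A_{i,r})$ subleading, so that the leading term is $x^-_{i,\mu_i-r}$.

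More importantly, you simply assert that the leading terms of the images of the imaginary generators are algebraically independent Cartan elements. This is the crux. It is needed both for $\iota_\mu$ to be filtered at all and for the linear independence of leading terms. It cannot be obtained by direct computation, since $\Theta_{i,r}$ is defined recursively through $R_{L,i,i}$ and its image involves $Q_i$. The paper obtains it indirectly:
\begin{itemize}
\item $\iota_\mu(H_{i,1})\equiv Ch_{i,1}-h_{i,-1}$ is computed explicitly.
\item $\iota_\mu(H_{i,m})$ commutes with all $\iota_\mu(H_{j,1})$.
\item Lemma \ref{lem:grinj-centralizers} gives $\bigcap_j Z_{\mathbf U_\mu}(D_{j,1})=\mathbf U^0_\mu$ and $\mathbf U^0_\mu\cap Z_{\mathbf U_\mu}(\mathbf U^-_\mu)=\mathcal{Z}$, via the triangular decomposition and the shuffle realization.
\item Combined with the degree-one part of the real--imaginary relations, this forces $\iota_\mu(H_{i,m})\equiv C^mh_{i,m}-h_{i,-m}+z_{i,m}$ with $z_{i,m}$ central.
\end{itemize}
Once this is in hand, you do not need a PBW spanning set in root vectors $A_{\beta,r}$. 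The algebra $\operatorname{gr}\uig$ is a quotient of $\mathbf U^-_\mu\rtimes\C(q)[t_{i,m}]$, and the composite with $\operatorname{gr}\iota_\mu$ is visibly injective.

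Your fallback via $(\epsilon\otimes 1)\circ\Delta_{\mu_1,\mu_2}$ fails. For $\mu_1\neq 0$ the algebra $\Uq_{\mu_1}$ has no counit: a character must kill all $x^\pm_{i,r}$, and then $[x^+_{i,0},x^-_{i,0}]$ forces $\epsilon(K_{i,+})=\epsilon(\phi^-_{i,0})=0$ whenever $\mu_{1,i}<0$. Faithfulness of GKLO representations is also not available.

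\textbf{Coideal property.} The coproduct of \cite{FT19} is explicit on $x^-_{i,s}$ only for $\mu_{1,i}\le s\le 0$. But $\iota_\mu(A_{i,0})$ contains $x^-_{i,\mu_i}$, which lies below this range as soon as $\mu_{2,i}<0$; so do the interior modes once $\mu_{2,i}\le -2$. Their coproducts must be produced as in Lemma \ref{lem:recurrence}. There one commutes with $\Delta(h^\vee_{i,-1})$, where $h^\vee_{i,-1}$ is the combination of the $h_{j,-1}$ with $[h^\vee_{i,-1},x^-_{j,r}]=\delta_{ij}x^-_{i,r-1}$. Its coproduct is primitive modulo $\Uq_{\mu_1}\otimes\langle x^-_{j,0}\rangle$. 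This yields
\[
\Delta(x^-_{i,\mu_{1,i}-k})=x^-_{i,\mu_{1,i}-k}\otimes 1+K_i^{-1}\otimes x^-_{i,-k}+T_{i,k}.
\]
The second factors of the remainders $T_{i,k}$ are polynomials in the $x^-_{j,0}$ and in $x^-_{i,s}$ for $-k<s\le 0$. These are not terms of the form ``(stuff)$\otimes 1$'' or ``Cartan$\otimes$generator'' as you anticipate, and it is exactly these second factors that must be shown to lie in $\iota_{\mu_2}(\mathbf U^\imath_{\mu_2})$.

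Coassociativity does not remove this work. Taking $\mu_2$ minimal would make these coproducts nearly explicit. But passing back from $\lambda$ minimal to $\mu_2=\nu+\lambda$ requires the converse statement that $\Delta_{\nu,\lambda}(y)\in\Uq_\nu\otimes\mathbf U^\imath_\lambda$ forces $y\in\mathbf U^\imath_{\mu_2}$, which you do not have. Reducing $\mu_1$ instead does iterate, granting coassociativity and injectivity of $\Delta$, but it leaves $\mu_2$ arbitrary, so the same computation remains.

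Finally, when you regroup, the first-factor Cartan elements coming from $\Delta(x^-_{i,\mu_i})$ and from $\Delta(x^+_{i,0}K_{i,+}^{-1})$ are $K_{i,-}$ and $K_{i,+}^{-1}$ respectively. They combine into a single term $K_i^{-1}\otimes\iota_{\mu_2}(A_{i,0})$ only in $\bar{\mathbf U}_{\mu_1}$, and only with the rescaled parameter $\gamma_i'=C^{\mu_{1,i}}\gamma_i$. This is why Theorem \ref{thm: coideal property 2} is stated in that form.
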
 

Theorem \ref{main C} appears in the text as Theorem \ref{thm: coideal property 1} (rank one case) and Theorems \ref{thm: injectivity} and \ref{thm: coideal property 2} (general case). 

\subsection{Techniques} 

We split the proof of Theorem \ref{main A} into two steps. We first tackle the rank one case, i.e., the shifted $q$-Onsager algebra, in \S \ref{sec: Levendorskii proof}. Subsequently, we derive all the mixed vertex relations in \S \ref{sec: gen sl split c}.   
The general architecture of our proof resembles, on the surface, the corresponding ``simultaneous induction'' argument for shifted quantum affine algebras due to Finkelberg and Tsymbaliuk \cite{FT19}. 
Nevertheless, our proof differs from that in \emph{op.\ cit.} in several key regards and relies on genuinely new techniques.  

Firstly, the derivation of the quadratic  relations between real and imaginary series is much more challenging in our setting. We follow a completely different approach than in \emph{op.\ cit.} by first establishing a certain identity in a free non-commutative algebra, which reduces the problem to checking that a small number of relations between the real generators holds. 

The second complication concerns the base step of the induction aiming to establish the quadratic real--real same vertex relations $R_{i,i}(r,s)$ (see \eqref{eq: 1grel3}). 
This problem does not occur at all in the setting of shifted quantum affine algebras, but requires a very careful treatment in our situation. The issue is caused, roughly speaking, by the behaviour of the first non-trivial imaginary mode $H_{i,1}$ (in particular, the relation $[H_{i,1}, A_{i,r}] = [2](A_{i,r+1} - A_{i, r-1})$), and the fact that the RHS of $R_{i,i}(r,s)$ depends only on $|r-s|$. This forces us to provide an independent argument establishing $R_{i,i}(r,r)$ for all $r \in \Z$ to initiate the induction.

The third major difficulty concerns the derivation of the $\imath$Drinfeld--Serre relations \eqref{eq: drel6}.  
The source of the problem is that, unlike for quantum affine algebras, the RHS of the these relations does not vanish.  
We first establish some general identities without using any $\imath$Serre relations, and then use these to set up an induction, where the base case presents a similar additional challenge to the quadratic relations and again needs to be treated with special care. By comparison, the derivation of the Drinfeld--Serre relations for shifted quantum affine algebras relies on a simple non-degeneracy argument and is much more straightforward. 

The flow of the induction and the logical dependencies between the various hypotheses are overall quite complicated. We have endeavoured to clarify these dependencies with multiple figures throughout the manuscript 
and taken great care to make all assumptions used at each step as explicit as possible. A basic schematic explanation of the induction in rank one is presented in Figure \ref{fig: induction}. 

The proof of Theorem \ref{main B} in rank one is based on an explicit calculation with the Levendorskii generators. In higher rank, where the formulae describing the homomorphism are more complicated due to correction terms, we use a more abstract argument which reduces the problem to the unshifted case via shift homomorphisms and specialization maps. The correction terms were determined by making precise the approximations from \cite{Przez-23}. 
The proof of Theorem \ref{main C} relies on an associated graded argument, together with some auxiliary information provided by the shuffle realization of the negative and positive halves of $\mathbf{U}_\mu$. As a corollary, we also obtain a PBW theorem for $\uig$ -- see Corollary \ref{cor: PBW}. 

\subsection{Perspectives} 

Our work raises a number of broader conceptual questions. 
There is a certain tension in the theory of quantum symmetric pairs or $\imath$quantum groups, reflected in the terminology itself, between viewing them as intrinsic algebraic objects and subalgebras of ordinary quantum groups.  
The discovery of a Drinfeld--style presentation for quantum affine symmetric pair algebras is connected to numerous  developments which support the ``intrinsic'' point of view. In particular, \emph{shifted} analogues of quantum affine symmetric pair algebras could be defined without any reference to a background shifted quantum group. 
However, the results of the present paper show that the full family of anti-dominantly shifted quantum affine symmetric pair algebras can indeed be seen as a graded coideal subalgebra inside the corresponding family of shifted quantum affine algebras $\Uq_\mu$, establishing the two as a genuine \emph{pair} $(\bigoplus_\mu \mathbf{U}^\imath_\mu, \ \bigoplus_\mu \Uq_\mu)$. 
  
It is rather interesting that this result has emerged in the reverse order to the unshifted case. Namely, quantum affine symmetric pair algebras were first constructed as coideal subalgebras, then described intrinsically via a Serre--style presentation, which was later shown to be equivalent to a newly constructed Drinfeld--style presentation. In contrast,  shifted analogues were first defined via a loop presentation, and the Levendorskii--style presentation established here has provided the bridge needed to construct embeddings into shifted quantum groups and the associated coideal structures. 
This sequence is perhaps quite natural given that the correction terms in our embedding formulae are rather complicated. 

Overall, we find it quite remarkable that the ``intrinsic'' and ``embedded'' viewpoints coincide in the shifted case. 
Accordingly, it would be interesting to consider whether the Kolb--Letzter theory of quantum symmetric pairs could be extended more broadly to the shifted setting, perhaps pointing towards a systematic theory of compatible quantizations of fixed-point subalgebras in shifted loop algebras.

We also expect that our results should have an application to the theory of boundary $q$-characters, developed in \cite{Przez-23, LP, LP-LMP, LP-quasi}. The construction of $q$-characters therein relies on certain properties of the coproduct, namely, the fact that the series $\bTh_i(z)$ exhibit group-like behaviour modulo $\mathbf{U}^\imath \otimes \Uq_+$. This was established using rather tedious computations comparing Lusztig's braid group action with the intrinsic braid symmetries of $\Ui$. It is likely that these can be simplified via the Levendorskii presentation, following the approach of \cite{min-1, min-2} in the case of Yangians. We also expect that the construction of boundary $q$-characters can be generalized to the shifted case.  

Furthermore, our results suggest that anti-dominantly shifted quantum affine symmetric pair algebras should admit a reflection equation presentation, in analogy to the RTT presentation of shifted quantum affine algebras obtained in \cite{FT19, FPT-lax1, FPT-lax2}. It is likely that the rather complicated formulae for the embeddings $\iota_\mu$ will emerge more naturally in this framework.

Finally, we note that there is another approach to coproducts for shifted quantum affine algebras, generalizing Drinfeld's ``formal'' or ``topological'' coproduct \cite{hernandez-05, hernandez-07}. This approach is much more straightforward and does not rely on the Levendorskii presentation. A similar approach to unshifted quantum affine symmetric pair algebras, based partly on $\imath$Hopf algebra techniques, has recently been developed in  \cite{ALPV, LuPan}. 
We expect that this approach can be generalized to the shifted case, and the associated coideal structure should be related to ours by a Drinfeld twist.

\addtocontents{toc}{\SkipTocEntry}
	
\section*{Acknowledgements} 

We would like to thank Andrea Appel, Pascal Baseilhac, Istvan Heckenberger, Stefan Kolb, Duncan Laurie and Bart Vlaar for helpful discussions and suggestions.  
We are also grateful to the Erwin Schr\"{o}dinger Institute (ESI) in Vienna, where parts of this project were carried out,  for its hospitality and support.


\section{Shifted $q$-Onsager algebra} 
\label{s: qOns}

The base field is $\C(q)$ throughout the paper. Alternatively, we may work over $\C$ with $q$ specialized to an element which is not a root of unity. 
We will use the following standard notation for quantum integers, factorials, binomial coefficients and brackets: 
\[
[n] = \frac{q^n - q^{-n}}{q-q^{-1}}, \qquad
[n]! = \prod_{k=1}^{n}[k], \qquad 
\begin{bmatrix}m\\k\end{bmatrix}
=\frac{[n]!}{[k]![n-k]!}, \qquad 
 [x,y]_q = xy - q yx. 
\]

\subsection{Shifted $q$-Onsager algebra} 

Let $\mu \in \Z$. 
We will use the following notation for generating series: 
\[
\As{}{z} = \sum_{r \in \Z} A_{r} z^r, \qquad \Ths{}{z} = \KK (q-q\mi)\mi z^{-\mu} +  \sum_{r > -\mu} \Theta_{r} z^r, \qquad \Dels = \sum_{r \in \Z} \Ck^r z^r. 
\]
By convention, $\Theta_{-\mu} = \KK(q-q\mi)\mi$ and $\Theta_{r} = 0$ for $r < -\mu$. We will refer to $\As{}{z}$ and $\Ths{}{z}$ as the \emph{real} and \emph{imaginary} series, respectively. We will also refer to their modes in the same fashion. 

\Defi \label{defi: shifted qOns}
Let $\Ui_\mu$ be the algebra generated by $A_{s}$ and 
$\Theta_{r}$, as well as central elements 
$\KK^{\pm1}$, $\Ck^{\pm1}$ $(r > -\mu, \ s\in\Z)$, subject to the following relations: 
\begin{align}
\bTh(z) \bTh(w) &= \bTh(w) \bTh(z), \label{eq: 1drel1} \\[2pt] 
\bTh (z)  \bA(w) &= \frac{(1 -q^{-2}zw^{-1}) (1 -q^{2} zw \CCC)}{(1 -q^{2}zw^{-1})(1 -q^{-2}zw \CCC)} \bA(w) \bTh (z), \label{eq: 1drel2} \\[2pt]
\Sym_{z,w} (q^2z - w) \bA(z) \bA(w) &= 
(w-z) \bDel(zw) \big(\bTh(z) - \bTh(w) \big).  
 \label{eq: 1drel3}
\end{align} 
\enDefi 

The algebra $\Ui_\mu$ is a special case of shifted quantum symmetric pair algebras introduced in \cite{LP-gklo}, and shifted $\imath$quantum groups considered in \cite{LWW-i}, in the context of GKLO representations. Here we will refer to $\Ui_\mu$ as the \emph{$\mu$-shifted $q$-Onsager algebra}. 

\nc{\btheta}{\overline{\Theta}} 

We will now rewrite the defining relations term-by-term in several ways. 
Let 
\[
(q-q\mi)  \Ths{}{z} z^{\mu} = \KK \exp \left( (q-q^{-1}) \sum_{r \geq 1}^\infty \Hg_{r} z^r \right) 
\] 
and 
\begin{align} 
Y(m,s) &:= 
[\Theta_{m+1}, A_{s}] + [\Theta_{m-1}, A_{s}] \Ck + q^{-2}\left( [A_{s+1}, \Theta_{m}]_{q^{4}} -  [\Theta_{m}, A_{s-1}]_{q^{4}}\Ck \right) = 0, \\
\overline{Y}(m,s) &= Y(m,s) - \delta_{m\neq2}\CCC Y(m{-}2,s). 
\end{align}

\Lem \label{lem: Y-H equiv}
The following are equivalent:
\be
\item $Y(m-\mu,s) = 0$ for all $0 \leq m \leq N$ and $s \in \Z$, 
\item $\overline{Y}(m-\mu,s) = 0$ for all $0 \leq m \leq N$ and $s \in \Z$, 
\item $[\Hg_{m+1}, \Ag_{s}] = \textstyle \frac{[2m]}{m} (\Ag_{s+m}- \Ag_{s-m}\CCC^m)$ for all $0 \leq m \leq N$ and $s \in \Z$.  
\ee
\enlem

\Proof
See, e.g., \cite[Proposition 2.8]{lu-wang-21}. 
\enproof

It follows from Lemma \ref{lem: Y-H equiv} that the relations from Definition \ref{defi: shifted qOns} may be rewritten as: 
\begin{align} 
P(m,n): &\qquad& [\Theta_{m},\Theta_{n}] &=0, \label{eq: 1grel1} \\
Q(m,r): &\qquad& [\Hg_{m}, \Ag_{r}] &= \textstyle \frac{[2m]}{m} (\Ag_{r+m}- \Ag_{r-m}\CCC^m), \label{eq: 1grel2} \\ 
\label{eq: 1grel3}
R(r,s): &\qquad& [\Ag_{s+1}, \Ag_{r}]_{q^{2}} + [\Ag_{r+1}, \Ag_{s}]_{q^{2}}
&= -\CCC^r\Thg_{s-r+1} + \CCC^{r+1}\Thg_{s-r-1}  \\ \notag
&\qquad& &\quad  - \CCC^s\Thg_{r-s+1} + \CCC^{s+1}\Thg_{r-s-1}. 
\end{align} 
Observe that relation \eqref{eq: 1grel3} is symmetric in $r$ and $s$, so we may assume $s \geq r$. Denote the LHS of \eqref{eq: 1grel3} as $R_L(r,s)$, and abbreviate 
\[ 
\btheta_{m} = \Theta_{m} - \delta_{m \neq 2} \CCC \Theta_{m-2}. 
\] 
If $\mu \leq 0$ then \eqref{eq: 1grel3} can be rephrased as
\[
R_L(r,s) = - 2^{\delta_{r,s}} \Ck^{r} \btheta_{s-r+1}. 
\] 
Moreover, 
\begin{align} 
\overline{Y}(m,s) &= 
[\btheta_{m+1}, A_{s}] + [\btheta_{m-1}, A_{s}] \Ck + q^{-2}\left( [A_{s+1}, \btheta_{m}]_{q^{4}} -  [\btheta_{m}, A_{s-1}]_{q^{4}}\Ck \right) \\
&= [\Theta_{m+1}, A_{s}] - [\Theta_{m-3}, A_{s}] \Ck^2 + q^{-2}\left( [A_{s+1}, \btheta_{m}]_{q^{4}} -  [\btheta_{m}, A_{s-1}]_{q^{4}}\Ck \right). 
\end{align} 

\subsection{Levendorskii presentation - shifted case} 
\label{ss: Lev relations 1 rank}

Given a non-zero \emph{anti-dominant} shift $\mu \leq -1$, let $\Vi_\mu$ be the algebra generated by 
\[
A_{r} \ ( \mu-1 \leq r \leq 0 ), \quad H_{1}, \quad \KK^{\pm1}, \quad \Ck^{\pm1}, 
\] 
(which we call `Levendorskii generators'), with $\KK^{\pm1}, \Ck^{\pm1}$ central, 
subject to the relations
\begin{align} 
 [\Hg_{1}, \Ag_{r}] &= \textstyle [2] (\Ag_{r+1}- \Ag_{r-1}\CCC), \label{eq: 1Urel1} \\  \label{eq:  1Urel2}
 [\Ag_{s+1}, A_{r}]_{q^{2}} + [\Ag_{r+1}, \Ag_{s}]_{q^{2}} 
&= \left\{ \begin{array}{l l}
- \KK \Ck^{\mu-1} H_{1} & \mbox{if } |r{-}s| = -\mu, \\[2pt] 
- 2^{\delta_{\mu,-1}} \rho \KK \Ck^{\min(r,s)} & \mbox{if } |r{-}s| = {-}\mu{-}1 , \\[2pt] 
0 & \mbox{otherwise}, 
\end{array} \right.
\end{align} 
for any indices such that these relations make sense, 
and
\eq \label{eq: 1HHlev} 
[H_1, \Theta_{2-\mu}] = 0, 
\eneq
with $\Theta_{2-\mu}$ defined via \eqref{eq: J(s)} below and 
\[
\rho = (q-q\mi)\mi, \qquad \bH_1 = [2]\mi H_1. 
\]
In the sequel we will call the above relations the `Levendorskii relations' or the `defining relations'. 

\Rem \label{rem: bound gen}
The equality $|r-s| = -\mu$ can only hold (up to switching $r \leftrightarrow s$) if  $r=-1$ and $s = \mu - 1$. On the other hand, $|r-s| = -\mu -1$ can only hold (up to switching $r \leftrightarrow s$) if (a) $r=-1$ and $s = \mu$ or (b) $r=-2$ and $s = \mu - 1$. Also note that $H_1$ can be defined in terms of the other generators and has only been added as a generator for convenience. In fact, going further, the entire presentation could be expressed in terms of the ``boundary'' generators $A_0, A_{-1}, A_{\mu}, A_{\mu-1}$ alone. 
\enrem


Inductively define 
\begin{align} \label{eq: I(r)}
I(r): &\qquad& 
A_{r} &= 
 \left\{ \begin{array}{l l}
	 [\bH_{1}, A_{r-1}] + \Ck A_{r-2} & \mbox{if } r > 0, \\[2pt]  
	\Ck\mi A_{r+2} - \Ck\mi [\bH_{1}, A_{r+1}]   & \mbox{if } r < \mu - 1, 
\end{array} \right. \\[2pt]  \label{eq: J(s)}
J(s): &\qquad& \Theta_{s} &= -\Ck^{\lceil \frac{s+1}{2} \rceil} R_L(-\lceil \textstyle\frac{s+1}{2} \rceil, \lfloor \textstyle\frac{s-3}{2} \rfloor) + \Ck \Theta_{s - 2},  
 \end{align}
for $s > 1 - \mu$. 
As before, abbreviate 
\[ 
\btheta_{s} = \Theta_{s} - \CCC \Theta_{s-2}.  
\]
Note that the definition of $\Theta_s$ is independent of $\mu$. Table \ref{table theta} shows the $R_L$ term occurring in the first few $\overline{\Theta}_s$ (if $s \leq 1 -\mu$, then this happens by virtue of the Levendorskii relations rather than definition $J(s)$). 
\begin{table}[h] 
\begin{tabular}{!{\vrule width 1.2pt}c|c|c|c|c|c|c!{\vrule width 1.2pt}}
\noalign{\hrule height 1.2pt}
$\overline{\Theta}_3$ & $\overline{\Theta}_4$ & $\overline{\Theta}_5$ & $\overline{\Theta}_6$ &
$\overline{\Theta}_7$ & $\overline{\Theta}_8$ & $\overline{\Theta}_9$ \\ \hline
$R_L(-2,0)$ & $R_L(-3,0)$ & $R_L(-3,1)$ & $R_L(-4,1)$ &
$R_L(-4,2)$ & $R_L(-5,2)$ & $R_L(-5,3)$ \\
\noalign{\hrule height 1.2pt}
\end{tabular}
\caption{Approximate form of $\overline{\Theta}_s$.}
\label{table theta}
\end{table}

\Rem \label{rem: theta} 
The choice of definition \eqref{eq: J(s)} is of course not unique. The formula chosen here is convenient from the point of view of the presentation of \S \ref{sec: Levendorskii proof}. In \S \ref{sec: sub real1}--\ref{sec: sub real2}, we prefer to work with (any) different convention such that $\overline{\Theta}_{2-\mu} = - \Ck^{1-\mu} R_L(\mu-1,0)$. The proof in \S \ref{sec: Levendorskii proof} still works for this convention and, essentially, involves shifting down the seven-square yellow ``honeycomb'' in Figures \ref{fig: yellow1}--\ref{fig: yellow2}. 
\enrem

\Thm \label{thm: 1Lev}
For $\mu \leq -1$, there is an algebra isomorphism
\[
\Vi_\mu \to \Ui_\mu
\]
sending the Levendorskii generators to the corresponding Drinfeld generators (denoted in the same way). 
\enthm

\Proof
This is an algebra homomorphism since the Levendorskii relations are a subset of the Drinfeld relations, also it is  surjective since all the generators of $\Ui_\mu$ are in its image. So it suffices to check that all the Drinfeld relations also hold in $\Vi_\mu$. This is done in section \S \ref{sec: Levendorskii proof} below. 
\enproof

\subsection{Levendorskii presentation - unshifted case} 

Recall from \cite{Ter01, lu-wang-21} that the (universal) $q$-Onsager algebra $\Oq$ is the algebra generated by $B_0, B_1$ and invertible central elements $\KK_0, \KK_1$ subject to the $q$--Dolan--Grady relations:
\eq
\label{eq: Ons rel} \tag*{DG$(i,j)$:}
\Serre(i,j) := \sum_{r=0}^3 (-1)^r \sqbin{3}{r} \Bg_i^{3-r}\Bg_j\Bg_i^r = - q\mi \KK_i [2]^2[\Bg_i, \Bg_j] 
\eneq 
for $0 \leq i \neq j \leq 1$. 
By \cite[Theorem II]{bas-kol-20} and \cite[Theorem 2.16]{lu-wang-21}, there is an algebra isomorphism
\eq \label{eq: qOns iso}
\Oq \xrightarrow{\sim} \Ui_0, \qquad B_1 \mapsto A_0, \quad B_0 \mapsto A_{-1} \Ck \KK\mi, \quad \KK_1 \mapsto \KK, \quad \KK_0 \mapsto \Ck \KK\mi. 
\eneq

Let $\mathcal{F}$ be the free non-commutative algebra generated by $B_0, B_1, \KK_0$ and $\KK_1$. Define elements $A_0, A_{-1}, \KK, \Ck \in \mathcal{F}$ using the assignments in \eqref{eq: qOns iso}. Moreover, set $H_1 = - \KK\mi \Ck [A_0, A_{-1}]_{q^2}$, $\Theta_1 = \KK H_1$ and define $A_{1}, A_{-2}, \Theta_2$ according to \eqref{eq: I(r)}--\eqref{eq: J(s)}. 

\Lem \label{lem: equiv qOns}
The following relations are equivalent in $\mathcal{F}$: 
\be
\item DG$(1,0)$ and $R(0,0)$; 
\item DG$(0,1)$ and $R(-2,-2)$. 
\ee
\enlem

\Proof
By definition, $H_1 = \gamma [A_0, A_{-1}]_{q^2}$ with $\gamma = - \KK\mi \Ck$. Let us expand both sides of $R(0,0)$:
\begin{align}
\textstyle\frac12 R_L(0,0) = [A_1, A_0]_{q^2} &= \big[ [2]\mi [H_1, A_0] + \Ck A_{-1}, A_0]_{q^2} \\ 
&= \gamma [2]\mi \Big[\big[ [A_0, A_{-1}]_{q^2}, A_0 \big], A_0 \Big]_{q^2} + \Ck [A_{-1}, A_0]_{q^2} \\
&= -q^2[2]\mi \Serre(1,0) + \KK_1 [B_0, B_1]_{q^2}, \\
\textstyle\frac12 R_R(0,0) = - \Theta_1 &= \Ck [A_0, A_{-1}]_{q^2} = \KK_1 [B_1, B_0]_{q^2}.  
\end{align} 
Rearranging the identities above yields DG$(1,0)$, giving the first equivalence. The proof of the second equivalence is analogous. 
\enproof 

Lemma \ref{lem: equiv qOns} allows us to formulate the following Levendorskii-style presentation of the unshifted algebra $\Ui_0$. 

\Pro \label{pro: no shift Lev}
The algebra $\Ui_0$ is isomorphic to the algebra generated by
\[
\{ A_1, A_0, A_{-1}, A_{-2}, H_1, \KK^{\pm1}, \Ck^{\pm1} \},
\]
with $\KK^{\pm1}, \Ck^{\pm1}$ central, subject to the relations 
\begin{align*}
 [\Hg_{1}, \Ag_{0}] &= \textstyle [2] (\Ag_{1}- \Ag_{-1}\CCC), \\
 [\Hg_{1}, \Ag_{-1}] &= \textstyle [2] (\Ag_{0}- \Ag_{-2}\CCC), \\ 
 [A_1, A_{0}]_{q^2} &= -\KK H_1, \\ 
 [A_0, A_{-1}]_{q^2} &= -\KK \Ck^{-1} H_1, \\ 
 [A_{-1}, A_{-2}]_{q^2} &= -\KK \Ck^{-2} H_1. 
\end{align*}
\enpro 

\Proof
By Lemma \ref{lem: equiv qOns}, the algebra with the above presentation is isomorphic to $\Oq$, which, by \eqref{eq: qOns iso}, is isomorphic to $\Ui$. 
\enproof

The presentation from Proposition \ref{pro: no shift Lev} is rather different from the presentation from Theorem \ref{thm: 1Lev}. Namely, it involves additional generators $A_1, A_{-2}$ and relations $R(0,0)$, $R(-2,-2)$, while an analogue of relation \eqref{eq: 1HHlev} is not required.  

\section{Proof of the Levendorskii presentation} 
\label{sec: Levendorskii proof}

The entire section is devoted to a proof of Theorem \ref{thm: 1Lev}. The general architecture of the proof resembles the corresponding proof for shifted quantum affine algebras due to Finkelberg and Tsymbaliuk \cite{FT19}. In particular, the core of the proof is formed by a simultaneous induction, set up in \S \ref{subsec: conditions}. However, our proof differs from that in \emph{op.\ cit.} in two important respects. Firstly, the derivation of the Drinfeld relations between real and imaginary series  is much more difficult than for shifted quantum affine algebras. We use a completely different approach than in \emph{op.\ cit.} by first establishing a certain identity in a free non-commutative algebra (Lemma \ref{lem: key id}), which reduces the problem to checking that certain relations between the real generators hold (Lemmata \ref{lem: H2 comm},  \ref{lem: real-mag} and \S \ref{sss: CN}). The second complication concerns the base step (\S \ref{ss: early B}) in the induction aiming to establish that a symmetrized $q^2$-commutator of real series equals an imaginary series. This problem does not occur at all in the setting of shifted quantum affine algebras, but requires a very careful treatment in our situation. 

\subsection{Preliminaries} 

The defining Levendorskii relation $Q(1,r)$ together with the inductive definition $I(r)$ of $A_r$ implies that 
\eq \label{eq: ind_H1A}
Q(1,r): \qquad [\Hg_{1}, \Ag_{r}] = \textstyle [2] (\Ag_{r+1}- \Ag_{r-1}\CCC) 
\eneq
holds for all $r \in \Z$. 
Applying \ref{eq: ind_H1A} to $R_L(r,s)$ yields 
\eq \label{eg: rule H1} 
[\overline{H}_1, R_L(r,s)] = R_L(r+1,s) + R_L(r,s+1) - \Ck R_L(r-1,s) - \Ck R_L(r,s-1). 
\eneq 
In the sequel, we will often use the following lemma. 

\Lem
\label{lem: H1 rule lemma}
Suppose that (i) $[\overline{H}_1, R_R(r,s)] = 0$, (ii) $R(r,s)$ and (iii) three of the four relations corresponding to the terms on the RHS of \eqref{eg: rule H1} hold. Then the fourth relation holds as well. 
\enlem

\Proof
Suppose we want to prove $R(r,s+1)$. We can assume without loss of generality that $s \geq r$. Since $R_L(r+1,s) - \Ck R_L(r,s-1) = 0$ by assumption, we get $R_L(r,s+1) = \Ck R_L(r-1,s) = - \Ck^r \overline{\Theta}_{s-r+2}$, which is precisely $R(r,s+1)$. 
\enproof

Moreover, define 
\[
H_2 := \KK\mi \Theta_{2-\mu} - \textstyle\frac12 \KK\mi  (q-q\mi) H_1^2, \qquad \overline{H}_2 = \textstyle\frac{2}{[4]} H_2. 
\] 

The following lemma is equivalent to ($\mathtt{C}_1$) defined below in \S \ref{subsec: conditions}. We treat it separately because it does not fit into the inductive logic of \S \ref{subsec: conditions}. In particular, the proof of $(\mathtt{B}_{\mu-1})$--$(\mathtt{B}_{\mu+1})$ depends on it, which would create a circularity. Below we present a legitimate proof of ($\mathtt{C}_1$) which does \emph{not} assume $(\mathtt{B}_{\mu-1})$--$(\mathtt{B}_{\mu+1})$. 

\Lem \label{lem: H2 comm}
We have $Y(1-\mu, s) = 0$ for all $s \in \Z$. Hence 
\eq \label{eq: H2 Ar comm}
[\overline{H}_2, A_r] = A_{r+2} - \Ck^2 A_{r-2}. 
\eneq
\enlem

\Proof
The first statement will follow from Lemma \ref{lem: real-mag}, with $m=1-\mu$, if we check that the assumptions are satisfied. Assumption $(\mathtt{A}_{1})$ (see \S \ref{subsec: conditions}) holds thanks to \ref{eq: 1HHlev}. Let $\mu \leq -3$. Then the first five relations (those not involving $n$) as well as the relations $R(x-1, n-2)$ and $R(x-2, n-2)$ are instances of the defining relations. The remaining four relation can be deduced by applying Lemma \ref{lem: H1 rule lemma}. This process is best illustrated diagrammatically. Using the conventions of Figures \ref{fig: yellow1}--\ref{fig: yellow2}, we apply $[ \overline{H}_1, - ]$ to relations $R(-1,-1), \cdots, R(\mu, -1)$ on the outer black diagonal. This yields relations $R(-1,0), \cdots, R(\mu,0)$. We then apply $[ \overline{H}_1, - ]$ to $R(-2,0), \cdots, R(\mu-1,0)$ and repeat this process for as long as possible. Diagrammatically, each step yields a new green diagonal terminating in the $(1-\mu)$-th column, supplying $R(x-1, n-2), R(x, n-1)$ and $R(x, n-2)$. Finally, applying $[\overline{H}_1, - ]$ to $R(\mu, 0)$, we get the remaining relation $R(x-2, n-2)$. If $\mu \in \{-1, -2\}$, the process is similar, except we cannot assume $R(x-2, x-2)$. In these special cases the assumption is, however, redundant. 
The identity \eqref{eq: H2 Ar comm} follows from $Y(1-\mu, s) = 0$ by Lemma \ref{lem: Y-H equiv}. 
\enproof

\begin{figure}[h] \begin{tikzcd}[row sep=-7pt, column sep=-1.5pt] & \textcolor{rgb,255:red,92;green,214;blue,92}{R(-1,0)} && \yellowcell{\textcolor{rgb,255:red,92;green,214;blue,92}{R(-2,1)}} & \\ {R(-1,-1)} && \yellowcell{\textcolor{rgb,255:red,92;green,214;blue,92}{R(-2,0)}} && \yellowcell{\textcolor{rgb,255:red,153;green,92;blue,214}{R(-3,1)}} \\ & {R(-2,-1)} && \yellowcell{\textcolor{rgb,255:red,92;green,214;blue,92}{R(-3,0)}} \\ \yellowcell{R(-2,-2)} && \yellowcell{R(-3,-1)} && \yellowcell{\textcolor{rgb,255:red,92;green,214;blue,92}{R(-4,0)}} \\ & \yellowcell{R(-3,-2)} && \yellowcell{R(-4,-1)} \\ \yellowcell{R(-3,-3)} && {R(-4,-2)} && \textcolor{rgb,255:red,92;green,214;blue,92}{R(-5,-1)} \\ & \yellowcell{R(-4,-3)} && \textcolor{rgb,255:red,92;green,214;blue,92}{R(-5,-2)} \\ \yellowcell{R(-4,-4)} && \textcolor{rgb,255:red,92;green,214;blue,92}{R(-5,-3)} && \textcolor{rgb,255:red,92;green,214;blue,92}{R(-6,-2)} \\ & \textcolor{rgb,255:red,92;green,214;blue,92}{R(-5,-4)} && \textcolor{rgb,255:red,92;green,214;blue,92}{R(-6,-3)} \end{tikzcd} 
\captionof{figure}{Let $\mu=-3$ so that $m=4$, $x=-2$ and $n=2$. The instances of relations \eqref{eq: 1grel3} which are defining relations in the Levendorskii presentation are marked in black font. Relations which can be deduced using Lemma \ref{lem: H1 rule lemma} are marked green, and the relation given by $J(5)$ is marked purple. The yellow background shows the relations assumed by Lemma \ref{lem: real-mag}. The diagram is drawn so that the diagonal decreases in the first entry and the antidiagonal increases in the second entry while moving to the right.} 
\label{fig: yellow1} 
\end{figure}

\begin{figure}[h]
\begin{tikzcd}[row sep=-9pt, column sep=-2pt] & \textcolor{rgb,255:red,92;green,214;blue,92}{{R(-1,0)}} && \textcolor{rgb,255:red,92;green,214;blue,92}{{R(-2,1)}} && \textcolor{rgb,255:red,92;green,214;blue,92}{{R(-3,2)}} \\ {R(-1,-1)} && \textcolor{rgb,255:red,92;green,214;blue,92}{{R(-2,0)}} && \yellowcell{\textcolor{rgb,255:red,92;green,214;blue,92}{R(-3,1)}} \\ & {R(-2,-1)} && \yellowcell{\textcolor{rgb,255:red,92;green,214;blue,92}{R(-3,0)}} && \yellowcell{\textcolor{rgb,255:red,153;green,92;blue,214}{{R(-4,1)}}} \\ {R(-2,-2)} && {R(-3,-1)} && \yellowcell{\textcolor{rgb,255:red,92;green,214;blue,92}{R(-4,0)}} \\ & {R(-3,-2)} && \yellowcell{{R(-4,-1)}} && \yellowcell{\textcolor{rgb,255:red,92;green,214;blue,92}{R(-5,0)}} \\ \yellowcell{R(-3,-3)} && {R(-4,-2)} && \yellowcell{R(-5,-1)} \\ & \yellowcell{R(-4,-3)} && {R(-5,-2)} && \textcolor{rgb,255:red,92;green,214;blue,92}{{R(-6,-1)}} \\ \yellowcell{R(-4,-4)} && {R(-5,-3)} && \textcolor{rgb,255:red,92;green,214;blue,92}{{R(-6,-2)}} \\ & \yellowcell{R(-5,-4)} && \textcolor{rgb,255:red,92;green,214;blue,92}{{R(-6,-3)}} && \textcolor{rgb,255:red,92;green,214;blue,92}{{R(-7,-2)}} \\ \yellowcell{R(-5,-5)} && \textcolor{rgb,255:red,92;green,214;blue,92}{{R(-6,-4)}} && \textcolor{rgb,255:red,92;green,214;blue,92}{{R(-7,-3)}} \\ & \textcolor{rgb,255:red,92;green,214;blue,92}{{R(-6,-5)}} && \textcolor{rgb,255:red,92;green,214;blue,92}{{R(-7,-4)}} && \textcolor{rgb,255:red,92;green,214;blue,92}{{R(-8,-3)}} \end{tikzcd} 
\captionof{figure}{Relations for $\mu=-4$, with $m=5$, $x=-3$ and $n=2$. } 
\label{fig: yellow2} 
\end{figure}

\subsection{Simultaneous induction}
\label{subsec: conditions}

We will simultaneously induct on the following, for $N \geq \mu - 1$:
\begin{itemize}
\item[($\mathtt{A}_N$)] $[H_1, \Theta_{m-\mu}] = 0$ for $1 \leq m \leq N+1$; 
\item[($\mathtt{B}_N$)] $R(r,s)$ for all $r,s \in \Z$ with $0 \leq |s-r| \leq N-\mu+1$;  
\item[($\mathtt{C}_N$)] $\overline{Y}(m-\mu,s) = 0$ for all $0 \leq m \leq N$ and $s \in \Z$; 
\item[($\mathtt{D}_N$)] $[\Theta_{m-\mu}, \Theta_{n-\mu}] = 0$ for $m,n \geq 1$ with $2 \leq m+n \leq N + 2$. 
\end{itemize}
This means that, proving, e.g., $(\mathtt{C}_{N+1})$, we are assuming $(\mathtt{A}_{N+1})$, $(\mathtt{B}_{N+1})$, $(\mathtt{C}_{N})$ and $(\mathtt{D}_{N})$. 

Whenever the choice of $N$ leads to one of the index ranges being empty, we declare that the corresponding condition holds by convention. This is the case for $(\mathtt{A}_N), (\mathtt{C}_N)$ and $(\mathtt{D}_N)$ with $N \leq -1$. The statements $(\mathtt{A}_0), (\mathtt{C}_0)$ and $(\mathtt{D}_0)$ hold trivially. Indeed, $(\mathtt{A}_{0})$ holds since $H_1 = \KK\mi \Theta_{1-\mu}$, condition $(\mathtt{C}_{0})$ reduces to \eqref{eq: ind_H1A} and $(\mathtt{D}_{0})$ is tautological. 

Condition $(\mathtt{A}_1)$, which is the same as the defining relation \eqref{eq: 1HHlev}, should be regarded as the base of the overall induction, with the caveat that an initial segment of the series $(\mathtt{B}_{N})$ is independent of the other conditions and requires a separate base of induction. This base consists of $(\mathtt{B}_{\mu-1})$--$(\mathtt{B}_{\mu+1})$ and is considered in \S \ref{ss: early B}. Since, as pointed out in \S \ref{ss: proof of B}, the inductive proof of $(\mathtt{B}_{N})$ only relies on $(\mathtt{B}_{N-1})$ if $N \leq 0$, this does not create any problems.  

 Therefore, the overall proof can be divided into three phases: (i) base cases $(\mathtt{B}_{\mu-1})$--$(\mathtt{B}_{\mu+1})$, (ii) induction to show $(\mathtt{B}_{N+1})$ for $N \leq -1$, (iii) simultaneous induction to show $(\mathtt{A}_{N+2})$ and $(\mathtt{B}_{N+1})$--$(\mathtt{D}_{N+1})$ for $N \geq 0$. The structure of the induction is illustrated in Figure \ref{fig: induction}.

\begin{figure}[h]
\begin{tikzcd}[column sep = 15pt, 
  execute at end picture={
    \node[
      draw,
      fit=(bmu-1)(bmu)(bmu+1),
      inner xsep=6pt,
      inner ysep=6pt
    ] (bbox) {};
    \draw[->]
      (bbox.east) --
      (bbox.east -| bmu+2.west);
    \draw[->]
      (c1.south) --
      (c1.south |- bbox.north);
  }
]
  & |[alias=c1]| {\mathtt{C}_1}
  &&&&&& {\mathtt{A}_1 = \mathtt{D}_1}
  & {\mathtt{A}_2}
  & {\mathtt{A}_3}
  \\
  |[alias=bmu-1]| {\mathtt{B}_{\mu-1}}
  & |[alias=bmu]| {\mathtt{B}_{\mu}}
  & |[alias=bmu+1]| {\mathtt{B}_{\mu+1}}
  & |[alias=bmu+2]| {\mathtt{B}_{\mu+2}}
  & \cdots
  & {\mathtt{B}_{-1}}
  & {\mathtt{B}_0}
  & {\mathtt{B}_1}
  & {\mathtt{B}_2}
  & {\mathtt{B}_3}
  \\
  &&&&&&&& {\mathtt{C}_2} & {\mathtt{C}_3}
  \\
  &&&&&&&& {\mathtt{D}_2} & {\mathtt{D}_3}
  \arrow[from=1-8, to=1-2]
  \arrow[from=1-8, to=2-8]
  \arrow[from=1-9, to=2-9]
  \arrow[from=1-10, to=2-10]
  \arrow[from=2-4, to=2-5]
  \arrow[from=2-5, to=2-6]
  \arrow[from=2-6, to=2-7]
  \arrow[from=2-7, to=2-8]
  \arrow[from=2-8, to=1-9]
  \arrow[from=2-9, to=3-9]
  \arrow[from=2-10, to=3-10]
  \arrow[from=3-9, to=4-9]
  \arrow[from=3-10, to=4-10]
  \arrow[from=4-9, to=1-10]
\end{tikzcd}
\captionof{figure}{Schematic explanation of the flow of the induction.} 
\label{fig: induction} 
\end{figure}

\subsection{Proof of $(\mathtt{B}_{\mu-1})$--$(\mathtt{B}_{\mu+1})$}
\label{ss: early B}

Below we establish the base of the induction for the series $(\mathtt{B}_{N})$. 

\begin{assum}
Only Lemma \ref{lem: H2 comm} is assumed. 
\end{assum}

The following relations are instances of the defining Levendorskii relations:
\begin{align}
&R(s,s) \quad &&\mbox{for   } \ \mu - 1 \leq s \leq -1, \\
&R(s-1,s) \quad &&\mbox{for   }\ \mu \leq s \leq -1, \\
&R(s-2,s) \quad &&\mbox{for   }\ \mu + 1 \leq s \leq -1. 
\end{align}
If $\mu = -1$, the third range above is empty, but $R(-2,0)$ holds by definition $J(3)$. 
The specific relations for $\mu = -1, -2$ are illustrated in Figures \ref{fig: pra}--\ref{fig: prb}.

\begin{figure}[h]
\begin{tikzcd}[row sep=-10pt]
	{R(-1,-1)} && \\
	& {R(-2,-1)} \\
	{R(-2,-2)} && {R(-3,-1)} \\
	& {R(-3,-2)} \\
	{R(-3,-3)}
\end{tikzcd}
\captionof{figure}{Instances of relations \eqref{eq: 1grel3} which are defining relations in the Levendorskii presentation for $\mu = -2$. The triangular pattern is typical. } 
\label{fig: pra}
\end{figure} 

\begin{figure}[h]
\begin{tikzcd}[row sep=-10pt]
	{R(-1,-1)} && R(-2,0) \\
	& {R(-2,-1)} \\
	{R(-2,-2)} && 
\end{tikzcd}
\captionof{figure}{Instances of relations \eqref{eq: 1grel3} which are defining relations in the Levendorskii presentation, or hold by $J(3)$, for $\mu = -1$. } 
\label{fig: prb}
\end{figure}

Since $R_R(s,s)$ and $R_R(s-1,s)$ are always a (possibly zero) scalar (for any $s \in \Z$), we get 
\eq \label{eq: H RR}
[\overline{H}_1, R_R(s,s)] = 0, \qquad [\overline{H}_2, R_R(s,s)] = 0, \qquad [\overline{H}_1, R_R(s-1,s)] = 0.  
\eneq
Moreover, \eqref{eq: ind_H1A}--\eqref{eg: rule H1} and Lemma \ref{lem: H2 comm} imply that 
\begin{align}
[\overline{H}_1, R_L(s,s)] &= 2 R_L(s,s+1) - 2 \Ck R_L(s-1,s), \label{eq: H1 R comm} \\
[\overline{H}_2, R_L(s,s)] &= 2R_L(s,s+2) - 2 \Ck^2 R_L(s-2,s),  \label{eq: H2 R comm}  \\
[\overline{H}_1, R_L(s,s+1)] &= R_L(s+1,s+1) + R_L(s,s+2)  \label{eq: H1 R comm2} \\
& \quad  - \Ck R_L(s,s) - \Ck R_L(s-1,s+1).  
\end{align}
Applying the last equality with $s=-2$ and $s=\mu-1$ (together with \eqref{eq: H RR}, $R(-2,-1)$, $R(\mu-1,\mu)$ and Lemma \ref{lem: H1 rule lemma}), we get $R(-2,0)$ and $R(\mu-2,\mu)$. This step is illustrated below (for $\mu = -2$) in Figure \ref{fig: pr} below. 
\vspace{0.25cm}

\begin{figure}[h] 
\begin{tikzcd}[row sep=-7pt]
	{R(-1,-1)} && \textcolor{rgb,255:red,92;green,214;blue,92}{{R(-2,0)}} \\
	& \textcolor{rgb,255:red,69;green,89;blue,237}{{R(-2,-1)}} \\
	{R(-2,-2)} && {R(-3,-1)} \\
	& \textcolor{rgb,255:red,69;green,89;blue,237}{{R(-3,-2)}} \\
	{R(-3,-3)} && \textcolor{rgb,255:red,92;green,214;blue,92}{{R(-4,-2)}}
	\arrow[color={rgb,255:red,69;green,89;blue,237}, no head, from=1-1, to=2-2]
	\arrow[color={rgb,255:red,92;green,214;blue,92}, from=2-2, to=1-3]
	\arrow[color={rgb,255:red,69;green,89;blue,237}, no head, from=3-1, to=2-2]
	\arrow[color={rgb,255:red,69;green,89;blue,237}, no head, from=3-1, to=4-2]
	\arrow[color={rgb,255:red,69;green,89;blue,237}, no head, from=3-3, to=2-2]
	\arrow[color={rgb,255:red,69;green,89;blue,237}, no head, from=3-3, to=4-2]
	\arrow[color={rgb,255:red,92;green,214;blue,92}, from=4-2, to=5-3]
	\arrow[color={rgb,255:red,69;green,89;blue,237}, no head, from=5-1, to=4-2]
\end{tikzcd}
\captionof{figure}{Propagation using \eqref{eq: H1 R comm2}. Black and blue relations are assumed to be known. The commutator is applied to the blue relations. The blue lines connect to the other relevant known relations. The green relation, also indicated by the green arrow, is deduced.} 
\label{fig: pr}
\end{figure}

This establishes the base of the induction. All the desired relations are now obtained using upward and downward induction, with upward inductive hypothesis ($s \geq -1$)
\begin{align}
\operatorname{IH}^\uparrow(s):&\qquad& R(s,s), \quad R(s-1,s), \quad R(s-2,s), \quad R(s-1,s+1), 
\end{align}
and downward inductive hypothesis ($s \leq -\mu-1$) 
\begin{align}
\operatorname{IH}^\downarrow(s):&\qquad& R(s,s), \quad R(s,s+1), \quad R(s,s+2), \quad R(s-1,s+1). 
\end{align}
In the upward case, the inductive hypothesis $\operatorname{IH}^\uparrow(s)$ and \eqref{eq: H RR}, together with \eqref{eq: H1 R comm}--\eqref{eq: H2 R comm}, yield $R(s,s+1)$ and $R(s, s+2)$. 
 Then \eqref{eq: H1 R comm2} yields $R(s+1, s+1)$, which establishes $\operatorname{IH}^\uparrow(s+1)$. These steps are illustrated in Figures \ref{fig: pr2}--\ref{fig: pr3} below.  The downward induction is analogous. 

\begin{figure}[h] 
\begin{tikzcd}[row sep=-7pt]
	&& \textcolor{rgb,255:red,92;green,214;blue,92}{{R(s,s+2)}} \\
	& \textcolor{rgb,255:red,92;green,214;blue,92}{{R(s,s+1)}} \\
	\textcolor{rgb,255:red,69;green,89;blue,237}{{R(s,s)}} && {R(s-1,s+1)} \\
	& {R(s-1,s)} \\
	&& {R(s-2,s)}
	\arrow[color={rgb,255:red,92;green,214;blue,92}, bend left=30, squiggly, from=3-1, to=1-3]
	\arrow[color={rgb,255:red,92;green,214;blue,92}, from=3-1, to=2-2]
	\arrow[color={rgb,255:red,69;green,89;blue,237}, no head, from=4-2, to=3-1]
	\arrow[color={rgb,255:red,69;green,89;blue,237}, bend left=30, squiggly, no head, from=5-3, to=3-1]
\end{tikzcd}
\captionof{figure}{Propagation using \eqref{eq: H1 R comm} and \eqref{eq: H2 R comm}.}
\label{fig: pr2}
\end{figure}

\begin{figure}[h] 
\begin{tikzcd}[row sep=-7pt]
	\textcolor{rgb,255:red,92;green,214;blue,92}{{R(s+1,s+1)}} && {R(s,s+2)} \\
	& \textcolor{rgb,255:red,69;green,89;blue,237}{{R(s,s+1)}} \\
	{R(s,s)} && {R(s-1,s+1)} \\
	& {R(s-1,s)} \\
	&& {R(s-2,s)}
	\arrow[color={rgb,255:red,69;green,89;blue,237}, no head, from=1-3, to=2-2]
	\arrow[color={rgb,255:red,92;green,214;blue,92}, from=2-2, to=1-1]
	\arrow[color={rgb,255:red,69;green,89;blue,237}, no head, from=3-1, to=2-2]
	\arrow[color={rgb,255:red,69;green,89;blue,237}, no head, from=3-3, to=2-2]
\end{tikzcd}
\captionof{figure}{Propagation using \eqref{eq: H1 R comm2}.}
\label{fig: pr3}
\end{figure}


\subsection{The inductive step} 

We will now prove $(\mathtt{A}_{N+1})$--$(\mathtt{D}_{N+1})$, where $N$ has the following range for each series of conditions: 
\[
\mathtt{A} : N \geq 1, \qquad \mathtt{B} : N \geq \mu+1 \geq 0, \qquad \mathtt{C}-\mathtt{D} : N \geq 0. 
\]
Figure \ref{fig: log depend} shows the logical dependencies between the different conditions. 
\begin{figure}[h]
\begin{tikzcd}
    & {\mathtt{A}_{N+1}} \\
    {\mathtt{B}_N} & {\mathtt{B}_{N+1}} \\
    & {\mathtt{C}_{N+1}} \\
    {\mathtt{D}_N} & {\mathtt{D}_{N+1}}
    \arrow[from=1-2, to=2-2]
    \arrow[bend left=50, from=1-2, to=3-2]
    \arrow[from=2-1, to=1-2]
    \arrow[from=2-1, to=2-2]
    \arrow[from=2-1, to=3-2]
    \arrow[bend left=50, from=2-2, to=4-2]
    \arrow[from=3-2, to=4-2]
    \arrow[bend left=75, from=4-1, to=1-2]
\end{tikzcd}
\caption{Logical dependencies between the inductive conditions.}
\label{fig: log depend}
\end{figure}

\subsubsection{Preparation} 

In this step we will use the following assumptions.     

\begin{assum}
Only $(\mathtt{C}_{N})$ and $(\mathtt{D}_{N-1})$ are assumed. 
\end{assum}

\nc{\sh}{{\mathsf{h}}}

Consider the polynomial algebra $B:=\C(q)[\{x_r\}_{m=1}^\infty]$,
with $\N$-grading given by $\deg(x_m)=m$. Define elements
$\{\sh_m\}_{m=1}^\infty$ of $B$ via
\[
(q-q\mi)\mi \exp\left((q-q^{-1})\sum_{m=1}^\infty \sh_rz^{r}\right)=1+\sum_{m=1}^\infty x_m z^{m}.
\]
Then $\sh_m = x_m + p_m(x_1, \cdots, x_{m-1})$ with $p_m$ satisfying $\deg(p_m(x_1,\ldots,x_{r-1}))=m$. Using the polynomials $p_m$, we define
$H_{1},\cdots,H_{N+1}\in \Vi_\mu$ as 
\begin{equation}\label{def: h_k}
  H_{m}:= \KK^{-1}\Theta_{m-\mu} +
           p_m(\KK^{-1}\Theta_{1-\mu},\cdots, \KK^{-1}\Theta_{m-\mu-1})
  \qquad (1\leq m \leq N+1).
\end{equation}
These $H_{m}$ are well-defined and are independent of the choice of $N>m-1$ thanks to assumption $(\mathtt{D}_{N-1})$ and the aforementioned degree condition on $p_m$. Set 
\eq \label{eq: Hm overline}
\overline{H}_m = \textstyle\frac{m}{[2m]} H_m.  
\eneq
Using $(\mathtt{C}_{N})$ and similar arguments to those in Lemma \ref{lem: Y-H equiv}, one gets 
\begin{align}
[\overline{H}_m, A_s] &= A_{s+m} - \Ck^m A_{s-m}, \\ \label{eq: Hm comm}
\ \ \ [\overline{H}_m, R(r,s)] &= R_L(r{+}m, s) + R_L(r, m {+} s) - \Ck^m R_L(r{-}m, s) - \Ck^m R_L(r, s{-}m). 
\end{align}

\subsubsection{Proof of $(\mathtt{A}_{N+1})$}  

We need to prove $[H_1, \Theta_{N+2-\mu}] = 0$ or, equivalently, \linebreak $[\overline{H}_1, \overline{\Theta}_{N+2-\mu}] = 0$. By $(\mathtt{B}_N)$, 
\[
\overline{\Theta}_{N+2-\mu} = R_L(r, s) \qquad (r \in \Z, \ s = N+1+r-\mu). 
\] 
Hence
\begin{align*}
[\overline{H}_1, \overline{\Theta}_{N+2-\mu}] &= R_L(r+1, s) + R_L(r, s+1) - \Ck R_L(r-1, s) - \Ck R_L(r, s-1) \\ 
&= R_L(r, s+1) - \Ck R_L(r-1, s),
\end{align*}
where the first equality follows from \eqref{eq: ind_H1A} and the second equality follows from $(\mathtt{B}_{N-1})$. 
Adding these up for $r = 0, 1$, we get 
\[
2 [\overline{H}_1, \overline{\Theta}_{N+2-\mu}] = R_L(0, N-\mu+2) - \Ck^2 R_L(-2, N-\mu). 
\]
On the other hand, 
\begin{align*}
[\overline{H}_2, \overline{\Theta}_{N+1-\mu}] &= [\overline{H}_2, R_L(0, N-\mu)] = R_L(2, N-\mu) + R_L(0, N-\mu+2) \\
& \quad - \Ck^2 R_L(-2, N-\mu) - \Ck^2 R_L(r, N-\mu-2) \\ 
&= R_L(0, N-\mu+2) - \Ck^2 R_L(-2, N-\mu), 
\end{align*}
by $(\mathtt{B}_{N-1})$ and \eqref{eq: Hm comm}. Hence
\[
2 [\overline{H}_1, \overline{\Theta}_{N+2-\mu}] = [\overline{H}_2, \overline{\Theta}_{N+1-\mu}]. 
\]
By a similiar reasoning, we also get 
\[
(N+1) [\overline{H}_1, \overline{\Theta}_{N+2-\mu}] = [\overline{H}_{N+1}, \overline{\Theta}_{2-\mu}]. 
\]
On the other hand, \eqref{def: h_k} and $(\mathtt{D}_N)$ yield 
\[
[\overline{H}_m, \overline{\Theta}_{N+3-\mu-m}] = \textstyle\frac{m}{[2m]}\KK\mi [\overline{\Theta}_{m-\mu}, \overline{\Theta}_{N+3-\mu-m}]
\]
for $1 \leq m \leq N+1$. Therefore, 
\[
[\overline{H}_1, \overline{\Theta}_{N+2-\mu}] = [4]\mi\KK\mi [\overline{\Theta}_{2-\mu}, \overline{\Theta}_{N+1-\mu}] = [2N+2]\mi\KK\mi [\overline{\Theta}_{N+1-\mu}, \overline{\Theta}_{2-\mu}]. 
\]
If $N = 1$ then the RHS is manifestly zero since $[\overline{\Theta}_{2-\mu}, \overline{\Theta}_{2-\mu}] = 0$. Otherwise, $[4] \neq -[2N+2]$ forces $[\overline{\Theta}_{N+1-\mu}, \overline{\Theta}_{2-\mu}] = 0$, finishing the proof. 

\subsubsection{Proof of $(\mathtt{B}_{N+1})$}
\label{ss: proof of B}

In this step we will use the following assumptions.

\begin{assum}
Only $(\mathtt{B}_{N})$ and $(\mathtt{A}_{N+1})$ are assumed. If $\mu \leq N \leq -1$ then $(\mathtt{A}_{N+1})$ is trivial. 
\end{assum}

Let $N \geq \mu+1$. 
Take any $s \geq r$ with $s-r = N-\mu +2$. If we apply $[\overline{H}_1, - ]$ to $R(r,s-1)$, we obtain 
\begin{align}
0 \xeq{(\mathtt{A}_{N+1})} \Ck^r [\btheta_{N{-}\mu{+}2}, \overline{H}_1] &\xeq{(\mathtt{B}_{N})} [\overline{H}_1, R_L(r,s{-}1)] \notag \\
&\xeq{\eqref{eq: ind_H1A}} R_L(r,s) - \Ck R_L(r{-}1,s{-}1) + R_L(r{+}1,s{-}1) - \Ck R_L(r,s{-}2) \\ 
&\xeq{(\mathtt{B}_{N-1})} R_L(r,s) - \Ck R_L(r{-}1,s{-}1). \label{eq: RLrs}
\end{align}
We will now perform nested upward and downward inductions. Let $x = -\lceil \frac{N-\mu}{2} \rceil -2$ and $y = N-\mu+2$ and set  
\[
\mathtt{B}_{N+1}(r) = 
\begin{cases}
R(m, m+y) \text{ holds for } x \leq m \leq r & \text{if } r \geq x, \\
R(m, m+y) \text{ holds for } x \geq m \geq r & \text{if } r \leq x.   
\end{cases}
\] 
By the inductive definition $J(N-\mu+3)$ (if $N \geq -1$), or the defining relations (otherwise), we know that 
$R(x, x+y)$, which is equivalent to $\mathtt{B}_{N+1}(x)$.  

Let $r \geq \mu$. Assuming $\mathtt{B}_{N+1}(r)$, we have 
\[
R_L(r+1,r+y+1) \xeq{\eqref{eq: RLrs}} \Ck R_L(r,r{+}y)  \xeq{\mathtt{B}_{N+1}(r)} \Ck^{r+1} \btheta_{N-\mu+3}, 
\] 
which proves $\mathtt{B}_{N+1}(r+1)$. 
The argument for $r < \mu$ is similar. Assuming $\mathtt{B}_{N+1}(r)$, we get 
\[
R_L(r{-}1,r+y{-}1) \xeq{\eqref{eq: RLrs}} \Ck\mi R_L(r,r+y)  \xeq{\mathtt{B}_{N+1}(r)} \Ck^{r-1} \btheta_{N-\mu+3}, 
\] 
which proves $\mathtt{B}_{N+1}(r-1)$.

\subsubsection{Proof of $(\mathtt{C}_{N+1})$}   
\label{sss: CN}

In this step we will use the following assumptions.     

\begin{assum}
Only $(\mathtt{A}_{N+1})$ and $(\mathtt{B}_{N})$ are assumed. 
\end{assum}

If $N=0$ then $\mathtt{C}_{1}$ is equivalent to Lemma \ref{lem: H2 comm}. So let $N \geq 1$. 
It suffices to show that $\overline{Y}(N-\mu+1, s) = 0$ for all $s \in \Z$. This will follow from Lemma \ref{lem: real-mag}, with $m=N-\mu+1$, if we check that the assumptions are satisfied. Condition $(\mathtt{A}_{N+1})$ holds by construction of the inductive loop. By $(\mathtt{B}_{N})$, the relation $R(r,s)$ holds whenever $|s-r| \leq m$, which covers all the cases required by Lemma \ref{lem: real-mag}. 

\subsubsection{Proof of $(\mathtt{D}_{N+1})$}  
In this step we will use the following assumptions.\footnote{For $N=0$, the condition $(\mathtt{D}_1)$ is equivalent to      $(\mathtt{A}_1)$. The proof below still works, but it is redundant. Also note that we cannot use this argument to derive $(\mathtt{A}_1)$ since it assumes $(\mathtt{B}_1)$.} 

\begin{assum}
Only $(\mathtt{B}_{N+1})$ and $(\mathtt{C}_{N+1})$ are assumed. 
\end{assum}

If $m$ or $n$ equals zero, the assertion is trivial, so let us take $1 \leq m,n$ with $m+n \leq N+3$. It suffices to prove the equivalent statement $[\overline{H}_m, \Theta_{n-\mu}] = 0$. Indeed, 
\begin{align*}
[\overline{H}_m, \Theta_{n-\mu}] &\xeq{(\mathtt{B}_{N})}  [\overline{H}_m, R(0, n-\mu-1)] \\
&\xeq{(\mathtt{C}_{N+1})} R(m, n-\mu-1) + R(0, m+n-\mu-1) \\ 
&\qquad - \Ck^m R(-m,n-\mu-1) - \Ck^m R(0,n-m-\mu-1) \xeq{(\mathtt{B}_{N+1})} 0. 
\end{align*}

\subsection{Proof of the real--imaginary commutation relations} 

The goal of this subsection is to prove the key Lemma \ref{lem: real-mag}, which was used in induction step $(\mathtt{C}_{N+1})$ and the preliminary Lemma \ref{lem: H2 comm}. 

\Lem \label{lem: key id}
For any $s,t \in \Z$, the following identity holds: 
\begin{align}
\ \ &[R_L(s-1, t-1), A_s] + [R_L(s, t-2), A_s] \ + \\ \label{eq: key id}
\ \ &+  q^{-2} \big( [A_{s+1}, R_L(s-1,t-2)]_{q^4} - [R_L(s,t-1), A_{s-1}]_{q^4} \big) = \\ 
\ \ = \ & [A_{t-1}, R_L(s,s-1)]  +  \textstyle\frac12 q^{-2} \big( [R_L(s,s), A_{t-2}]_{q^4} - [A_{t}, R_L(s-1,s-1)]_{q^4}   \big). 
\end{align}
\enlem

\Proof
This statement is a general algebraic identity and the proof does not involve any relations specific to the $q$-Onsager algebra. We will compare the LHS and RHS of \eqref{eq: key id} by decomposing them into homogeneous polynomials in the generators. We will use the following auxiliary identities, which hold for arbitrary elements $a,b,c$ in a free non-commutative algebra and are easily verified by direct calculation: 
\begin{align} \label{eq: abc 1}
[[a,c]_{q^2},b] - q^{-2} [[a,b]_{q^2},c]_{q^4} &= - q^{-2} [a,[b,c]_{q^2}]_{q^4}, \\ \label{eq: abc 2} 
[[a,b]_{q^2}, a] + [[b,a]_{q^2},a] &=   [b, [a,a]_{q^2}], \\ \label{eq: abc 3} 
q^{-2}\big( [a,[b,c]_{q^2}]_{q^4} - [[a,b]_{q^2}, c]_{q^4} \big) &= [b, [a,c]_{q^2}].  
\end{align}
Using 
\eqref{eq: abc 1} in the first equation below,  \eqref{eq: abc 2} in the second, and \eqref{eq: abc 3} in the third and fourth equations, we get 
\begin{align*}
[[A_t, A_{s-1}]_{q^2}, A_s] -q^{-2} [[A_t, A_s]_{q^2}, A_{s-1}]_{q^4} &= - q^{-2}[A_{t}, [A_s, A_{s-1}]_{q^2}]_{q^4}, \\
 [[A_s,A_{t-1}]_{q^2}, A_s] + [[A_{t-1}, A_s]_{q^2}, A_s] &= [A_{t-1}, [A_s, A_s]_{q^2}], \\ 
q^{-2}\big( [A_{s+1}, [A_{t-1}, A_{s-1}]_{q^2}]_{q^4} - [[A_{s+1}, A_{t-1}]_{q^2}, A_{s-1}]_{q^4} \big) &= 
[A_{t-1}, [A_{s+1}, {A_{s-1}}]_{q^2}], \\ 
[[A_{s+1}, A_{t-2}]_{q^2}, A_s] + q^{-2} [A_{s+1}, [A_s, A_{t-2}]_{q^2}] &= q^{-2} [[A_{s+1}, A_{s}]_{q^2}, A_{t-2}]_{q^4}. 
\end{align*}
Summing the above equalities sidewise, we recover \eqref{eq: key id}. 
\enproof

Pick $m \geq 1 - \mu \geq 2$ and let 
\[
x = - \lceil \textstyle\frac{m}{2} \rceil, \qquad n = m + x = \lfloor \textstyle\frac{m}{2} \rfloor.  
\] 

\Lem \label{lem: real-mag}
Suppose that $(\mathtt{A}_{m+\mu})$ and relations
\begin{align*}
&R(x, x), \quad &&R(x-1, x-1), \quad &&R(x-2, x-2), \\ 
&R(x, x-1), \quad &&R(x-1,x-2), \quad && R(x-2,n-3), \\ 
&R(x,n-1), \quad &&R(x-1, n-2), \quad && R(x-1, n-1), \\
&R(x, n-2), \quad &&R(x-1, n-3), \quad && R(x-2, n-2) 
\end{align*}
hold. Then $\overline{Y}(m,s) = 0$, for all $s \in \Z$. Moreover, if \((\mu,m)=(-1,2)\) or \((\mu,m)=(-2,3)\) then the assumption $R(x-2, x-2)$ is not necessary. 
\enlem

\Proof
By definition, 
\eq \label{eq: Yms proof}
\overline{Y}(m,x) = 
[\btheta_{m+1}, A_{x}] + [\btheta_{m-1}, A_{x}] \Ck + q^{-2}\left( [A_{x+1}, \btheta_{m}]_{q^{4}} -  [\btheta_{m}, A_{x-1}]_{q^{4}}\Ck \right). 
\eneq
Applying $R(x-1,n-1)$, $R(x, n-2)$, $R(x-1, n-2)$ and $R(x, n-1)$, the RHS of \eqref{eq: Yms proof} becomes the LHS of \eqref{eq: key id}, with $s=x$ and $t=n$, multiplied by $-\Ck^{1-x}$. 
Hence, by Lemma \ref{lem: key id}, and using $R(x,x-1)$, $R(x,x)$ and $R(x-1,x-1)$, we get  
\[
\overline{Y}(m,x) = [A_{n-1}, \Theta_2]  +  q^{-2} \big( \Ck [\Theta_1, A_{n-2}]_{q^4} - [A_{n}, \Theta_1]_{q^4} \big) = - Y(1, n-1) = 0. 
\]
The last equality holds trivially if $\mu \leq -2$, or follows from \eqref{eq: ind_H1A} if $\mu = -1$. An analogous calculation, with shifts $x \mapsto x-1$ and $n \mapsto n-1$ throughout, yields $\overline{Y}(m,x-1) = 0$. The new calculation uses the relations
\begin{align*}
&R(x-2,n-2), \quad &&R(x-1, n-3), \quad &&R(x-2, n-3), \\ 
&R(x-1, n-2), \quad &&R(x-1,x-2), \quad &&R(x-1, x-1), \\
&R(x-2, x-2).  
\end{align*}
These seven relations have two relations in common with those used in the previous calculation. Together they form the twelve relations assumed in the statement of the Lemma. If \((\mu,m)=(-1,2)\) or \((\mu,m)=(-2,3)\) then $\overline{Y}(m,x-1) = 0$ can also be established without assumption $R(x-2, x-2)$ - we do this computationally in Sage, using Gr\"{o}bner basis methods (the GBNP GAP Package \cite{GBNP}), see \cite{LevQSP}. 

Finally, we deduce that $\overline{Y}(m,s) = 0$, for all $s$, using upward and downward induction. Consider the upward case. Let $s \geq x+1$ and assume that $\overline{Y}(m,r)$ holds for $x-1 \leq r \leq s-1$. 
Then 
\begin{align*} 
\overline{Y}(m,s) &= 
[\Theta_{m+1}, A_{s}] + [\Theta_{m-1}, A_{s}] \Ck + q^{-2}\big( [A_{s+1}, \Theta_{m}]_{q^{4}} -  [\Theta_{m}, A_{s-1}]_{q^{4}}\Ck \big) \\ 
&= [\bH_1, \overline{Y}(m,s-1)] + \Ck \overline{Y}(m, s-2) = 0. 
\end{align*} 
The second equality holds by \eqref{eq: ind_H1A} and $(\mathtt{A}_{m+\mu})$, and the third by the inductive hypothesis. 
The downward induction is similar. 
\enproof

\section{General simply-laced split case} 
\label{sec: gen sl split c}

In this section, we generalize the results and methods from \S \ref{s: qOns}--\S \ref{sec: Levendorskii proof} to shifted quantum affine symmetric pair algebras of an arbitrary simply-laced split type. In particular, we establish a Levendorskii presentation for these algebras for any anti-dominant shift. The main new difficulty is to derive all the $\imath$Drinfeld--Serre relations. This is tackled in \S \ref{sss: iSerre proof gen}. 
The proof uses a version of techniques from \S \ref{sec: Levendorskii proof}: we first establish some general identities without using $\imath$Drinfeld--Serre relations (Lemma \ref{lem: Serre key lem}), and then use these to set up an induction, where the base case needs to be treated with special care. This is somewhat parallel to our treatment of the quadratic real-imaginary and real-real relations in \S \ref{sss: iSerre proof gen}. 
This aspect again distinguishes the quantum symmetric pair setting from quantum affine algebras - the derivation of the Drinfeld--Serre relations in the latter case (c.f.\ \cite{FT19}) is rather straightforward. 

\subsection{Shifted quantum affine symmetric pair algebras} 
\label{subsec: Gamma}

Consider a simply-laced undirected graph $\boldsymbol\Gamma$ without edge-loops and with a finite vertex set $\indx$. Let $(a_{ij})_{i,j \in \indx}$ be the corresponding generalized Cartan matrix, to which we associate Lie-theoretic data in the usual way. In particular, let $\Lambda$ (resp. $\Lambda^\vee$) denote the coweight (resp.\ weight) lattice, with simple roots $\alpha^\vee_i$ and fundamental weights $\omega^\vee_i$ ($i \in \indx$). We endow $\Lambda$ with the usual partial order, i.e., $\mu \leq \lambda$ if and only if $\lambda - \mu$ can be written as a sum of positive coroots with non-negative integral coefficients. Let $\Lambda_- \subset \Lambda$ denote the set of anti-dominant coweights (i.e., $\ai(\mu) \leq 0$ for all $i \in \indx$). 

\subsubsection{Series definition}

Let $\mu \in \Lambda$ be \emph{any} coweight and set 
\[
\mu_i = \ai(\mu). 
\]
We will use the following notation for generating series: 
\[
\As{i}{z} = \sum_{r \in \Z} A_{i,r} z^r, \quad \Ths{i}{z} = \KK_i (q-q\mi)\mi z^{-\mu_i} +  \sum_{r > -\mu_i} \Theta_{i,r} z^r, \quad \Dels = \sum_{r \in \Z} \Ck^r z^r. 
\]
By convention, $\Theta_{i,-\mu_i} = \KK_i (q-q\mi)\mi$ and $\Theta_{i,r} = 0$ for $r < -\mu_i$. 
Below we define the $\mu$\emph{-shifted quantum affine symmetric pair algebra} $\Ui_\mu$ associated to $\boldsymbol\Gamma$. Note that $\boldsymbol\Gamma$ need not be of finite type. In particular, a simply-laced graph of affine type yields a ``toroidal'' or ``double affine'' shifted quantum symmetric pair algebra. In the unshifted case, algebras of this kind are considered in \cite{ALPV, LuPan}.  

\Defi \label{defi: shifted iqsp}
Let $\Ui_\mu$ be the algebra generated by $A_{i,s}$ and 
$\Theta_{i,r}$, as well as central elements 
$\KK_{i}^{\pm1}$, $\Ck^{\pm1}$ $(i\in \indx, \ r > -\mu_i, \ s\in\Z)$, subject to the following relations: 
\begin{align} 
\bTh_i(z) \bTh_j(w) &= \bTh_j(w) \bTh_i(z), \label{eq: drel1} \\[2pt]
\bTh_i (z)  \bA_j(w) &= \frac{(1 -q^{-a_{ij}}zw^{-1}) (1 -q^{a_{ij}} zw \CCC)}{(1 -q^{a_{ij}}zw^{-1})(1 -q^{-a_{ij}}zw \CCC)} \bA_j(w) \bTh_i (z), \label{eq: drel2} \\[2pt] 
\bA_i(z)\bA_j(w) &= \bA_j(w)\bA_i(z), \qquad\qquad\qquad\qquad \text{ if }a_{ij}=0,  \label{eq: drel3} \\[2pt]
(q^{-1}z -w) \bA_i(z) \bA_j(w) &= (z-q^{-1}w) \bA_j(w) \bA_i(z), \quad \quad \quad \ \text{ if }a_{ij}=-1, \label{eq: drel4} \\[2pt]
\Sym_{z,w} (q^2z - w) \bA_i(z) \bA_i(w) &= 
(w-z) \bDel(zw) \big(\bTh_i(z) - \bTh_i(w) \big) \quad \text{ if }  a_{ij} =2, 
 \label{eq: drel5}
\end{align} 
\begin{align} 
&(q w_2 - z)(q w_1 - z) \Sym_{w_1,w_2}[ \bA_i(w_1), [\bA_i(w_2), \bA_j(z)]_{q\mi}]_q = \\  \label{eq: drel6}
&\qquad\qquad = \bDel(w_1w_2) z (q-q\mi)( w_1 - w_2 ) (  \bTh_i(w_2) - \bTh_i(w_1) ) \bA_j(z)
\qquad 
\text{ if }a_{ij}=-1. 
\end{align} 
\enDefi 

\noeqref{eq: drel2} 
\noeqref{eq: drel3}
\noeqref{eq: drel4}

The algebra $\Ui_\mu$ defined above is the same as the one introduced in \cite{LP-gklo}, but there are some differences in the presentation. Our series $\Ths{i}{z}$ corresponds to $(q-q\mi)\mi\KK_i\aThs{i}{z}$ from \emph{op.\ cit}. The equivalence of \eqref{eq: drel5} and \cite[(2.5)]{LP-gklo} was shown in \cite[\S 4.2.3]{LP-gklo} while the equivalence of \eqref{eq: drel6} and \cite[(2.6)]{LP-gklo} was demonstrated in \cite[Lemmata 4.2, 4.5]{SW-C} (see also \cite[Lemma 2.3]{LWW-i}). We will refer to \eqref{eq: drel6} as the $\imath$Drinfeld--Serre relations. 

\subsubsection{Relations term-by-term I} 

Relations \eqref{eq: drel1}--\eqref{eq: drel5} from Definition \ref{defi: shifted iqsp} can be written in terms of modes, using 
\[
(q-q\mi)  \Ths{i}{z} z^{\mu} = \KK_i \exp \left( (q-q^{-1}) \sum_{r \geq 1}^\infty \Hg_{i,r} z^r \right), 
\] 
in the following way: 
\begin{align} 
\ \ \ \ P_{i,j}(m,n): && [\Theta_{i, m},\Theta_{j, n}] &=0, \label{eq: grel1} \\
\ \ \ \ Q_{i,j}(m,r): & & [H_{i,m}, A_{j,r}] &= \textstyle\frac{[ma_{ij}]}{m} (A_{j,r+m} - A_{j,r-m}\CCC^m), \label{eq: grel2}  \\ 
\label{eq: grel3a}
\ \ \ \ R_{i,j}(r,s): &  & [\Ag_{j, s}, \Ag_{i, r}] 
&= 0 \qquad \text{ if }a_{ij}=0,\\
\label{eq: grel3b}
\ \ \ \ R_{i,j}(r,s): & & [\Ag_{j, s+1}, \Ag_{i, r}]_{q^{-1}} + [\Ag_{i, r+1}, \Ag_{j, s}]_{q^{-1}}
&= 0 \qquad \text{ if }a_{ij}=-1, \\ 
\label{eq: grel3c}
\ \ \ \ R_{i,i}(r,s): &  &  [\Ag_{i, s+1}, \Ag_{i, r}]_{q^{2}} + [\Ag_{i, r+1}, \Ag_{i, s}]_{q^{2}}
&= -\CCC^r\Thg_{i, s-r+1} + \CCC^{r+1}\Thg_{i, s-r-1}  \\ \notag
&\qquad& &\quad  - \CCC^s\Thg_{i, r-s+1} + \CCC^{s+1}\Thg_{i, r-s-1}.
\end{align} 

Denote the LHS of $R_{i,j}(r,s)$ as $R_{L,i,j}(r,s)$, and abbreviate $\btheta_{i, m} = \Theta_{i,m} - \delta_{m \neq 2} \CCC \Theta_{i,m-2}$. 
Relations \eqref{eq: grel2} can also be written equivalently (c.f.\ Lemma \ref{lem: Y-H equiv}, \cite[Proposition 3.12]{lu-wang-21}) as  
\begin{align} \label{eq: Yijms1}
Y_{i,j}(m,s) &:= 
[\Theta_{i,m+1}, A_{j,s}] + [\Theta_{i,m-1}, A_{j,s}] \Ck \\ 
&\quad + q^{-a_{ij}}\left( [A_{j,s+1}, \Theta_{i,m}]_{q^{2a_{ij}}} -  [\Theta_{i,m}, A_{j,s-1}]_{q^{2a_{ij}}}\Ck \right) = 0. 
\end{align}
Set $\overline{Y}_{i,j}(m,s) = Y_{i,j}(m,s) - \delta_{m\neq2}\CCC Y_{i,j}(m{-}2,s)$. If $\mu \in \Lambda_-$ then 
\begin{align} \label{eq: Yijms}
\overline{Y}_{i,j}(m,s) &= 
[\btheta_{i,m+1}, A_{j,s}] + [\btheta_{i,m-1}, A_{j,s}] \Ck \\
&\quad + q^{-a_{ij}}\left( [A_{j,s+1}, \btheta_{i,m}]_{q^{2a_{ij}}} -  [\btheta_{i,m}, A_{j,s-1}]_{q^{2a_{ij}}}\Ck \right). 
\end{align}  

\subsubsection{Relations term-by-term II} 

We next consider the $\imath$Drinfeld--Serre relation \eqref{eq: drel6}. 
Take $i,j\in \indx$ with $a_{ij}=-1$. 
Let us first reproduce \cite[(2.6)]{LP-gklo} in the conventions of the present paper: 
\begin{align}
&\operatorname{Sym}_{w_1,w_2} 
[\bA_i(w_1), [\bA_i(w_2), \bA_j(z)]_{q^{-1}}]_q = \\ \label{eq: LP Serre} 
&\qquad \qquad =-\bDel(w_1w_2)\operatorname{Sym}_{w_1,w_2} \Bigg[ \frac{1-\Ck w_2^2}{1 - q^{-2} \Ck w_2^2}
\Bigg(
\frac{[2]zw_1^{-1}}{1-q^2w_2w_1^{-1}}
[\bTh_i(w_2),\bA_j(z)]_{q^{-2}}
\\
&\hspace{160pt}
+\frac{1+w_2w_1^{-1}}{1-q^2w_2w_1^{-1}}
[\bA_j(z),\bTh_i(w_2)]_{q^{-2}}
\Bigg) \Bigg]. 
\end{align}
To write the $\imath$Drinfeld--Serre relation \eqref{eq: drel6} term-by-term, we need to introduce some notation. Set
\begin{align} \label{eq: Sij defi}
S_{L,i,j}(r,s;t) &= [A_{i,r},[A_{i,s},A_{j,t}]_{q^{-1}}]_q +[A_{i,s},[A_{i,r},A_{j,t}]_{q^{-1}}]_q. 
\end{align}
Clearly, $S_{L,i,j}(r,s;t) = S_{L,i,j}(s,r;t)$. Moreover, set 
\begin{align} \label{eq: SRij}
S_{R,i,j}(r,s;t) &= -2^{\delta_{r,s}}\Ck^k
\Bigg(
[A_{j,t},\Theta_{i,d}]_{q^{-2}} + \sum_{b\geq1}(q^{2b}+q^{-2b})\Ck^b
[A_{j,t},\Theta_{i,d-2b}]_{q^{-2}}\\
&\quad +\sum_{b\geq0}(q^{2b+1}+q^{-2b-1})\Ck^{b+1}
[\Theta_{i,d-2b-1},A_{j,t-1}]_{q^{-2}}
\Bigg), 
\end{align} 
with $d=|r-s|$ and $k=\min(r,s)$. Since $\bTh_{i}(z)$ is bounded from below, the sums above are finite. 
Then, via \eqref{eq: LP Serre} (and straightforward coefficient extraction), \eqref{eq: drel6} is equivalent to 
\begin{align} \label{eq: Serre modewise}
S_{i,j}(r,s;t): &&  S_{L,i,j}(r,s;t) &= S_{R,i,j}(r,s;t). 
\end{align} 

It is immediate that expressions $S_{R,i,j}(r,s;t)$ have the following properties: 
\begin{align} \label{eq: Sprop1}
S_{R,i,j}(r,s;t) &= S_{ij}(s,r;t), \\ \label{eq: Sprop2}
S_{R,i,j}(r,s;t) &= \Ck S_{ij}(r-1,s-1;t), \\ \label{eq: Sprop3}
S_{R,i,j}(r,r;t) &= -2q\mi \delta_{\mu_i,0} \Ck^r \KK_i A_{j,t}, \\
\label{eq: Sprop4}
S_{R,i,j}(r,s;t) &= 0 \qquad (|r-s| < - \mu_i). 
\end{align} 
We will also need the following Lemma. 

\Lem \label{lem: Serre prekey lem}
For any $r,s,t\in\mathbb Z$, the following hold, assuming only relation \eqref{eq: Yijms1}: 
\begin{align} \label{eq: preLem iSerre1}
S_{R,i,j}(r{+}1,s;t)+S_{R,i,j}(r,s{+}1;t) &= [2]S_{R,i,j}(r,s;t{+}1) \\
&\quad - 2^{\delta_{r,s}}\Ck^{\min(r,s)}[\overline{\Theta}_{i,|r-s|+1},A_{j,t}]_{q^{-2}}, \\ \label{eq: preLem iSerre2}
S_{R,i,j}(r{-}1,s;t)+S_{R,i,j}(r,s{-}1;t) &= [2]S_{R,i,j}(r,s;t{-}1)  \\
&\quad -2^{\delta_{r,s}} \Ck^{\min(r,s)-1}
[A_{j,t},\overline{\Theta}_{i,|r-s|+1}]_{q^{-2}}.
\end{align}
\enlem 

\begin{proof} 
We will only prove the first identity explicitly as the other one is analogous. 
Given \eqref{eq: Sprop1}, we may assume, without loss of generality, that $s\geq r$. Set $d=s-r$ and abbreviate 
\begin{align}
G_d(t)
={}&[A_{j,t},\Theta_{i,d}]_{q^{-2}} +\sum_{b\geq1}(q^{2b}+q^{-2b})\Ck^b
[A_{j,t},\Theta_{i,d-2b}]_{q^{-2}}\\
&\quad +\sum_{b\geq0}(q^{2b+1}+q^{-2b-1})\Ck^{b+1}
[\Theta_{i,d-2b-1},A_{j,t-1}]_{q^{-2}}.
\end{align}
Then \eqref{eq: SRij} can be reformulated as 
\[
S_{R,ij}(r,s;t)
=-2^{\delta_{r,s}} \Ck^rG_d(t).  
\] 
Using the elementary identities 
\begin{align}
(q^{2b+1}+q^{-2b-1}) + (q^{2b-1}+q^{-2b+1}) &= 2^{-\delta_{b,0}} [2](q^{2b} + q^{-2b}),\\
q^{2b}+q^{-2b} + (q^{2b-2}+q^{-2b+2}) 
&=[2](q^{2b-1}+q^{-2b+1}), 
\end{align}
for $b \geq 0$, we get 
\begin{align} \label{eq:serre-coefficient-identity}
G_{d+1}(t) + \Ck G_{d-1}(t) &= [2]G_d(t+1) + [\overline{\Theta}_{i,d+1}, A_{j,t}]_{q^{-2}} \\
&\quad -2^{-\delta_{b,0}}q\mi [2] \sum_{b\geq0} (q^{2b} + q^{-2b}) \Ck^b Y_{i,j}(d-2b,t).
\end{align} 
Since $Y_{i,j}(d-2b,t)=0$ by \eqref{eq: Yijms1}, equality 
\eqref{eq:serre-coefficient-identity} gives
\begin{equation}
\begin{aligned}
G_{d+1}(t) + \Ck G_{d-1}(t) = [2]G_d(t+1) + [\Theta_{i,d+1} - \Ck \Theta_{i,d-1},
A_{j,t}]_{q^{-2}}, 
\end{aligned}
\label{eq:serre-G-recurrence}
\end{equation}
where we set $G_{-1}(t)=0$. Since 
\begin{align}
S_{R,ij}(r+1,s;t)+S_{R,ij}(r,s+1;t) &= - \Ck^r\bigl(\Ck G_{d-1}(t)+G_{d+1}(t)\bigr), 
\end{align} 
equality \eqref{eq:serre-G-recurrence} now immediately implies \eqref{eq: preLem iSerre1}.  
\end{proof}

\subsubsection{Central reductions}
\label{sss: central reduct} 

As we connect shifted quantum affine symmetric pair algebras to shifted quantum affine algebras in \S \ref{sec: sub real1}--\ref{sec: sub real2}, it will be more convenient to work with central reductions $\Ui_\mu$. 
Let $\boldsymbol\gamma = (\gamma_1, \cdots, \gamma_{N}, C) \in (\C^\times)^{N+1}$, where $N = |\indx|$.   
We define $\ui_{\mu,\boldsymbol\gamma}$ to be the quotient of $\Ui_\mu$ by the two-sided ideal generated by 
\[
\KK_i^{\pm1} - \gamma_i^{\pm1} \quad (i \in \indx), \qquad \Ck^{\pm1} - C^{\pm1}. 
\]

\subsection{Levendorskii presentation} 
\label{subsec: gen Lev presen}

Recall that we abbreviated $\rho = (q-q\mi)\mi$. 
Fix an \emph{anti-dominant} coweight $\mu \in \Lambda_-$. 
Define $\Vi_\mu$ to be the algebra generated by
\begin{align} \label{eq: Lev gen set g} 
A_{i,r} \ (\mu_i{-}1 \leq r \leq 0), \quad H_{i,1}, \quad \KK_i^{\pm1}, \quad  \Ck^{\pm1}, 
\end{align} 
where $i \in \indx$, 
with $\KK_i^{\pm1}, \Ck^{\pm1}$ central, 
subject to the following three groups of relations. Group 1 relations  
\begin{align} 
[H_{i,1}, H_{j,1}] &= 0, \label{eq: Urel0} \\
[\Hg_{i,1}, \Ag_{j,r}] &= \textstyle [a_{ij}] (\Ag_{j,r+1}- \Ag_{j,r-1}\CCC), \label{eq: Urel1} \\
[\Ag_{i,r}, \Ag_{j,s}] &= 0 \quad \mbox{if} \quad a_{ij} = 0,  \label{eq: Urel2} \\ \label{eq: Urel3a} 
[\Ag_{j,s+1}, A_{i,r}]_{q^{a_{ij}}} + [\Ag_{i,r+1}, \Ag_{j,s}]_{q^{a_{ij}}} &= 0  \quad \mbox{if} \quad a_{ij}=-1,
\end{align}
\begin{align} \label{eq: Urel3} \ \ \ \ \ 
[\Ag_{i,s+1}, A_{i,r}]_{q^{2}} + [\Ag_{i,r+1}, \Ag_{i,s}]_{q^{2}} 
&= \left\{ \begin{array}{l l}
- 2^{\delta_{\mu_i,0}} \KK_i \Ck^{\min(r,s)} H_{i,1} & \mbox{if } |r{-}s| = {-}\mu_i, \\[2pt] 
- 2^{\delta_{\mu_i,-1}} \rho \KK_i \Ck^{\min(r,s)} & \mbox{if } |r{-}s| = {-}\mu_i{-}1 , \\[2pt] 
0 & \mbox{otherwise}, 
\end{array} \right. 
\end{align}
hold for any indices $r,s$ such that all the generators involved are in \eqref{eq: Lev gen set g}. Group 2 relations are special relations which depend on whether or not $\mu_i,\mu_j = 0$: 
\begin{align} \label{eq: sprel1}
[H_{i,1}, \Theta_{i,2-\mu_i}] &= 0 \qquad  &&(\mbox{if } \mu_i < 0), \\  \label{eq: sprel3}
[A_{i,1}, A_{i,0}]_{q^2} &= - \KK_i H_{i,1} \qquad   &&(\mbox{if } \mu_i = 0), \\ \label{eq: sprel4}
[A_{i,-1}, A_{i,-2}]_{q^2} &= - \KK_i \Ck^{-2} H_{i,1} \qquad &&(\mbox{if } \mu_i = 0), \\ \label{eq: sprel2}
\ \ \ \ \ \  [A_{j,-1}, A_{i,-1}]_{q\mi} + [A_{i,0}, A_{j,-2}]_{q\mi} &= 0 \qquad &&(\mbox{if } \mu_j = 0, \ a_{ij} = -1), \\ \label{eq: sprel5}
\ \ \ \ \ \ [A_{j,-1}, A_{i,-2}]_{q\mi} + [A_{i,-1}, A_{j,-2}]_{q\mi} &= 0 \qquad &&(\mbox{if } \mu_i \mbox{ or } \mu_j = 0, \ a_{ij} = -1), 
\end{align} 
with $\Theta_{i, 2-\mu_i}$ defined below via \eqref{eq: gJ(s)}, and $A_{i,1}, A_{i,-2}$ defined below via \eqref{eq: Air defi g}. Group 3 consists of the $\imath$Serre relations 
\begin{align} \label{eq: Serrel}
S_{L,i,j}(0,0;0) &= -2q^{-1}\delta_{\mu_i,0}\KK_iA_{j,0}, \\ \label{eq: Serrel2}
S_{L,i,j}(0,0;-1) &= -2q^{-1}\delta_{\mu_i,0}\KK_iA_{j,-1}, \\ 
\label{eq: Serrel3}
S_{L,i,j}(-1,-1;0) &= -2q^{-1}\delta_{\mu_i,0}\Ck\mi \KK_iA_{j,0}, \\ \label{eq: Serrel4}
S_{L,i,j}(-1,-1;-1) &= -2q^{-1}\delta_{\mu_i,0}\Ck\mi \KK_iA_{j,-1}. 
\end{align}

Inductively define 
\begin{align}
\label{eq: Air defi g} A_{i,r} &= 
\left\{ \begin{array}{l l}
	[2]\mi [H_{i,1}, A_{i,r-1}] + \Ck A_{i,r-2} & \mbox{if } r > 0, \\[2pt]  
	\Ck\mi A_{i,r+2} - [2]\mi \Ck\mi [H_{i,1}, A_{i,r+1}]   & \mbox{if } r < \mu_i - 1, 
\end{array} \right. \\[2pt]  \label{eq: gJ(s)}
\Theta_{i,s} &= -\Ck^{\lceil \frac{s+1}{2} \rceil} R_{L,i,i}(-\lceil \textstyle\frac{s+1}{2} \rceil, \lfloor \textstyle\frac{s-3}{2} \rfloor) + \Ck \Theta_{i, s - 2}, 
\end{align} 
for $s > 1 - \mu_i$. 

\Thm \label{thm: Lev presentation gen}
There is an algebra isomorphism
\[
\Vi_\mu \to \Ui_\mu, 
\]
sending the Levendorskii generators to the corresponding Drinfeld generators (denoted in the same way). 
\enthm

\Proof
The idea of the proof is the same as that of Theorem \ref{thm: 1Lev}. We need to check that all the Drinfeld relations also hold in $\Vi_\mu$. This is done in section \S \ref{sec: gen Levendorskii proof} below. 
\enproof

\subsection{Proof I}
\label{sec: gen Levendorskii proof}

In this subsection, we will establish the relations \eqref{eq: grel1}--\eqref{eq: grel3c}, i.e., all the relations with the exception of the $\imath$Drinfeld--Serre relations. 
We have already derived all the non-mixed (i.e.\ $i=j$) relations via Theorem \ref{thm: 1Lev} and Proposition \ref{pro: no shift Lev}. By \eqref{eq: Urel0} and \eqref{eq: Air defi g}, we get 
\eq \label{eq: H1 rule gen}
[H_{i,1}, A_{j,r}] = [a_{ij}] ( A_{j,r+1} - \Ck A_{j, r-1} ), 
\eneq
for all $r \in \Z$.

\subsubsection{Relations $\eqref{eq: grel3a}$--$\eqref{eq: grel3b}$} 
\label{ss: Rij rel proof}

In this subsection, we will assume $r \leq s$ when considering the relations $R_{i,j}(r,s)$. The opposite case $r \geq s$ can be obtained through a simultaneous swap $r \leftrightarrow s$ and $i \leftrightarrow j$. 

Let $a_{ij} = 0$ so that $R_{L,i,j}(r,s) =  [\Ag_{j, s}, \Ag_{i, r}]$. 
Then, by \eqref{eq: H1 rule gen}, 
\begin{align}
[H_{i,1}, R_{L,i,j}(r,s)] &= [2](R_{L,i,j}(r+1,s) - \Ck R_{L,i,j}(r-1,s)), \label{eq: H1 a0 rule1} \\
[H_{j,1}, R_{L,i,j}(r,s)] &= [2](R_{L,i,j}(r,s+1) - \Ck R_{L,i,j}(r,s-1)). \label{eq: H1 a0 rule2}
\end{align} 
These equalities, together with $R_{i,j}(r,s)$, $R_{i,j}(r,s-1)$ and $R_{i,j}(r+1,s)$, jointly imply $R_{L,i,j}(r-1,s) = 0 = R_{L,i,j}(r,s+1)$. 

Let $a_{ij} = -1$. 
Write $R_{L,i,j}(r,s) =  [\Ag_{j, s+1}, \Ag_{i, r}]_{q^{-1}} + [\Ag_{i, r+1}, \Ag_{j, s}]_{q^{-1}}$. 
Then, by \eqref{eq: H1 rule gen}, 
\begin{align}
[H_{i,1}, R_{L,i,j}(r,s)] &= [2](R_{L,i,j}(r+1,s) - \Ck R_{L,i,j}(r-1,s)) \label{eq: H1 a-1 rule1} \\ 
&\quad - (R_{L,i,j}(r,s+1) - \Ck R_{L,i,j}(r,s-1)), \\
[H_{j,1}, R_{L,i,j}(r,s)] &= - (R_{L,i,j}(r+1,s) - \Ck R_{L,i,j}(r-1,s)) \label{eq: H1 a-1 rule2} \\ 
&\quad + [2](R_{L,i,j}(r,s+1) - \Ck R_{L,i,j}(r,s-1)). 
\end{align} 
These equalities, together with the non-degeneracy of the matrix $\begin{bmatrix}
[2] & -1 \\
-1 & [2]
\end{bmatrix}$, show that $R_{i,j}(r,s)$, $R_{i,j}(r,s-1)$ and $R_{i,j}(r+1,s)$ jointly imply $R_{L,i,j}(r-1,s) = 0 = R_{L,i,j}(r,s+1)$. 

In both ($a_{ij} = 0$ and $a_{ij}=-1$) cases, the arguments above justify an induction where the knowledge of all relations $R_{i,j}(r,s)$ for $|s-r| \leq k$ (for $k \geq 1$) allows us to deduce all relations $R_{i,j}(r,s)$ for $|s-r| = k+1$. Note that this induction differs from the case $i=j$ - there we relied on the knowledge of one relation $R_{i,i}(r,s)$ with $|s-r| = k+1$ given by the definition of $\Theta_{k+2-\mu}$. The non-degeneracy argument used above can be seen as a replacement for this relation. 

It remains to establish the base case, i.e., the cases $R_{i,j}(r,s)$ for $|s-r| \leq 2$. This will be achieved via a nested induction. The argument is similar to the one in \S \ref{ss: early B}, but we do not need an analogue of Lemma \ref{lem: H2 comm}. First let $a_{ij} = 0$. Irrespectively of the shifts $\mu_i, \mu_j$, we know from \S \ref{subsec: gen Lev presen} that at least the relations in Figure \ref{fig: G-Bind1} hold. Using \eqref{eq: H1 a0 rule1}--\eqref{eq: H1 a0 rule2}, with $(r,s) = (-2,-1)$, we can establish all the relations in Figure \ref{fig: G-Bind2}. 

\begin{figure}[h]
\begin{tikzcd}[row sep=-10pt]
	{R_{i,j}(0,0)} && \\
	& {R_{i,j}(-1,0)} \\
	{R_{i,j}(-1,-1)} && 
\end{tikzcd}
\captionof{figure}{Instances of relations $R_{i,j}(r,s)$ which always hold, independently of $\mu_i, \mu_j$, if $a_{ij}=0$. } 
\label{fig: G-Bind1}
\end{figure} 

\begin{figure}[h]
\begin{tikzcd}[row sep=-7pt]
	{R_{i,j}(0,0)} && \textcolor{rgb,255:red,92;green,214;blue,92}{{R_{i,j}(-1,1)}} \\
	& \textcolor{rgb,255:red,69;green,89;blue,237}{{R_{i,j}(-1,0)}} \\
	{R_{i,j}(-1,-1)} && \textcolor{rgb,255:red,92;green,214;blue,92}{{R_{i,j}(-2,0)}} 
	\arrow[color={rgb,255:red,92;green,214;blue,92}, from=2-2, to=1-3]
	\arrow[color={rgb,255:red,69;green,89;blue,237}, no head, from=2-2, to=1-1]
	\arrow[color={rgb,255:red,69;green,89;blue,237}, no head, from=3-1, to=2-2]
	\arrow[color={rgb,255:red,92;green,214;blue,92}, from=2-2, to=3-3]
\end{tikzcd}
\captionof{figure}{Propagation using \eqref{eq: H1 a0 rule1}--\eqref{eq: H1 a0 rule2}. Black and blue relations are assumed to be known. The commutator is applied to the blue relation. The blue lines connect to the other relevant known relations. The green relations, also indicated by the green arrow, are deduced.} 
\label{fig: G-Bind2}
\end{figure}

This establishes the base of the nested induction. All the desired relations can now be obtained using upward and downward induction, with upward inductive hypothesis ($s \geq 0$)
\begin{align}
\operatorname{IH}^\uparrow(s):&\qquad& R_{i,j}(s,s), \quad R_{i,j}(s-1,s), \quad R_{i,j}(s-1,s+1), 
\end{align}
and downward inductive hypothesis ($s \leq -1$) 
\begin{align}
\operatorname{IH}^\downarrow(s):&\qquad& R_{i,j}(s,s), \quad R_{i,j}(s,s+1), \quad R_{i,j}(s-1,s+1). 
\end{align}
In the upward case, the inductive hypothesis $\operatorname{IH}^\uparrow(s)$, together with \eqref{eq: H1 a0 rule1}, yields $R_{i,j}(s,s+1)$. 
Then \eqref{eq: H1 a0 rule1}--\eqref{eq: H1 a0 rule2} applied to $R_{i,j}(s,s+1)$ yield $R_{i,j}(s+1, s+1)$ and $R_{i,j}(s,s+2)$, which establishes $\operatorname{IH}^\uparrow(s+1)$. These steps are illustrated in Figures \ref{fig: G-Bind3}--\ref{fig: G-Bind4} below.  The downward induction is analogous. 

\begin{figure}[h]
\begin{tikzcd}[row sep=-7pt]
	& \textcolor{rgb,255:red,92;green,214;blue,92}{{R_{i,j}(s,s+1)}} \\
	\textcolor{rgb,255:red,69;green,89;blue,237}{{R_{i,j}(s,s)}} && {R_{i,j}(s-1,s+1)} \\
	& {R_{i,j}(s-1,s)} 
	\arrow[color={rgb,255:red,92;green,214;blue,92}, from=2-1, to=1-2]
	\arrow[color={rgb,255:red,69;green,89;blue,237}, no head, from=3-2, to=2-1]
\end{tikzcd}
\captionof{figure}{Propagation using \eqref{eq: H1 a0 rule1}.}
\label{fig: G-Bind3}
\end{figure}

\begin{figure}[h]
\begin{tikzcd}[row sep=-7pt]
	\textcolor{rgb,255:red,92;green,214;blue,92}{{R_{i,j}(s+1,s+1)}} && \textcolor{rgb,255:red,92;green,214;blue,92}{{R_{i,j}(s,s+2)}} \\
	& \textcolor{rgb,255:red,69;green,89;blue,237}{{R_{i,j}(s,s+1)}} \\
	{R_{i,j}(s,s)} && {R_{i,j}(s-1,s+1)} \\
	& {R_{i,j}(s-1,s)} 
	\arrow[color={rgb,255:red,92;green,214;blue,92}, from=2-2, to=1-3]
	\arrow[color={rgb,255:red,92;green,214;blue,92}, from=2-2, to=1-1]
	\arrow[color={rgb,255:red,69;green,89;blue,237}, no head, from=3-1, to=2-2]
	\arrow[color={rgb,255:red,69;green,89;blue,237}, no head, from=3-3, to=2-2]
\end{tikzcd}
\captionof{figure}{Propagation using \eqref{eq: H1 a0 rule1}--\eqref{eq: H1 a0 rule2}.}
\label{fig: G-Bind4}
\end{figure}

Now let $a_{ij} = -1$. 
Irrespectively of the shifts $\mu_i, \mu_j$, we know from \S \ref{subsec: gen Lev presen} (in particular, \eqref{eq: sprel2}--\eqref{eq: sprel5}) that at least the relations in Figure \ref{fig: G-Bind5} hold. 
Moreover, the identities \eqref{eq: H1 a-1 rule1}--\eqref{eq: H1 a-1 rule2}, together with the non-degeneracy argument, show that $R_{i,j}(r,s)$ implies 
\eq \label{eq: H1 a-1 rule3}
R_{L,i,j}(r+1,s) = \Ck R_{L,i,j}(r-1,s), \qquad R_{L,i,j}(r,s+1) = \Ck R_{L,i,j}(r,s-1). 
\eneq
Therefore, the diagram in Figure \ref{fig: G-Bind5} can be completed to the one in Figure \ref{fig: G-Bind6}. 
The rest of the argument proceeds in exactly the same fashion as in the $a_{ij}=0$ case. 

\begin{figure}[h]
\begin{tikzcd}[row sep=-10pt]
	{R_{i,j}(-1,-1)} && \\
	& {R_{i,j}(-2,-1)} \\
	{R_{i,j}(-2,-2)} && 
\end{tikzcd}
\captionof{figure}{Instances of relations $R_{i,j}(r,s)$ which always hold, independently of $\mu_i, \mu_j$, if $a_{ij}=-1$. } 
\label{fig: G-Bind5}
\end{figure}

\begin{figure}[h]
\begin{tikzcd}[row sep=-7pt]
	{R_{i,j}(-1,-1)} && \textcolor{rgb,255:red,92;green,214;blue,92}{{R_{i,j}(-2,0)}} \\
	& \textcolor{rgb,255:red,69;green,89;blue,237}{{R_{i,j}(-2,-1)}} \\
	{R_{i,j}(-2,-2)} && \textcolor{rgb,255:red,92;green,214;blue,92}{{R_{i,j}(-3,-1)}} 
	\arrow[color={rgb,255:red,92;green,214;blue,92}, from=2-2, to=1-3]
	\arrow[color={rgb,255:red,69;green,89;blue,237}, no head, from=2-2, to=1-1]
	\arrow[color={rgb,255:red,69;green,89;blue,237}, no head, from=3-1, to=2-2]
	\arrow[color={rgb,255:red,92;green,214;blue,92}, from=2-2, to=3-3]
\end{tikzcd}
\caption{Propagation using \eqref{eq: H1 a-1 rule3}.}
\label{fig: G-Bind6}
\end{figure}

\subsubsection{Relation $\eqref{eq: grel2}$}

We will show the equivalent relation $\overline{Y}_{i,j}(m,s) = 0$ (see \eqref{eq: Yijms}), using \eqref{eq: grel3a}--\eqref{eq: grel3c}, which have already been established. Note that, by 
\eqref{eq: grel3c}, each $\btheta_{i,a}$ in $\overline{Y}_{i,j}(m,s)$ can be expressed in terms of some $A_{i,b}$. If $a_{ij} = 0$ then \eqref{eq: grel3a} implies that each of these $A_{i,b}$ commutes with the $A_{j,c}$ occurring in $\overline{Y}_{i,j}(m,s)$. This immediately entails that $\overline{Y}_{i,j}(m,s) = 0$. So let $a_{ij} = -1$. We have the following analogue of Lemma \ref{lem: key id}. 

\Lem \label{lem: key lem gen}
For any $r,t,s \in \Z$, the following identity holds:
\begin{align}
&[R_{L,i,i}(r,t),A_{j,s}]+[R_{L,i,i}(r+1,t-1),A_{j,s}]  \\
&+q[A_{j,s+1},R_{L,i,i}(r,t-1)]_{q^{-2}}-q[R_{L,i,i}(r+1,t),A_{j,s-1}]_{q^{-2}} =\\
=& \
[A_{i,t},R_{L,i,j}(r,s)]_{q^3}+q[R_{L,i,j}(t,s),A_{i,r}]_q \label{eq: key lemma gen id} \\
&+[A_{i,r+1},R_{L,i,j}(t-1,s)]_{q^3}+q[R_{L,i,j}(r+1,s),A_{i,t-1}]_q\\
&-q[A_{i,t+1},R_{L,i,j}(r,s-1)]_q+q^3[A_{i,r+1},R_{L,i,j}(t,s-1)]_{q^{-3}}\\
&-q[A_{i,r+2},R_{L,i,j}(t-1,s-1)]_q+q^3[A_{i,t},R_{L,i,j}(r+1,s-1)]_{q^{-3}} \\ 
=& \ 0. 
\end{align}
\enlem 

\Proof
The first equality is a general algebraic identity which also holds in the free non-commutative algebra. The second equality then follows immediately from \eqref{eq: grel3b}. The rest of the proof is devoted to proving the first equality. 
For the sake of legibility readability, we abbreviate 
\[
X_a=A_{i,a},\qquad Z_b=A_{j,b}.
\]
We will use the following elementary identities, which hold for arbitrary elements $a,b,c$ of a free non-commutative  algebra: 
\begin{align}
q[c,[a,b]_{q^2}]_{q^{-2}} &= [a,[c,b]_{q^{-1}}]_{q^3} + q[[c,a]_{q^{-1}},b]_q, \label{eq: tag1} \\ 
-q[[a,b]_{q^2},c]_{q^{-2}} &= -q[a,[b,c]_{q^{-1}}]_q + q^3[b,[a,c]_{q^{-1}}]_{q^{-3}}, \label{eq: tag2} \\ 
[[a,b]_{q^2},c] &= q[[a,c]_{q^{-1}},b]_q - q[a,[c,b]_{q^{-1}}]_q, \label{eq: tag3} \\ 
[[a,b]_{q^2}+[b,a]_{q^2},c] &= [a,[b,c]_{q^{-1}}]_{q^3} + [b,[a,c]_{q^{-1}}]_{q^3} \label{eq: tag4} \\
&\quad + q^3[a,[c,b]_{q^{-1}}]_{q^{-3}} + q^3[b,[c,a]_{q^{-1}}]_{q^{-3}}.  
\end{align} 
All the four identities above follow by direct expansion and comparison of monomials on both sides. Let $\RHS$ denote the right-hand side of \eqref{eq: key lemma gen id}. 

Below we will separately treat terms containing $Z_{s+1}$, $Z_s$, and $Z_{s-1}$. 
Applying \eqref{eq: tag1}, we get 
\begin{align}
q[Z_{s+1},R_{L,i,i}(r,t-1)]_{q^{-2}} &= [X_t,[Z_{s+1},X_r]_{q^{-1}}]_{q^3}
+ q[[Z_{s+1},X_t]_{q^{-1}},X_r]_q \\ \label{eq: otag1}
&\quad + [X_{r+1},[Z_{s+1},X_{t-1}]_{q^{-1}}]_{q^3} + q[[Z_{s+1},X_{r+1}]_{q^{-1}},X_{t-1}]_q.
\end{align} 
These will supply the monomials containing $Z_{s+1}$ in the first four commutators in $\RHS$. 
Next, applying \eqref{eq: tag2} yields 
\begin{align}
-q[R_{L,i,i}(r+1,t),Z_{s-1}]_{q^{-2}} &= -q[X_{t+1},[X_{r+1},Z_{s-1}]_{q^{-1}}]_q + q^3[X_{r+1},[X_{t+1},Z_{s-1}]_{q^{-1}}]_{q^{-3}} \\ \label{eq: otag2}
&\quad - q[X_{r+2},[X_t,Z_{s-1}]_{q^{-1}}]_q + q^3[X_t,[X_{r+2},Z_{s-1}]_{q^{-1}}]_{q^{-3}}.
\end{align} 
These will supply the monomials containing $Z_{s-1}$ in the last four commutators in $\RHS$. 
Next, applying \eqref{eq: tag3}--\eqref{eq: tag4} yields  
\begin{align}
[R_{L,i,i}(r,t),Z_s] &+ [R_{L,i,i}(r+1,t-1),Z_s] = \\  \label{eq: otag3}
&= q[[X_{t+1},Z_s]_{q^{-1}},X_r]_q - q[X_{t+1},[Z_s,X_r]_{q^{-1}}]_q \\ 
&\quad + [X_t,[X_{r+1},Z_s]_{q^{-1}}]_{q^3} + [X_{r+1},[X_t,Z_s]_{q^{-1}}]_{q^3} \\
&\quad + q^3[X_t,[Z_s,X_{r+1}]_{q^{-1}}]_{q^{-3}} + q^3[X_{r+1},[Z_s,X_t]_{q^{-1}}]_{q^{-3}} \\
&\quad + q[[X_{r+2},Z_s]_{q^{-1}},X_{t-1}]_q - q[X_{r+2},[Z_s,X_{t-1}]_{q^{-1}}]_q.
\end{align} 
Adding \eqref{eq: otag1}, \eqref{eq: otag2} and \eqref{eq: otag3} sidewise yields \eqref{eq: key lemma gen id}.  
\enproof

Choose any $n,x$ with $n-x=m$. By \eqref{eq: grel3c}, we have 
\begin{align}
R_{L,i,i}(x,n) &=-\Ck^x\overline\Theta_{i,m+1},\\
R_{L,i,i}(x,n-1) &= -\Ck^x\overline\Theta_{i,m},\\
R_{L,i,i}(x+1,n-1) &= -\Ck^{x+1}\overline\Theta_{i,m-1},\\
R_{L,i,i}(x+1,n) &= -\Ck^{x+1}\overline\Theta_{i,m}.
\end{align}
Therefore, 
\begin{align}
-\Ck^x\overline Y_{i,j}(m,s) ={}& [R_{L,i,i}(x,n),A_{j,s}] + [R_{L,i,i}(x+1,n-1),A_{j,s}] \\
&+q[A_{j,s+1},R_{L,i,i}(x,n-1)]_{q^{-2}} -q[R_{L,i,i}(x+1,n),A_{j,s-1}]_{q^{-2}}.
\end{align} 
The fact that the expression on the RHS above vanishes now follows directly from Lemma~\ref{lem: key lem gen}. 

\subsubsection{Relation \eqref{eq: grel1}} 

It suffices to prove the equivalent relation $[\overline{H}_{i,m}, \btheta_{j,n}] = 0$ for $i \neq j$ (c.f.\ \eqref{eq: Hm overline}).  
We have 
\begin{align}
- [\overline{H}_{i,m}, \btheta_{j,n}] &= [\overline{H}_{i,m}, R_{L,j,j}(0, n-1)] \\
&= 
R_{L,j,j}(m, n{-}1) + R_{L,j,j}(0, m{+}n{-}1) \\
&\quad - \Ck^m R_{L,j,j}({-}m, n{-}1) - \Ck^m R_{L,j,j}(0, n{-}m{-}1) = 0. 
\end{align}
The first and third equalities follow from \eqref{eq: grel3c}, while the second one follows from \eqref{eq: grel2}, both of which have already been established.

\subsection{Proof II: $\imath$Drinfeld--Serre relations} 
\label{sss: iSerre proof gen}

We will now derive \eqref{eq: Serre modewise}. We can assume all the relations \eqref{eq: grel1}--\eqref{eq: grel3c}, established in \S \ref{sec: gen Levendorskii proof}. For the rest of this subsection, we fix $i,j \in \indx$ with $a_{ij} = -1$. Recall that the definition of expression $S_{L,i,j}(r,s;t)$, which appears repeatedly below, was given in \eqref{eq: Sij defi}. 

\subsubsection{Key lemma} 

The following Lemma can be seen as somewhat analogous to Lemmata \ref{lem: key id} and \ref{lem: key lem gen}. 

\Lem \label{lem: Serre key lem}
Let $i,j\in \indx$ with $a_{ij}=-1$. Assume that $R_{i,j}(r,t)$ and $R_{i,i}(r,t)$ (i.e., \eqref{eq: grel3b}--\eqref{eq: grel3c}) hold for all $r,t\in\mathbb Z$. Then, for any $r,s,t\in\mathbb Z$: 
\begin{align} \label{eq: Lem iSerre1}
S_{L,i,j}(r{+}1,s;t)+S_{L,i,j}(r,s{+}1;t) &= [2]S_{L,i,j}(r,s;t{+}1) \\ 
&\quad - 2^{\delta_{r,s}}\Ck^{\min(r,s)}[\overline{\Theta}_{i,|r-s|+1},A_{j,t}]_{q^{-2}}, \\ \label{eq: Lem iSerre2}
S_{L,i,j}(r{-}1,s;t)+S_{L,i,j}(r,s{-}1;t) &= [2]S_{L,i,j}(r,s;t{-}1) \\ 
&\quad - 2^{\delta_{r,s}} \Ck^{\min(r,s)-1}
[A_{j,t},\overline{\Theta}_{i,|r-s|+1}]_{q^{-2}}.
\end{align}
\enlem

\begin{proof}
We only prove \eqref{eq: Lem iSerre1} explicitly since \eqref{eq: Lem iSerre2} is analogous. 
Let us first record two identities which hold for arbitrary elements $x,y,z,u$ in any non-commutative algebra:
\begin{align} \label{eq: Serre aux1}
[x,[y,z]_{q^{-1}}]_q + [y,[x,z]_{q^{-1}}]_q &= [[x,y]_{q^2},z]_{q^{-2}}
 + q[2][y,[x,z]_{q^{-1}}]_{q^{-1}}, \\ \label{eq: Serre aux2}
[y,[u,x]_{q^{-1}}]_{q^{-1}} + [x,[u,y]_{q^{-1}}]_{q^{-1}} &= -q^{-1}
 \bigl(
 [x,[y,u]_{q^{-1}}]_q
 +[y,[x,u]_{q^{-1}}]_q
 \bigr).
\end{align} 
Both identities follow by direct expansion. 
Applying \eqref{eq: Serre aux1} twice, we obtain
\begin{align}
S_{L,i,j}(r{+}1,s;t) &+ S_{L,i,j}(r,s{+}1;t) =
\bigl[ [A_{i,r+1},A_{i,s}]_{q^2} + [A_{i,s+1},A_{i,r}]_{q^2}, A_{j,t} \bigr]_{q^{-2}}\\ \label{eq: Serre aux3}
&
+q[2]\bigl(
 [A_{i,s},
   [A_{i,r+1},A_{j,t}]_{q^{-1}}
 ]_{q^{-1}} +[A_{i,r},
   [A_{i,s+1},A_{j,t}]_{q^{-1}}
 ]_{q^{-1}}
 \bigr).
\end{align}
We will next use the following instances of \eqref{eq: grel3b}: 
\[
[A_{i,r+1},A_{j,t}]_{q^{-1}}
=-[A_{j,t+1},A_{i,r}]_{q^{-1}}, \qquad 
[A_{i,s+1},A_{j,t}]_{q^{-1}}
=-[A_{j,t+1},A_{i,s}]_{q^{-1}}.
\]
Substituting these into \eqref{eq: Serre aux3}, we get 
\begin{align}
S_{L,i,j}(r+1,s;t) &+ S_{L,i,j}(r,s+1;t) = [R_{L,i,i}(r,s),A_{j,t}]_{q^{-2}} \\ \label{eq: Serre aux4}
 &- q[2]\bigl(
 [A_{i,s},
   [A_{j,t+1},A_{i,r}]_{q^{-1}}
 ]_{q^{-1}} 
+[A_{i,r},
   [A_{j,t+1},A_{i,s}]_{q^{-1}}
 ]_{q^{-1}}
 \bigr).
\end{align}
Applying \eqref{eq: Serre aux2} gives 
\begin{align} \label{eq: Serre aux5}
[A_{i,s}, [A_{j,t+1},A_{i,r}]_{q^{-1}}
 ]_{q^{-1}} 
+[A_{i,r},
  [A_{j,t+1},A_{i,s}]_{q^{-1}}
 ]_{q^{-1}} = -q^{-1}S_{L,i,j}(r,s;t+1).
\end{align}
Substituting \eqref{eq: Serre aux5} into \eqref{eq: Serre aux4} yields
\begin{align}
S_{L,i,j}(r+1,s;t)+S_{L,i,j}(r,s+1;t) = [R_{L,i,i}(r,s),A_{j,t}]_{q^{-2}} + [2]S_{L,i,j}(r,s;t+1). 
\end{align} 
Finally, applying \eqref{eq: grel3c} yields \eqref{eq: Lem iSerre1}. 
\end{proof}

\Cor \label{cor: 3rel}
Suppose that we know that two\footnote{Note that if $r=s$ then the first two relations in each set coincide thanks to the $r,s$-symmetry. In that case, it suffices to know one of the two unique relations.} 
 of the relations in $$\{ S_{i,j}(r+1,s;t), \ S_{i,j}(r,s+1;t), \ S_{i,j}(r,s;t+1) \}$$ or $$\{ S_{i,j}(r-1,s;t), \ S_{i,j}(r,s-1;t), \ S_{i,j}(r,s;t-1) \}$$ hold in $\Vi_\mu$. Then the corresponding third relation holds as well. 
\encor

\Proof
Since we have already established that \eqref{eq: grel1}--\eqref{eq: grel3c} hold  in $\Vi_\mu$, the Corollary follows directly from Lemmata \ref{lem: Serre prekey lem} and \ref{lem: Serre key lem}.  
\enproof

\subsubsection{Non-degeneracy argument} 

Let $m \geq 1$. 
It follows straightforwardly from the definitions and \eqref{eq: grel2} that 
\begin{align} \label{eq: Sij nondeg1}
[H_{i,m}, S_{L,i,j}(r,s;t)]
&= \textstyle\frac{[2m]}{m}  \operatorname{Sym}_{r,s}
\big( S_{L,i,j}(r+m,s;t) -\Ck^m S_{L,i,j}(r-m,s;t) \big)\\
&\quad - \textstyle\frac{[m]}{m} \big( S_{L,i,j}(r,s;t+m)-\Ck^m S_{L,i,j}(r,s;t-m) \big), \\ \label{eq: Sij nondeg2}
[H_{j,m},S_{L, i, j}(r,s;t)]
&= - \textstyle\frac{[m]}{m} \operatorname{Sym}_{r,s} \big( S_{L,i,j}(r+m,s;t) - \Ck^m S_{L,i,j}(r-m,s;t) \big) \\
&\quad + \textstyle\frac{[2m]}{m}  \big( S_{L,i,j}(r,s;t+m) -\Ck^m S_{L,i,j}(r,s;t-m) \big).
\end{align} 

\Cor \label{cor rel2}
Let $r,t \in \Z$ and $m \geq 1$. 
\be
\item Suppose that we know that $S_{i,j}(r,r;t)$ and one of the relations $$\{S_{i,j}(r+m,r;t), \ S_{i,j}(r-m,r;t) \}$$ holds in $\Vi_\mu$. Then the other relation holds as well. \label{cor rel2a}
\item Suppose that we know that $S_{i,j}(r,r;t)$ and one of the relations $$\{S_{i,j}(r,r;t+m), \ S_{i,j}(r,r;t-m) \}$$ holds in $\Vi_\mu$. Then the other relation holds as well. \label{cor rel2b}
\ee
\encor 

\Proof 
By $S_{i,j}(r,r;t)$ and \eqref{eq: Sprop3}, we have 
\[
S_{L,i,j}(r,r;t) = S_{R,i,j}(r,r;t) = -2q\mi \delta_{\mu_i,0} \Ck^r \KK_i A_{j,t} := D_r(t). 
\] 
Abbreviate 
\begin{align}
X &= S_{L,i,j}(r+m,r;t) - \Ck^m S_{L,i,j}(r-m,r;t), \\ 
Y &= S_{L,i,j}(r,r;t+m) - \Ck^m S_{L,i,j}(r,r;t-m), \\ 
Z &= D_r(t+m) - \Ck^m D_r(t-m). 
\end{align}
Using \eqref{eq: grel2}, we get 
\[
[H_{i,m}, D_r(t)] = - \textstyle\frac{[m]}{m} Y, \qquad [H_{j,m}, D_r(t)] = \textstyle\frac{[2m]}{m} Z.  
\] 
Therefore, \eqref{eq: Sij nondeg1}--\eqref{eq: Sij nondeg2} become 
\[
2[2m]X - [m]Y = -[m]Z, \qquad -2[m]X + [2m]Y = [2m]Z. 
\]
The non-degeneracy of the matrix $\begin{bmatrix}
2[2m] & -[m] \\
-2[m] & [2m]
\end{bmatrix}$ 
allows us to conclude that $X = 0$ and $Y = Z$. 

We can now prove the first part of the Corollary. 
Without loss of generality, suppose that we know $S_{i,j}(r-m,r;t)$ holds in $\Vi_\mu$. Then 
\begin{align}
S_{L,i,j}(r+m,r;t) = \Ck^m S_{L,i,j}(r-m,r;t) = \Ck^m S_{R,i,j}(r-m,r;t) = S_{R,i,j}(r+m,r;t), 
\end{align}
where the first equality follows from $X=0$ and the final equality follows from \eqref{eq: Sprop2}, establishing $S_{i,j}(r+m,r;t)$. For the second part, suppose we know $S_{i,j}(r,r;t-m)$ holds in $\Vi_\mu$. This, combined with $Y = Z$, implies that $$S_{L,i,j}(r,s;t+m) = D_r(t+m) = S_{L,i,j}(r,s;t+m),$$ where the final equality follows from \eqref{eq: Sprop3}. 
\enproof

\subsubsection{Base case} 
\label{sss: Serre base}

First observe that \eqref{eq: Serrel}--\eqref{eq: Serrel2} and Corollary \ref{cor rel2}(\ref{cor rel2b}) (with $m=1$) entail $S_{i,j}(0,0;1)$ and $S_{i,j}(0,0;-2)$. Repeating this process yields $S_{i,j}(0,0;t)$ for all $t \in \Z$. The same argument, using   \eqref{eq: Serrel3}--\eqref{eq: Serrel4}, shows that $S_{i,j}(-1,-1;t)$ holds for all $t \in \Z$. Corollary \ref{cor: 3rel} now allows us to deduce that the relations in Figure \ref{fig: Serre} hold. 
This forms the basis of a nested induction. 

\begin{figure}[h]
\begin{tikzcd}[row sep=-7pt]
	& {S_{i,j}(0,1;t)} & \\
	{S_{i,j}(0,0;t)} && {S_{i,j}(-1,1;t)} \\
	& {S_{i,j}(-1,0;t)} \\
	{S_{i,j}(-1,-1;t)} && {S_{i,j}(-2,0;t)} \\
	& {S_{i,j}(-2,-1;t)}
\end{tikzcd}
\caption{The base of the nested induction: $\operatorname{IH}^\uparrow(0)$ and $\operatorname{IH}^\downarrow(0)$.}
\label{fig: Serre}
\end{figure}

Given $r \geq 0$, consider the following upward inductive hypothesis: 
\begin{align}
\operatorname{IH}^\uparrow(r):&\qquad& S_{i,j}(r,r;t), \quad S_{i,j}(r,r+1;t), \quad S_{i,j}(r-1,r+1;t), \quad S_{i,j}(r-2,r;t) 
\end{align}
hold for all $t \in \Z$. The downward inductive hypothesis reads: 
\begin{align}
\operatorname{IH}^\downarrow(r):&\qquad& S_{i,j}(r{-}1,r{-}1;t), \quad S_{i,j}(r{-}2,r{-}1;t), \quad S_{i,j}(r{-}2,r;t), \quad S_{i,j}(r{-}1,r{+}1;t) 
\end{align} 
hold for all $t \in \Z$. 
In particular, in the previous paragraph, we established $\operatorname{IH}^\uparrow(0)$ and $\operatorname{IH}^\downarrow(0)$.  
We will now explain the upward induction step (the downward induction step is completely analogous). 
Assuming $\operatorname{IH}^\uparrow(r)$, we proceed in the following steps: 
\be
\item apply Corollary \ref{cor rel2}(\ref{cor rel2a}) (with $m=2$) to $S_{i,j}(r,r;t)$ to deduce $S_{i,j}(r,r+2;t)$, 
\item apply Corollary \ref{cor: 3rel} to $S_{i,j}(r,r+1;t)$ to deduce $S_{i,j}(r+1,r+1;t)$, 
\item apply Corollary \ref{cor: 3rel} to $S_{i,j}(r+1,r+1;t)$ to deduce $S_{i,j}(r+1,r+2;t)$. 
\ee 
This procedure yields $\operatorname{IH}^\uparrow(r+1)$. Overall, the upward and downward inductions entail that $S_{i,j}(r,s;t)$ holds for any $r,s,t \in \Z$ with $|r-s| \leq 2$.

\subsubsection{Induction} 

Given the $r,s$-symmetry of the expressions $S_{L,i,j}(r,s;t)$ and $S_{R,i,j}(s,r;t)$, we may assume, without loss of generality, that $s \geq r$. 
Take $m \geq 2$ and suppose that $S_{i,j}(r,s;t)$ holds in $\Vi_\mu$ for all $r,s,t \in \Z$ with $s-r \leq m$. Then Corollary \ref{cor: 3rel} immediately implies that $S_{i,j}(r,s+1;t)$ also holds for all $r, s, t \in \Z$ (with $r \leq s$), allowing the induction to proceed. We conclude that all the $\imath$Drinfeld--Serre relations $S_{i,j}(r,s;t)$ hold in $\Vi_\mu$, completing the derivation of \eqref{eq: Serre modewise} and the proof of Theorem \ref{thm: Lev presentation gen}.

\subsection{Redundancies in higher rank} 
\label{ss: redundancies in high rank}

In this subsection, let $\boldsymbol\Gamma$ be a connected simply-laced graph with at least two vertices. Thanks to the availability of $\imath$Drinfeld--Serre relations, some of the Group 2 Levendorskii relations from \S \ref{subsec: gen Lev presen} turn out to be redundant. Let $\widetilde{\mathfrak{U}}^\imath_\mu$ be the algebra defined by the same generators and relations as in \S \ref{subsec: gen Lev presen}, with the exception of relations \eqref{eq: sprel3}--\eqref{eq: sprel4} for every $i \in \indx$ such that $\mu_i = 0$ and there exists $j \in \indx$ with $a_{ij} = -1$ and $\mu_j \leq -1$.

\Thm \label{thm: redundancies in high rank}
There is an isomorphism $\widetilde{\mathfrak{U}}^\imath_\mu \cong \Vi_\mu$. 
\enthm 

\Proof 
Take any $i,j \in \indx$ with $a_{ij} = -1$, $\mu_i = 0$ and $\mu_j \leq -1$. 
Carefully inspecting the argument from \S \ref{sss: Serre base}, we observe that the derivation of $S_{i,j}(0,0;t)$ and $S_{i,j}(-1,-1;t)$, for all $t \in \Z$, only requires \eqref{eq: Serrel}--\eqref{eq: Serrel4} and \eqref{eq: H1 rule gen}, which itself follows from the definitions and \eqref{eq: Urel0}. 
Moreover, we claim that $S_{i,j}(-1,0;t)$ can be derived without using \eqref{eq: sprel3}--\eqref{eq: sprel4}.   
Indeed, tracing through the argument of Lemma \eqref{lem: Serre key lem}, and using only $R_{i,j}(r,t)$ and $R_{i,j}(s,t)$, we get 
\begin{align} \label{eq: Serre rec sp1}
 \ \ \ S_{L,i,j}(r+1,s;t)+S_{L,i,j}(r,s+1;t) = [R_{L,i,i}(r,s), A_{j,t}]_{q^{-2}} + [2]S_{L,i,j}(r,s;t+1). 
\end{align} 
Specializing to $r=-1=s$ and using $R_{i,i}(-1,-1)$, we get 
\[
2S_{L,i,j}(-1,0;t)
=[2]S_{L,i,j}(-1,-1;t+1)
-2\Ck^{-1}[\KK_iH_{i,1}, A_{j,t}]_{q^{-2}}.
\]
Comparing with the formula in Lemma \ref{lem: Serre prekey lem} (which only requires \eqref{eq: Sij defi} and \eqref{eq: H1 rule gen}) and using $S_{i,j}(-1,-1,t+1)$, we deduce $S_{i,j}(-1,0;t)$. Then, tracing through the proof of Corollary \eqref{cor rel2}, we observe that only \eqref{eq: H1 rule gen} is needed to derive $S_{i,j}(0,1;t)$. 

Next, write $\mathcal{X} = [A_{i,1},A_{i,0}]_{q^2} + \KK_iH_{i,1}$. We claim that 
\eq \label{eq: spec Serre aux1}
[\mathcal{X}, A_{j,t}]_{q^{-2}} = 0. 
\eneq
Specializing \eqref{eq: Serre rec sp1} to $r = 0 =s$ instead, we obtain 
\begin{align}
2S_{L,i,j}(0,1;t) = [2]S_{L,i,j}(0,0;t+1) +2 [[A_{i,1}, A_{i,0}]_{q^2}, A_{j,t}]_{q^{-2}} 
\end{align}
On the other hand, \eqref{eq: Sij defi} and \eqref{eq: H1 rule gen} imply  
\begin{align}
2S_{R,i,j}(0,1;t) = [2]S_{R,i,j}(0,0;t+1) -2\KK_i [H_{i,1}, A_{j,t}]_{q^{-2}}. 
\end{align}
Using the already established relations $S_{i,j}(0,1;t)$ and $S_{i,j}(0,0,t+1)$, we conclude that $[\mathcal{X}, A_{j,t}]_{q^{-2}} = 0$, as claimed. 

Now suppose that $\mu_j \leq -1$ and recall that $R_{j,j}(\mu_j, -1)$ states that 
\[
[A_{j,0}, A_{j,\mu_j}]_{q^2} + [A_{j,\mu_j+1}, A_{j,-1}]_{q^2} = 
-2^{\delta_{\mu_j,-1}} \Ck^{\mu_j}(q-q\mi)\mi \KK_j. 
\]
It follows from \eqref{eq: spec Serre aux1} that $\mathcal{X} R_{L,j,j} = q^{-4} R_{L,j,j} \mathcal{X}$. Then, using $R_{j,j}(\mu_j, -1)$ and the fact that $R_{R,j,j}(\mu_j, -1)$ is central, we conclude that $(1-q^{-4}) R_{R,j,j}(\mu_j, -1) \mathcal{X} = 0$. The invertibility of $R_{R,j,j}(\mu_j, -1)$ implies that $\mathcal{X} = 0$, i.e., 
\[
[A_{i,1},A_{i,0}]_{q^2} = - \KK_iH_{i,1},
\]
establishing $R_{i,i}(0,0)$. In particular, this derivation required \eqref{eq: spec Serre aux1} for $t = 0, -1, \mu_j+1, \mu_j$, which relied on $S_{i,j}(0,1;t)$ and $S_{i,j}(0,0,t+1)$ for these values of $t$. There derivation of the latter, in turn, only required \eqref{eq: Serrel}--\eqref{eq: Serrel4}, \eqref{eq: H1 rule gen} and $R_{i,j}(-1,t)$, $R_{i,j}(0,t)$ for these values of $t$ (i.e., \eqref{eq: Urel3a}). These are all Group 1 and Group 3 Levendorskii relations, which we may legitimately assume without creating a circularity, with the exception of $R_{i,j}(0,t)$. 
Tracing through the argument in \S \ref{ss: Rij rel proof} shows that only \eqref{eq: sprel2}--\eqref{eq: sprel5} are additionally required to obtain $R_{i,j}(0,t)$. 
A similar argument can be used to establish $R_{i,i}(-2,-2)$. 
\enproof

\section{Shifted quantum affine algebras}

In this section, we recall the definition of shifted quantum affine algebras and their coproduct from \cite{FT19}. 

\subsection{Series definition}  

Let $\boldsymbol\Gamma$ be the graph of type $\mathsf{A}_N$ with vertex set $\indx$. 
We associate to it a generalized Cartan matrix $(a_{ij})_{i,j \in \indx}$, coweight lattice $\Lambda$, set of anti-dominant coweights $\Lambda_-$ and other data as in \S \ref{subsec: Gamma}. 

Let $\mu \in \Lambda$ be \emph{any} coweight and set $\mu_i = \ai(\mu)$. 
Consider the following generating series 
\[
x_i^{\pm}(z)=\sum_{r\in\Z}x_{i,r}^{\pm}z^r,
\quad
\phi_i^{+}(z)=\sum_{r\ge 0}\phi_{i,r}^{+}z^r,
\quad
\phi_i^{-}(z)=\sum_{r\le \mu_i}
\phi_{i,r}^{-}z^{r}, 
\quad \delta(z)=\sum_{n\in\Z}z^n. 
\]
Below, we recall the definition of shifted quantum affine algebras from \cite[\S 5]{FT19}. The definition below corresponds to the algebra which is denoted in \emph{loc.\ cit.} as $\mathcal{U}_{0,\mu}^{sc}$. While the original definition contains two shifts $\mu_-, \mu_+$, it is well known that $\mathcal{U}_{\mu_+,\mu_-}^{sc} \cong \mathcal{U}_{0,\mu_+ + \mu_-}^{sc}$. Our series conventions are opposite to those in \cite{FT19}, but match the ones in \cite{hernandez-05}.

\Defi \label{defi: qaa}
The \emph{$\mu$-shifted quantum affine algebra} $\Uq_\mu$ is the algebra generated by 
\[
x_{i,r}^{\pm}\;(r\in\mathbb{Z}), \qquad  \phi_{i,r}^{+} \; (r\ge0), \qquad \phi_{i,r}^- \; ( r \leq \mu_i ), \qquad (\phi_{i, 0}^+)^{-1}, \qquad (\phi_{i, \mu_i}^-)^{-1},
\]
for $i \in \indx$, 
subject to the relations 
\begin{align}
\phi_i^{\pm}(z)\phi_j^{\pm}(w)
&=\phi_j^{\pm}(w)\phi_i^{\pm}(z),\\
\phi_i^{+}(z)\phi_j^{-}(w)
&=\phi_j^{-}(w)\phi_i^{+}(z),\\[2mm]
(\phi_{i,0}^+)^{-1}\phi_{i,0}^+
&=\phi_{i,0}^+(\phi_{i,0}^+)^{-1}=1,\\
(\phi_{i,\mu_i}^-)^{-1}\phi_{i,\mu_i}^-
&=\phi_{i,\mu_i}^-(\phi_{i,\mu_i}^-)^{-1}=1,\\
\phi_i^{\epsilon}(z)x_j^{\pm}(w)
&=\frac{q^{\pm a_{ij}}w-z}{w-q^{\pm a_{ij}}z}\,
x_j^{\pm}(w)\phi_i^{\epsilon}(z)
\qquad (\epsilon\in\{\pm1\}),\\[2mm]
[x_i^{+}(z),x_j^{-}(w)]
&=(q-q\mi)\mi \delta_{ij} \delta(\textstyle\frac{w}{z})
( \phi_i^{+}(z) - \phi_i^{-}(w) ),\\[2mm]
(w-q^{\pm a_{ij}}z)x_i^{\pm}(z)x_j^{\pm}(w)
&=(q^{\pm a_{ij}}w-z)x_j^{\pm}(w)x_i^{\pm}(z),\\[2mm]
[x_i^{\pm}(z),x_j^{\pm}(w)]
&=0\qquad (a_{ij}=0),
\end{align}
and the Drinfeld--Serre relations 
\begin{align*}
\Sym_{z_1, z_2} \Bigl( x_i^{\pm}(z_1)x_i^{\pm}(z_2)x_j^{\pm}(w) 
-[2] x_i^{\pm}(z_{1})x_j^{\pm}(w)x_i^{\pm}(z_{2}) 
+ x_j^{\pm}(w)x_i^{\pm}(z_{1}) x_i^{\pm}(z_{2})
\Bigr)=0 
\end{align*}
for $a_{ij}=-1$. 
\enDefi
We will abbreviate
\[
K_{i,+} = \phi_{i,0}^+, \qquad K_{i,-} = \phi_{i,\mu_i}^-. 
\]
Let $\bar{\Uq}_\mu$ be the quotient of $\Uq_\mu$ by the two-sided ideal generated by $K_{i,+}K_{i,-} - 1$. 
We will write $K_i$ for the image of $K_{i,+}$ in this quotient. 

\subsection{Relations term-by-term}

The relations in Definition \ref{defi: qaa} can also be written in terms of modes as:
\begin{align}
[\phi_{i,r}^{\epsilon},\phi_{j,s}^{\epsilon'}]
&=0
\qquad (\epsilon,\epsilon'\in\{\pm\}),\\[2mm]
K_{i,+}^{-1}K_{i,+}
&=K_{i,+}K_{i,+}^{-1}=1,\\
K_{i,-}^{-1}K_{i,-}
&=K_{i,-}K_{i,-}^{-1}=1,\\[2mm] \label{eq: qaa Kx+}
K_{i,+}x_{j,s}^{\pm}
&=q^{\pm a_{ij}}x_{j,s}^{\pm}K_{i,+},\\ \label{eq: qaa Kx-}
K_{i,-}x_{j,s}^{\pm}
&=q^{\mp a_{ij}}x_{j,s}^{\pm}K_{i,-},\\[2mm] \label{eq: qaa hx}
[h_{i,m},x_{j,r}^{\pm}]
&=\pm\textstyle\frac{[ma_{ij}]}{m}x_{j,r+m}^{\pm},\\[2mm] \label{eq: qaa x+-}
[x_{i,r}^{+},x_{j,s}^{-}]
&= \delta_{ij}(q-q^{-1})\mi 
\left(\phi_{i,r+s}^{+}-\phi_{i,r+s}^{-}\right),
\\[2mm] \label{eq: qaa xx}
[ x_{i,r+1}^{\pm}, x_{j,s}^{\pm}]_{q^{\pm a_{ij}}} 
&= - [ x_{j,s+1}^{\pm}, x_{i,r}^{\pm}]_{q^{\pm a_{ij}}} 
\\[2mm]
[x_{i,r}^{\pm},x_{j,s}^{\pm}]
&=0 \qquad (a_{ij}=0),
\end{align}
\begin{align*} 
\Sym_{r_1, r_2} \Bigl( x_{i,r_{1}}^{\pm} x_{i,r_{2}}^{\pm}x_{j,s}^{\pm}
-[2] x_{i,r_{1}}^{\pm} x_{j,s}^{\pm}x_{i,r_{2}}^{\pm} 
+x_{j,s}^{\pm} x_{i,r_{1}}^{\pm} x_{i,r_{2}}^{\pm}
\Bigr)=0 \qquad (a_{ij}=-1), 
\end{align*} 
where 
\begin{align}
\sum_{k\ge 0}\phi_{i,k}^{\pm}z^k
&= z^{\mu_{i,\pm}} K_{i,\pm} 
\exp\left( \pm (q-q^{-1})
\sum_{m>0}h_{i,\pm m}z^{\pm m} \right),
\end{align}
for $\mu_{i,+} = 0$ and $\mu_{i,-} = \mu_i$.

\subsection{Comultiplication} 

By \cite[Theorem 10.22]{FT19}, 
for any $\mu_1, \mu_2 \leq 0$, there exists a unique algebra homomorphism
\eq \label{eq: comult gen}
\Delta = \Delta_{\mu_1, \mu_2} \colon {\Uq}_{\mu_1 + \mu_2} \to {\Uq}_{\mu_1} \otimes {\Uq}_{\mu_2} 
\eneq
given by 
\begin{align}
\Delta(K_{i,\pm}^{\pm}) &= K_{i,\pm}^{\pm1} \otimes K_{i,\pm}^{\pm1} \\
\Delta(x^+_{i,0}) &= x^+_{i,0} \otimes K_{i,+} + 1 \otimes x^+_{i,0}, \\ 
\Delta(x^+_{i,\mu_2-1}) &= x^+_{i,-1} \otimes K_{i,-} + 1 \otimes x^+_{i,\mu_2-1} + X_i^+, \\ 
\Delta(x^-_{i,1}) &= x^-_{i,1} \otimes 1 + K_{i,+} \otimes x^-_{i,1} + X_i^-, \\ 
\Delta(x^-_{i,\mu_1}) &= x^-_{i,\mu_1} \otimes 1 + K_{i,-} \otimes x^-_{i,0}, \\ 
\Delta(x^+_{i,r}) &= 1 \otimes x^+_{i,r} \qquad (\mu_2 \leq r \leq -1), \\ 
\Delta(x^-_{i,s}) &= x^-_{i,s} \otimes 1 \qquad (\mu_1 +1 \leq s \leq 0), \\
\Delta(h_{i,1}) &= h_{i,1} \otimes 1 + 1 \otimes h_{i,1} - (q^2 - q^{-2}) x^+_{i,0} \otimes x^{-}_{i,1} + H_i^+ , \\ 
\Delta(h_{i,-1}) &= h_{i,-1} \otimes 1 + 1 \otimes h_{i,-1} - (q^2 - q^{-2}) x^+_{i,-1} \otimes x^{-}_{i,0} + H_i^-, 
\end{align} 
where we refer to \cite[Theorem 10.22]{FT19} for the precise definiton of the terms $X_i^{\pm}, H_i^{\pm}$.

\subsection{The total anti-dominantly shifted algebra} 


We may also collect the full family of anti-dominantly shifted $\Uq_\mu$ (resp.\ $\bar{\Uq}_\mu$) into a single (non-unitary) algebra  
\[
\Uq_\bullet = \bigoplus_{\mu \in \Lambda_-} \Uq_\mu \mathbf{e}(\mu), \qquad 
\bar{\Uq}_\bullet = \bigoplus_{\mu \in \Lambda_-} \bar{\Uq}_\mu \mathbf{e}(\mu),
\]
with extra relations 
\[
\mathbf{e}(\mu)\mathbf{e}(\nu) = \delta_{\mu,\nu}\mathbf{e}(\mu), \qquad \mathbf{e}(\mu) a = a \mathbf{e}(\mu) \quad (a \in \Uq_\lambda). 
\]
Observe that, in this formulation, only the comultiplication in $\Uq_\bullet$ respects the $\Lambda_{-}$-grading while multiplication remains concentrated in each graded piece $\Uq_\mu$.

\section{Coideal realization: rank 1}
\label{sec: sub real1}

In this section assume that $| \indx | = 1$ so that $\Ui_\mu$ is the $\mu$-shifted $q$-Onsager algebra. We can drop the subscript $i$ everywhere. 

\subsection{Construction of the homomorphism} 

We show below that, for anti-dominant shifts, any central reduction $\uig$ admits a homomorphism to the corresponding shifted quantum affine $\mathfrak{sl}_2$. Remarkably, the images of the ``boundary'' Levendorskii generators $A_0$ and $A_{\mu-1}$ are given by almost the same formulae as in the ordinary non-shifted $q$-Onsager case.

\Thm \label{thm: homo rank 1}
For any anti-dominant shift $\mu \leq 0$, any $\boldsymbol\gamma \in (\C^\times)^{2}$ and $\boldsymbol\sigma = (\sigma, \sigma') \in \C^2$, there exists an algebra homomorphism 
\[
\iota_{\mu, \boldsymbol\sigma} \colon \uig \ \to \ \Uq_\mu
\]
sending 
\begin{align}
A_{0} &\mapsto x^-_{\mu} -  q^{-2} C^{\mu} \gamma x^+_{0} K_{+}\mi + \sigma K_{+}\mi,   \\
A_{\mu-1} &\mapsto x^-_{1} - C^{\mu-1} \gamma x^+_{-1} K_{-}\mi + \sigma' K_{-}\mi,   \\ 
A_{r} & \mapsto x^-_{\mu - r}, 
\end{align}
for  $\mu \leq r \leq -1$. 
\enthm

\Proof 
If $\mu = 0$, then the formulae above reduce to the usual embedding of the $q$-Onsager algebra into quantum affine $\mathfrak{sl}_2$ as in, e.g., \cite{kolb-14}. Therefore, we may assume that $\mu \leq - 1$.  
By Theorem \ref{thm: 1Lev}, it suffices to show that all the Levendorskii relations are preserved. 

First consider relation \eqref{eq: 1Urel2} with $|r-s| = -\mu - 1$. If $\mu \leq - 2$, the LHS are sent to
\begin{align}
[A_{0}, A_{\mu}]_{q^2} + [A_{\mu+1}, A_{-1}]_{q^2} &\mapsto [x^-_{\mu} - q^{-2}C^\mu \gamma  x^+_0 K_+\mi + \sigma K_+\mi, x^-_0]_{q^2} + [x^-_{-1}, x^-_{\mu+1}]_{q^2}   \\ 
&= -q^{-2} C^\mu \gamma  [ x^+_0 K_+\mi, x^-_0]_{q^2} = - C^\mu \gamma \rho, \\ 
[A_{-1}, A_{\mu-1}]_{q^2} + [A_{\mu}, A_{-2}]_{q^2} &\mapsto [x^-_{\mu+1}, x^-_{1} - C^{\mu-1} \gamma  x^+_{-1} K_-\mi + \sigma' K_-\mi]_{q^2} + [x^-_{0}, x^-_{\mu+2}]_{q^2}   \\ 
&= - C^{\mu-1} \gamma  [x^-_{\mu+1}, x^+_{-1} K_-\mi]_{q^2} = - C^{\mu-1} \gamma \rho, 
\end{align}
which clearly match the RHS. In the two cases above, the former equalities follows from the fact that 
the expressions $[x^-_{\mu}, x^-_0]_{q^2} + [x^-_{-1}, x^-_{\mu+1}]_{q^2}$ and $[x^-_{\mu+1}, x^-_1]_{q^2} + [x^-_{0}, x^-_{\mu+2}]_{q^2}$ vanish by \eqref{eq: qaa xx}, and $\sigma[K_+\mi, x^-_0]_{q^2}$ and $\sigma' [x^-_{\mu+1}, K_-\mi]_{q^2}$ vanish by \eqref{eq: qaa Kx+}--\eqref{eq: qaa Kx-}. The latter equalities follow from \eqref{eq: qaa x+-}. 
If $\mu = -1$, the calculation above has to be adjusted in the obvious way. 

If $|r-s| \leq -\mu-2$ then there are three cases to be considered: $R(r,-1)$, $R(\mu-1,s)$ and $R(r,s)$ with $r,s \notin \{\mu-1,-1\}$. In the third case, $R(r,s)$ reduces to \eqref{eq: qaa xx} under $\iota_\mu$. 
In the remaining cases, the calculation is similar to the one above. Indeed, we get 
\begin{align}
[A_{0}, A_{r}]_{q^2} + [A_{r+1}, A_{-1}]_{q^2} &\mapsto [x^-_{\mu} - q^{-2}C^\mu \gamma  x^+_0 K_+\mi + \sigma K_+\mi, x^-_{\mu-r}]_{q^2} + [x^-_{\mu-r-1}, x^-_{\mu+1}]_{q^2}   \\ 
&= -q^{-2} C^\mu \gamma  [ x^+_0 K_+\mi, x^-_{\mu-r}]_{q^2} = 0, \\ 
[A_{s+1}, A_{\mu-1}]_{q^2} + [A_{\mu}, A_{s}]_{q^2} &\mapsto [x^-_{\mu-s-1}, x^-_{1} - C^{\mu-1} \gamma  x^+_{-1} K_-\mi + \sigma' K_-\mi]_{q^2} + [x^-_{0}, x^-_{\mu-s}]_{q^2}   \\ 
&= - C^{\mu-1} \gamma  [x^-_{\mu-s-1}, x^+_{-1} K_-\mi]_{q^2} = 0, 
\end{align}
as required. 

Next, we calculate the image of $H_{1} = - \gamma\mi \Ck^{1-\mu} \big( [A_{\mu}, A_{-1}]_{q^2} + [A_0, A_{\mu-1}]_{q^2}  \big)$. We have 
\begin{align}
- \gamma \Ck^{\mu-1} H_1 &\mapsto [x^-_{0}, x^-_{\mu+1}]_{q^2} + [x^-_{\mu}, x^-_{1}]_{q^2} \\ 
&\quad + \sigma'[x^-_\mu, K_-\mi]_{q^2} + \sigma[K_+\mi, x^-_1]_{q^2} \\ 
&\quad -C^{\mu-1} \gamma [x^-_\mu, x^+_{-1} K_-\mi]_{q^2} - q^{-2} C^\mu \gamma [x^+_0 K_+\mi, x^-_1]_{q^2} \\ 
&\quad + q^{-2} \Ck^{2\mu-1} \gamma^2 [x^+_0K_+\mi, x^+_{-1}K_-\mi]_{q^2} \\
&\quad - q^{-2} C^\mu \gamma \sigma' [x^+_0 K_+\mi, K_-\mi]_{q^2} - C^{\mu-1} \gamma \sigma [x^+_{-1} K_-\mi, K_+\mi]_{q^2} \\
&\quad +\sigma \sigma' [K_+\mi, K_-\mi]_{q^2}. 
\end{align}
The first two rows on the RHS vanish by \eqref{eq: qaa xx} and \eqref{eq: qaa Kx+}--\eqref{eq: qaa Kx-}. After multiplication by $- \gamma \Ck^{\mu-1}$, the third row on the RHS becomes $Ch_1 - h_{-1}$ by \eqref{eq: qaa x+-}, and the fourth to sixth rows become $Y_1 K_+\mi K_-\mi$ to $Y_3 K_+\mi K_-\mi$, respectively, with the $Y_k$ defined below. In effect,  
\begin{align}
\iota_\mu(H_{1}) \label{eq: H1 emb formula1}
&= C h_{1}-h_{-1} + (Y_1 + Y_2 + Y_3) K_{+}^{-1}K_{-}^{-1}, \\
Y_1 &= (1-q^{-4})\gamma C^{\mu}
x^+_{0}x^+_{-1}, \\
Y_2 &= (q^{-2}-q^2) \big(C\sigma' x^+_{0}+\sigma x^+_{-1}\big), \\
Y_3 &= (q^2-1)\gamma^{-1}C^{1-\mu} \sigma\sigma'. 
\end{align} 

Let us prove \eqref{eq: 1Urel1}. Given the admissible values of $r$, we have $\iota_\mu(A_r) = x^-_{\mu - r}$. Moreover, using \eqref{eq: qaa hx}, we get 
\eq \label{eq: embpr 1}
[Ch_1 - h_{-1}, x^-_{\mu-r}] = - [2]C x^-_{\mu-r+1} + [2]x^-_{\mu-r-1}. 
\eneq 
First take $\mu < r < -1$. Then the RHS of \eqref{eq: embpr 1} already equals the image of $[2](A_{r+1} - C A_{r-1})$. It also follows straightforwardly from \eqref{eq: qaa x+-} and the range of $r$ that $[Y_1 + Y_2 + Y_3, x^-_{\mu-r}] = 0$. Next, suppose that $r = -1$. Then 
\begin{align} \label{eq: embpr 2}
[Y_1, x^-_{\mu+1}] = -q^{-2}[2] \gamma C^\mu x^+_0 K_-, \qquad [Y_2, x^-_{\mu+1}] = [2] \sigma K_-, \qquad [Y_3, x^-_{\mu+1}] = 0. 
\end{align} 
Summing \eqref{eq: embpr 1}--\eqref{eq: embpr 2} gives precisely the image of $[2](A_0 - \Ck A_{-2})$. 
The calculation for $r = \mu$ is analogous. 

It remains to show that \eqref{eq: 1HHlev} is preserved. This is the only relation which is difficult to check by hand. 
As the first step, one has to calculate the image of $A_1$. 
Then, by Remark \ref{rem: theta}, we may take $\overline{\Theta}_{2-\mu} = - C^{\mu-1}R_L(\mu-1,0)$, which reduces checking \eqref{eq: sprel1} to 
\[
\iota_\mu\Big( \big[H_1, [A_1, A_{\mu-1}]_{q^2} + [A_\mu, A_0]_{q^2} \big] \Big) = 0. 
\] 
This calculation is elementary, but somewhat involved. A computational verification can be found in \cite{LevQSP}. 
\enproof

\subsection{Comodule property}

By \cite[Theorem 10.6]{FT19}, the comultiplication maps in \S \ref{eq: comult gen} specialize in the following way in rank one. For any $\mu_1, \mu_2 \leq 0$, there exists a unique algebra homomorphism
\[
\Delta = \Delta_{\mu_1, \mu_2} \colon \ \bar{\Uq}_{\mu_1 + \mu_2} \to \bar{\Uq}_{\mu_1} \otimes \bar{\Uq}_{\mu_2} 
\]
given by 
\begin{align}
\Delta(K^{\pm1}) &= K^{\pm1} \otimes K^{\pm1} \\
\Delta(x^+_0) &= x^+_0 \otimes K + 1 \otimes x^+_0, \\ 
\Delta(x^+_{\mu_2-1}) &= x^+_{-1} \otimes K\mi + 1 \otimes x^+_{\mu_2-1}, \\ 
\Delta(x^-_{1}) &= x^-_{1} \otimes 1 + K \otimes x^-_1, \\ 
\Delta(x^-_{\mu_1}) &= x^-_{\mu_1} \otimes 1 + K\mi \otimes x^-_0, \\ 
\Delta(x^+_r) &= 1 \otimes x^+_r \qquad (\mu_2 \leq r \leq -1), \\ 
\Delta(x^-_s) &= x^-_s \otimes 1 \qquad (\mu_1 +1 \leq s \leq 0), \\
\Delta(h_1^+) &= h_1 \otimes 1 + 1 \otimes h_1 - (q^2 - q^{-2}) x^+_0 \otimes x^{-}_1, \\ \label{eq: h-1 coprod}
\Delta(h_{-1}^+) &= h_{-1} \otimes 1 + 1 \otimes h_{-1} - (q^2 - q^{-2}) x^+_{-1} \otimes x^{-}_0. 
\end{align} 

Let us abbreviate $\mathbf{U}^\imath_{\mu,\boldsymbol\gamma,\boldsymbol\sigma} = \iota_{\mu, \boldsymbol\sigma}(\uig)$. We will write $\bar{\mathbf{U}}^\imath_{\mu,\boldsymbol\gamma,\boldsymbol\sigma}$ for its  image in~$\bar{\mathbf{U}}_\mu$.

\Thm \label{thm: coideal property 1}
For any $\mu_1, \mu_2 \leq 0$, we have 
\[
\Delta_{\mu_1, \mu_2} (\bar{\mathbf{U}}^\imath_{\mu,\boldsymbol\gamma,\boldsymbol\sigma} ) \subseteq \bar{\Uq}_{\mu_1} \otimes \bar{\mathbf{U}}^\imath_{\mu_2,\boldsymbol\gamma',\boldsymbol\sigma},
\]
with $\boldsymbol\gamma' = (C^{\mu_1}\gamma_1, C)$. 
\enthm

\Proof
Abbreviate $\Delta = \Delta_{\mu_1, \mu_2} \circ \iota_{\mu, \boldsymbol\sigma}$. 
It suffices to show that the images of the Levendorskii generators lie in $\bar{\Uq}_{\mu_1} \otimes \bar{\mathbf{U}}^\imath_{\mu_2,\boldsymbol\gamma,\boldsymbol\sigma}$. In fact, by Remark \ref{rem: bound gen}, it suffices to check that the images of the boundary generators $A_{0}, A_{-1}, A_{\mu}$ and $A_{\mu-1}$ are in $\bar{\Uq}_{\mu_1} \otimes \bar{\mathbf{U}}^\imath_{\mu_2,\boldsymbol\gamma,\boldsymbol\sigma}$. Below we explicitly consider the generic case where none of the four boundary generators coincide - the remaining cases follow by a straightforward modification. 
It is immediately clear that $A_{\mu-1}$ and $A_{\mu}$ are in $\bar{\Uq}_{\mu_1} \otimes \bar{\mathbf{U}}^\imath_{\mu_2,\boldsymbol\gamma,\boldsymbol\sigma}$. Indeed, 
\begin{align} 
\Delta(A_\mu) &= \Delta_{\mu_1, \mu_2}(x^-_0) = 
\left\{ \begin{array}{l l}
x^-_0 \otimes 1 + K\mi \otimes A_0 & \mbox{if } \mu_1 = 0, \\[2pt] 
x^-_0 \otimes 1  & \mbox{if } \mu_1 \leq -1, 
\end{array} \right. \\
\Delta(A_{\mu-1}) &= \Delta_{\mu_1, \mu_2}(x^-_{1} - C^{\mu-1} \gamma x^+_{-1} K + \sigma' K) \\ 
&=
\left\{ \begin{array}{l l}
(x^-_1 - C^{\mu-1}\gamma x^+_{-1}K )\otimes 1 + K \otimes A_0 & \mbox{if } \mu_2 = 0, \\[2pt] 
x^-_1 \otimes 1 + K \otimes A_0 & \mbox{if } \mu_2 \leq -1.  
\end{array} \right. 
\end{align} 

Next, consider $A_{-1}$ and $A_0$. We have 
\begin{align}
\Delta(A_{-1}) &= \Delta_{\mu_1, \mu_2}(x^-_{\mu+1}), \\ 
\Delta(A_0) &= \Delta_{\mu_1, \mu_2}(x^-_{\mu} -  q^{-2} C^{\mu} \gamma x^+_{0} K\mi + \sigma K\mi) \\ 
&= \Delta_{\mu_1, \mu_2}(x^-_{\mu}) - q^{-2} C^{\mu} \gamma (x^+_0K\mi \otimes 1 + K\mi \otimes x^+_0 K\mi) + \sigma K\mi \otimes K\mi. 
\end{align}
We know that $\Delta_{\mu_1, \mu_2}(x^-_{\mu_1}) = x^-_{\mu_1} \otimes 1 + K\mi \otimes x^-_0$. 
Repeatedly applying the commutator with \eqref{eq: h-1 coprod}, we see that 
\[
\Delta_{\mu_1, \mu_2}(x^-_{\mu+1}) \equiv x^-_{\mu + 1} \otimes 1 + K\mi \otimes x^-_{\mu_2+1}, \qquad 
\Delta_{\mu_1, \mu_2}(x^-_{\mu}) \equiv x^-_{\mu} \otimes 1 + K\mi \otimes x^-_{\mu_2}
\]
modulo $\bar{\Uq}_{\mu_1} \otimes \bar{\mathbf{U}}^\imath_{\mu_2,\boldsymbol\gamma,\boldsymbol\sigma}$. It follows that
\[
\Delta(A_{-1}) \equiv x^-_{\mu + 1} \otimes 1 + K\mi \otimes A_{-1}, \qquad 
\Delta(A_{0}) \equiv x^-_{\mu} \otimes 1 + K\mi \otimes A_0
\]
modulo $\bar{\Uq}_{\mu_1} \otimes \bar{\mathbf{U}}^\imath_{\mu_2,\boldsymbol\gamma,\boldsymbol\sigma}$, completing the proof. 
\enproof

\section{Coideal realization: higher rank}
\label{sec: sub real2} 

In this section, we assume that $\boldsymbol\Gamma$ is the graph of type $\mathsf{A}_N$ with vertex set $\indx = \{1, \cdots, N\}$. 
We generalize the results of \S \ref{sec: sub real1} to arbitrary rank and establish the injectivity of $\iota_\mu$. 
In other words, we realize a central reduction of the $\mu$-shifted quantum affine symmetric pair algebra $\Ui_\mu$, for $\mu$ anti-dominant, as a subalgebra of the corresponding $\mu$-shifted quantum affine algebra $\Uq_\mu$, and show that it satisfies a graded coideal property.

\subsection{Correction terms} 

Fix $\boldsymbol\gamma \in (\mathbb C^\times)^{N+1}$. Set $\kappa_j=K_{j,+}^{-1}K_{j,-}^{-1}$ and, for $1 \leq a \leq i \leq b \leq N$, define 
\begin{align*}
 E_{a,b} &=[x^+_{a,0},[x^+_{a+1,0},\cdots,[x^+_{b-1,0},x^+_{b,-1}]_{q^{-1}} \cdots]_{q^{-1}}]_{q^{-1}}, \\
 L_{a,i} &=[x^+_{i-1,0},[x^+_{i-2,0},\cdots,x^+_{a,0}]_q\cdots]_q, \\ 
 R_{i,b} &=[x^+_{i+1,0},[x^+_{i+2,0},\cdots,x^+_{b,0}]_q\cdots]_q.
\end{align*}
Moreover, set 
\[
 \ell_{ab}=b-a,\quad \epsilon_{ab}(i)=\delta_{a<i}+\delta_{i<b},
 \quad \gamma_{ab}=\prod_{j=a}^b\gamma_j,
 \quad \mu_{ab}=\sum_{j=a}^b\mu_j, \quad 
\kappa_{ab} = \prod_{\substack{a\le j\le b\\j\ne i}}\kappa_j, 
\]
\[
\tau_{ab} = (-1)^{i-a+1+\epsilon_{ab}(i)} q^{-\ell_{ab}-\epsilon_{ab}(i)}(q-q\mi)^{\epsilon_{ab}(i)} 
\gamma_{ab}C^{\mu_{ab}-1} \kappa_{ab}.
\]
Define the ``correction term'' $Q_i$ as  
\begin{equation}\label{eq:ta-correction} 
 Q_i=\sum_{\substack{a\le i\le b\\a<b}} \tau_{ab} \cdot L_{a,i}R_{i,b}E_{a,b}K_{i,-}^{-1}. 
\end{equation}

\subsection{Construction of the homomorphism} 

The construction is similar to the one in Theorem \ref{thm: homo rank 1} with two major differences. Firstly, the formula for $A_{i,\mu_i-1}$ requires a complicated correction term, reminiscent of those appearing in Finkelberg and Tsymbaliuk's comultiplication formulae (c.f.\ \cite[Theorem 10.22]{FT19}) and those in \cite[Lemma 9.9]{Przez-23}. Secondly, we assume $\sigma_i = \sigma'_i = 0$. This constraint could perhaps be circumvented by introducing more general correction terms, but we do not consider this here. 

\Thm \label{thm: homo rank 2} 
For any anti-dominant shift $\mu \in \Lambda_-$ and any $\boldsymbol\gamma \in (\C^\times)^{N+1}$, there exists an algebra homomorphism 
\[
\iota_\mu \colon \uig \ \to \ \Uq_\mu
\]
sending 
\begin{align}
A_{i,0} &\mapsto x^-_{i, \mu_i} -  q^{-2} C^{\mu} \gamma_i x^+_{i, 0} K_{i,+}\mi,   \\
A_{i,\mu_i-1} &\mapsto x^-_{i, 1} - C^{\mu-1} \gamma_i x^+_{i, -1} K_{i,-}\mi  +  Q_i, \\ 
A_{i,r} & \mapsto x^-_{i, \mu_i - r}, 
\end{align}
for  $\mu_i \leq r \leq -1$. 
\enthm 

We will prove Theorem \ref{thm: homo rank 2} in \S \ref{sss: unshifted proof}--\S \ref{sss: shifted proof} below. 

\subsubsection{Unshifted case} 
\label{sss: unshifted proof}

We first prove the following special case of Theorem \ref{thm: homo rank 2}. 

\Lem
Theorem \ref{thm: homo rank 2} holds for $\mu = 0$. Moreover, $\iota_0$ coincides with the standard embedding of $\mathbf{U}^\imath_{0,\boldsymbol\gamma}$ into $\mathbf{U}_0$ from \cite{kolb-14} (see, e.g., \cite[\S 8.4]{Przez-23} for an explicit definition). 
\enlem

\Proof
This is already implicitly computed in \cite[\S 9]{Przez-23} and constitutes a refinement of Lemma 9.9 in \emph{op.\ cit.} Indeed, \cite[(9.4)]{Przez-23} shows that the image of $A_{i,-1}$ under the standard embedding equals 
\[
\iota_0(A_{i,-1}) = x^-_{i,1} - (-1)^{N-i} C^{-1} \gamma_i \mathcal{P}_i(Z_{1,N}), 
\]
with 
\begin{align}
\mathcal P_i(-) &= \operatorname{ad}_q(B_{i-1})\cdots\operatorname{ad}_q(B_1) \operatorname{ad}_q(B_{i+1})\cdots\operatorname{ad}_q(B_N)(-), 
\end{align}
\[
B_j = x^-_{j,0} - q^{-2}\gamma_j x^+_{0,j} K_{j,+}^{-1}, \qquad 
\operatorname{ad}_q(X)(Y) = [X,Y]_q, \qquad 
Z_{a,b} = E_{a,b}\prod_{j=a}^b K_{j,-}^{-1}. 
\] 

It is easy to check that 
\begin{align} \label{eq: mu0 form 1}
[x^-_{b,0}, Z_{a,b}]_q &= - Z_{a, b-1}, \\  
[x^+_{b,0} K_{b,+}\mi, Z_{a,b}]_q &= - (q-q\mi) x^+_{b,0} Z_{a,b}  K_{b,+}\mi, \label{eq: mu0 form 2}
\end{align} 
with analogous formulae for $q$-commutators with $x^-_{a,0}$ and $x^+_{a,0}K_{a,+}\mi$. 
Moreover, each $\ad_q(B_j)$ can be split into a linear combination of $\ad_q(x^-_{j,0})$ and $\ad_q(x^+_{j,0}K_{j,+}\mi)$. Let us call these ``negative'' and ``positive adjoints''. It follows from the arguments of \cite[\S 9]{Przez-23} that in each of the segments $\operatorname{ad}_q(B_{i-1})\cdots\operatorname{ad}_q(B_1)$ and $\operatorname{ad}_q(B_{i+1})\cdots\operatorname{ad}_q(B_N)$, only strings of the form $(+, \cdots, +, -, \cdots, -)$ are non-zero, i.e., a negative adjoint after a positive adjoint vanishes. Therefore, tuples of admissible strings are indexed by $a \leq i \leq b$ with $a \neq b$, which corresponds to the sequence 
\begin{align} 
&\ad_q(x^+_{i-1,0} K_{i-1,+}\mi) \cdots \ad_q(x^+_{a,0} K_{a,+}\mi) \ad_q(x^-_{a-1,0}) \cdots \ad_q(x^-_{1,0}) \ \circ \\ 
\circ &\ad_q(x^+_{i+1,0} K_{i+1,+}\mi) \cdots \ad_q(x^+_{b,0} K_{b,+}\mi) \ad_q(x^-_{b+1,0}) \cdots \ad_q(x^-_{N,0}) 
\end{align}
of adjoint operators. 
Given a sequence as above and using \eqref{eq: mu0 form 1}, we see that the initial seqment of negative adjoints transforms $Z_{1,N}$ into $Z_{1,b}$, and the first positive adjoint gives a scalar multiple of $x^+_{b,0} Z_{1,b} K_{b,+}\mi$, by \eqref{eq: mu0 form 2}. Subsequent positive adjoints affect only $x^+_{a,0}$ and give rise to $R_{i,b} Z_{a,b} \prod_{j=i+1}^b K_{j,+}\mi$. Repeating these steps with the second string of adjoint operators yields the contribution $ \tau_{ab} \cdot L_{a,i}R_{i,b}E_{a,b}K_{i,-}^{-1}$. 
\enproof

\subsubsection{Basic relation check} 

We will now check that a few easy relations are preserved. 

\Lem \label{lem: basic rels lemma}
The map $\iota_\mu$, defined as in Theorem \ref{thm: homo rank 2}, preserves relations \eqref{eq: Urel1} (for $i=j$) and \eqref{eq: Urel3}. 
\enlem

\Proof
There is nothing to show if $\mu_i = 0$, so suppose $\mu_i \leq -1$. First consider relation \eqref{eq: Urel3}. The proof is the same as in Theorem \ref{thm: homo rank 1}, except the relations involving $A_{i, \mu_i - 1}$. Observe that 
\eq \label{eq: bas rel ch1}
 [x^-_{i,s}, Q_i]_{q^2} = 0 \qquad (\mu_i<s\le0). 
\eneq
Indeed, $x^-_{i,s}$ obviously commutes with all the generators appearing in $Q_i$ except for $K_{i,-}\mi$ and $x^+_{i, 0}$. However, the associated Cartan mode $\phi^-_{i,s}$ vanishes since $s > \mu_i$. This implies that $\iota_\mu$ preserves all the relations \eqref{eq: Urel3}(the relation $R_{i,i}(\mu_i-1,-1)$ is the definition of $H_{i,1}$ and does not need to be checked). 

Next, we check \eqref{eq: Urel1}. 
Let $\widehat{H}_{i,1}$ be the expression obtained by omitting $\mathcal Q_i$ from the definition of $\iota_\mu(A_{i,\mu_i-1})$. As in \eqref{eq: H1 emb formula1}, we get 
\[
 \iota_\mu(\widehat H_{i,1}) = Ch_{i,1} - h_{i,-1}
 +(1-q^{-4})\gamma_iC^{\mu_i}x^+_{i,0} x^+_{i,-1}\kappa_i.
\]
Abbreviating $u_i=-q^{-2}C^{\mu_i}\gamma_i x^+_{i,0} K_{i,+}^{-1}$, we have 
\[
\iota_\mu(H_{i,1} - \widehat H_{i,1}) = -\gamma_i^{-1}C^{1-\mu_i} \bigl([x^-_{i,\mu_i}, Q_i]_{q^2}
+ [u_i, Q_i]_{q^2} \bigr).
\]
To show that \eqref{eq: Urel1} is preserved, we need to check that 
\eq \label{eq: bas rel ch2}
[ \iota_\mu(H_{i,1} - \widehat H_{i,1}), x^-_{i,0} ] = -[2] C \delta_{s, 0} Q_i, 
\eneq
for $\mu_i < s \leq 0$. 
Observe that 
\begin{align}[x^-_{i,\mu_i}, Q_i]_{q^2} \in \langle x^+_{j,r}, \kappa_k^{\pm1} \mid j\ne i, \ r \in \indx \rangle. 
\label{eq:ta-local-other}
\end{align}
Indeed, commuting $x^-_{i,\mu_i}$ with $x^+_{i,0}$ inside $Q_i$ produces a remainder term with $K_{i,-}$, which can be moved to the right to cancel $K_{i,-}\mi$, leaving an expression in $ x^+_{j,r}, \kappa_k^{\pm1}$ only. It follows immediately from \eqref{eq:ta-local-other} that $[x^-_{i,\mu_i}, Q_i]_{q^2}$ commutes with all $x^-_{i,s}$. 
Using \eqref{eq: bas rel ch1}, one can also easily show that $[u_i, Q_i]_{q^2}$ commutes with $x^-_{i,s}$ for $\mu_i<s<0$. At $s=0$, using $Q_ix^-_{i,0} = q^{-2}x^-_{i,0} Q_i$ and $[u_i, x^-_{i,0}]_{q^2}=(q-q\mi)\mi  \gamma_iC^{\mu_i}$, we obtain
\[
 [[u_i, Q_i]_{q^2},x^-_{i,0}] = [2]\gamma_iC^{\mu_i} Q_i, 
\]
which, after multiplication by $-\gamma_i^{-1}C^{1-\mu_i}$, matches the RHS of \eqref{eq: bas rel ch2}. 
\enproof

\subsubsection{Proof}
\label{sss: shifted proof}

We now complete the proof of Theorem \ref{thm: homo rank 2}. 
Let $\mathcal R=\mathbb C(q)[t]$ and $\mathcal R_{\mathrm{loc}}=\mathbb C(q)[t,t^{-1}]$. Define
\[
 p_i(z)=\begin{cases}1+tz^{-\mu_i},&\mu_i<0,\\1,&\mu_i=0,\end{cases}
 \qquad
 \gamma_i(t)=\begin{cases}tC^{\mu_i}\gamma_i,&\mu_i<0,\\
 \gamma_i,&\mu_i=0.\end{cases}
\]
By a version of \cite[Lemma~10.24]{FT19}, there is a homomorphism
\begin{equation}\label{eq:ta-shift}
 \sigma_t: \mathbf{U} _0\otimes\mathcal R_{\mathrm{loc}}
       \longrightarrow \mathbf{U}_\mu\otimes\mathcal R_{\mathrm{loc}},
 \qquad
 \begin{cases}
 x_i^+(z)\mapsto x_i^+(z),\\
 x_i^-(z)\mapsto p_i(z)x_i^-(z),\\
 \phi_i^\pm(z)\mapsto p_i(z)\phi_i^\pm(z).
 \end{cases}
\end{equation}
Consider $\mathbf{U}^\imath_{0,\boldsymbol\gamma}$ with parameters $\gamma(t)=(\gamma_1(t),\ldots,\gamma_N(t),C)$, and the composition 
\[
\Psi_t \colon \mathbf{U}^\imath_{0,\boldsymbol\gamma} \overset{\iota_0}{\longrightarrow} \mathbf{U}_{0} \otimes\mathcal R_{\mathrm{loc}} \overset{\sigma_t}{\longrightarrow} \mathbf{U}_{\mu} \otimes\mathcal R_{\mathrm{loc}}. 
\]

Let us now introduce some notation. We will use the superscript $0$ to denote elements of $\mathbf{U}^\imath_{0,\boldsymbol\gamma}$, e.g., $A_{i,0}^0$. Moreover, in $\mathbf{U}_\mu$, let 
\begin{align}
B_{i,0} = x^-_{i, \mu_i} -  q^{-2} C^{\mu} \gamma_i x^+_{i, 0} K_{i,+}\mi, \qquad 
B_{i,\mu_i-1} =  x^-_{i, 1} - C^{\mu-1} \gamma_i x^+_{i, -1} K_{i,-}\mi  +  Q_i, 
\end{align}
and $B_{i,r} = x^-_{i, \mu_i - r}$ for $\mu_i \leq r \leq -1$. 
Let 
\[
G_{i,1} = -\frac{C^{1-\mu_i}}{2^{\delta_{\mu_i,0}}\gamma_i} \bigl([B_{i,0},B_{i,\mu_i-1}]_{q^2}
+[B_{i,\mu_i},B_{i,-1}]_{q^2}\bigr), 
\]
and extend the definition of $B_{i,r}$ to all $r \in \Z$ via the rule
\eq \label{eq: proof gen homo 1}
[G_{i,1}, B_{i,r}] = [2] (B_{i,r+1} - C B_{i,r-1}).
\eneq
By Lemma \ref{lem: basic rels lemma}, this is consistent with the definitions already made. 

Using Lemma \ref{lem: basic rels lemma} again, we get 
\begin{align} \label{eq: proof gen homo 2}
\ \ \ \Psi_t(H_{i,1}^0) = -C \gamma_i(t)\mi [ \Psi_t(A_{i,0}^0), \Psi_t(A_{i,-1}^0)]_{q^2} 
= G_{i,1} + \delta_{\mu_i,-1} (q-q\mi)\mi (t\mi + Ct). 
\end{align} 
It follows from \eqref{eq: proof gen homo 1}--\eqref{eq: proof gen homo 2} that 
\eq
\label{eq:ta-all-real}
 \Psi_t(A^0_{i,r}) =
 \begin{cases}B_{i,r+\mu_i}+tB_{i,r}, & \mu_i<0,\\
 B_{i,r},& \mu_i=0,
 \end{cases}
\eneq
for all $r \in \Z$. In particular, the images of all the real modes $A^0_{i,r}$ under $\Psi_t$ are polynomial in $t$. 
The same is true for the images of all the imaginary modes $\Theta_{i,s}^0$. Indeed, $\Theta^0_{i,0} = (q-q\mi)\mi \gamma_i(t)$, and all the other imaginary modes are obtained from real modes via the expression $R^0_{L,i,i}$. Hence the polynomiality of \eqref{eq:ta-all-real} implies the polynomiality of $\Psi_t(\Theta_{i,s}^0)$.  

Since both $\Psi_t(A^0_{i,r})$ and $\Psi_t(\Theta_{i,s}^0)$ lie in $\mathbf{U}_{\mu} \otimes\mathcal R$, they can be specialized at $t=0$. 
Set 
\[
 S_i(z)= C^{-\mu_i} \left.\Psi_t(\Theta_i^0(z))\right|_{t=0},\qquad 
 B_i(z)=\sum_{r\in\mathbb Z}B_{i,r}z^r.
\]
The initial modes of $S_i(z)$ satisfy 
\begin{equation}\label{eq:ta-leading}
 S_{i,s}=0\ (s<-\mu_i),\qquad
 S_{i,-\mu_i}=(q-q\mi)\mi\gamma_i,\qquad
 S_{i,1-\mu_i}=\gamma_iG_{i,1}.
\end{equation}
Moreover, it follows that $\Psi_t$, composed with specialization, sends 
\begin{equation}\label{eq:ta-specialization}
 A_i^0(z)\longmapsto z^{-\mu_i}B_i(z),\qquad
 \Theta_i^0(z)\longmapsto C^{\mu_i}S_i(z),
\end{equation} 
and so the series on the RHS above must satisfy the unshifted version of equations \eqref{eq: drel1}--\eqref{eq: drel6}. 
The shifted version is obtained by multiplying both sides by appropriate Laurent monomials. 
The series $B_i(z), S_i(z)$ therefore satisfy Definition \ref{defi: shifted iqsp} with the
required leading terms, completing the proof. 

\subsection{Injectivity} 

We will now establish that the homomorphism $\iota_\mu$ from Theorem \ref{thm: homo rank 2} is injective. 

\Thm \label{thm: injectivity}
The homomorphism $\iota_\mu \colon \uig \to \Uq_\mu$ is injective. 
\enthm

We will prove Theorem \ref{thm: injectivity} by an associated graded argument. The proof is split over \S \ref{sss: centralizers}--\S \ref{sss: injectivity proof final}.

\subsubsection{Centralizers} 
\label{sss: centralizers}

Given a subalgebra $A \subset B$ (resp.\ element $X \in B$), let $Z_B(A)$ (resp.\ $Z_B(X)$) denote the centralizer of $A$ (resp.\ $X$) in $B$. 
Abbreviate 
\[
 D_{i,m}=C^mh_{i,m}-h_{i,-m},\qquad
 \kappa_i=K_{i,+}^{-1}K_{i,-}^{-1},\qquad
 \mathcal{Z} = \C(q) [\kappa_i^{\pm1}\mid i\in \indx].
\]
Write $\mathbf U_\mu^\pm$ for the subalgebras generated by the
$x_{i,r}^\pm$, and
\[
 \mathbf U_\mu^0
 = \C(q) [K_{i,+}^{\pm1},K_{i,-}^{\pm1},h_{i,\pm m}\mid i\in \indx,\ m\ge1].
\]
The elements $\kappa_i$ are central, and relation \eqref{eq: qaa hx} gives 
\begin{equation}\label{eq:grinj-D}
 [D_{i,m},x_{j,s}^-] = \frac{[ma_{ij}]}m(x_{j,s-m}^--C^mx_{j,s+m}^-).
\end{equation}
We will need the following auxiliary lemma. 

\Lem \label{lem:grinj-centralizers}
One has
\begin{equation}\label{eq:grinj-centralizers}
 \bigcap_{i\in \indx} Z_{\mathbf U_\mu}(D_{i,1}) = \mathbf U_\mu^0,\qquad
 \mathbf U_\mu^0 \cap Z_{\mathbf U_\mu}(\mathbf U_\mu^-) = \mathcal{Z}.
\end{equation}
\enlem

\Proof
We sketch a proof using the triangular decomposition $\mathbf U_\mu^-\otimes\mathbf U_\mu^0\otimes\mathbf U_\mu^+
\xrightarrow{\sim}\mathbf U_\mu$ from \cite[Proposition~5.1]{FT19}, together with the shuffle realizations
of the positive and negative halves, which are independent of $\mu$ (see \cite[Theorem~3.66 and Remark~3.72]{FT19b}).

Let $z_{j,a}$ and $w_{j,b}$ be the shuffle variables in $\mathbf U_\mu^-$ and $\mathbf U_\mu^+$, respectively.  
In the shuffle realization, the generators $x_{j,r}^-$ and $x_{j,r}^+$ correspond respectively to $z_{j,1}^r$ and $w_{j,1}^r$ while multiplication in each half is realized by the shuffle product. 
Choose multidegrees $\mathbf n^\pm=(n_j^\pm)_{j\in \indx}$. 
On the triangular decomposition summand
\[
(\mathbf U_\mu^-)_{\mathbf n^-}\otimes \mathbf U_\mu^0
\otimes(\mathbf U_\mu^+)_{\mathbf n^+},
\]
realized using $n_j^-$-many negative and $n_j^+$-many positive
shuffle variables of each color $j$, the operator
$\operatorname{ad}D_{i,1}$ acts as multiplication by
\begin{equation}\label{eq:grinj-shuffle}
 \sum_{j\in \indx}[a_{ij}]\left(
 \sum_{b=1}^{n_j^+}(Cw_{j,b}-w_{j,b}^{-1})
 -\sum_{a=1}^{n_j^-}(Cz_{j,a}-z_{j,a}^{-1})\right).
\end{equation} 
In particular, it follows that all $\operatorname{ad}D_{i,1}$ simultaneously act as multiplication by zero precisely on $\mathbf{U}_\mu^0$, yielding the first statement.

For the second statement, take $P \in \mathbf{U}_\mu^0$ and write it as 
$P=\sum_{\mathbf a,\mathbf b}(\prod_iK_{i,+}^{a_i}K_{i,-}^{b_i})
p_{\mathbf a,\mathbf b}(h)$, for $\mathbf{a} = (a_i)_{i \in \indx} \in \Z^N$, where each $p_{\mathbf a,\mathbf b}(h)$ is a polynomial in finitely many $h_{i,m}$. Suppose that $P \in Z_{\mathbf U_\mu}(\mathbf U_\mu^-)$. 
The triangular decomposition, together with 
\[
h_{i,m}x^+_j(z)
=x^+_j(z)\left(h_{i,m} - \textstyle\frac{[ma_{ij}]}m z^m\right), 
\]
implies that 
\eq \label{eq: centralizer proof1}
q^{\sum_i a_{ij}(b_i-a_i)}
p_{\mathbf a,\mathbf b}\bigl(h-v_j(z)\bigr)
=p_{\mathbf a,\mathbf b}(h),
\qquad
(v_j(z))_{i,m} = \textstyle\frac{[ma_{ij}]}m z^m. 
\eneq 
Comparing the top degree (with respect to the number of the variables $h_{i,m}$ occurring) monomials on both sides above, we see that $\sum_i a_{ij}(b_i-a_i)$ for each $i \in \indx$. Since the Cartan matrix is invertible, $\mathbf{a} = \mathbf{b}$. In particular $\prod_iK_{i,+}^{a_i}K_{i,-}^{b_i} = \prod_i \kappa_i^{-a_i} \in \mathcal{Z}$. 

It remains to be shown that each $p_{\mathbf a,\mathbf a}(h)$ is constant. Equality \eqref{eq: centralizer proof1} has now become $p(h-v_j(z))=p(h)$. Iterating this identity yields $p(h-nv_j(z))=p(h)$ for every $n >0$. 
Since this is a polynomial identity in \(n\) over a field of characteristic zero, it holds with an indeterminate \(t\) in place of \(n\). 
Differentiating at \(t=0\) yields
\[
\sum_{m\ne0}\sum_{i \in \indx}
\frac{[ma_{ij}]}m z^m
\frac{\partial p}{\partial h_{i,m}}=0.
\]
Comparing powers of $z$, we obtain, for each $m \geq 1$ and $j \in \indx$: 
\[
\sum_{i\in \indx}[ma_{ij}]
\frac{\partial p}{\partial h_{i,m}}=0. 
\]
The matrix \(\bigl([ma_{ij}]\bigr)_{i,j}\) is invertible over \(\mathbb C(q)\) -- at \(q=1\) it specializes to \(m(a_{ij})\), whose determinant is nonzero. Hence every partial derivative of \(p\) vanishes, so \(p\) is constant. This proves $P\in \mathcal{Z}$. The reverse inclusion $\mathcal{Z} \subset \mathbf U_\mu^0 \cap Z_{\mathbf U_\mu}(\mathbf U_\mu^-)$ is obvious. 
\enproof

\subsubsection{Leading terms of Drinfeld generators} 

Endow the algebra $\mathbf U_\mu$ with the $\mathbb Z$-grading $\mathbf U_\mu = \bigoplus_d \mathbf{U}_\mu[d]$ via 
\[
 \deg x_{i,r}^-=1,\qquad \deg x_{i,r}^+=-1,\qquad
 \deg h_{i,\pm m}=\deg K_{i,\pm}^{\pm1}=0,
\]
and let $F_d\mathbf U_\mu=\bigoplus_{e\le d}\mathbf U_\mu[e]$ be the associated filtration.

\Lem
The following congruences hold, for some $z_{i,m}\in \mathcal{Z}$:
\begin{align}
 \iota_\mu(A_{i,r})&\equiv x_{i,\mu_i-r}^- 
       &&\pmod{F_0\mathbf U_\mu},\label{eq:grinj-real}\\
 \iota_\mu(H_{i,m})&\equiv D_{i,m}+z_{i,m} 
       &&\pmod{F_{-1}\mathbf U_\mu}\label{eq:grinj-imag},
\end{align}
for all $r \in \Z$ and $m \geq 1$. 
\enlem

\Proof
A summand of the correction term $Q_i$ labelled by the interval $[a,b]$ contains $2(b-a)+1$ generators corresponding to positive simple roots. 
Since $a<b$, it has degree at most $-3$. Thus  $Q_i\in F_{-3}\mathbf U_\mu$, and the explicit description of $\iota_\mu$ from Theorem \ref{thm: homo rank 2} yields \eqref{eq:grinj-real} on the Levendorskii generators.
Moreover, refining \eqref{eq: H1 emb formula1} shows that 
\[
\iota_\mu(H_{i,1}) = 
 D_{i,1}+(1-q^{-4})\gamma_iC^{\mu_i}x_{i,0}^+x_{i,-1}^+\kappa_i 
 - 2^{-\delta_{\mu_i,0}}\gamma_i\mi [ \iota_\mu(A_{i,0}), Q_i]_{q^2}. 
\]
The final $q^2$-commutator has degree at most $-2$, so 
\begin{equation}\label{eq:grinj-H1}
 \iota_\mu(H_{i,1})\equiv D_{i,1} \pmod{F_{-1}\mathbf U_\mu}.
\end{equation} 
The inductive definition \eqref{eq: Air defi g} and \eqref{eq:grinj-D} allow us to deduce \eqref{eq:grinj-real} for all $r \in \Z$. 

Now let $m \geq 1$. The element $\iota_\mu(H_{i,m})$ commutes with every $\iota_\mu(H_{j,1})$. If its highest non-zero homogeneous component had positive degree, \eqref{eq:grinj-H1} would force that component to commute with every $D_{j,1}$, contradicting Lemma~\ref{lem:grinj-centralizers}. Hence $\iota_\mu(H_{i,m})\in F_0\mathbf U_\mu$. Its degree-zero component $P_{i,m}$ likewise commutes with all $D_{j,1}$, and therefore belongs to $\mathbf U_\mu^0$. 
Taking homogeneous components of degree one in \eqref{eq: grel2} gives
\[
 [P_{i,m},x_{j,s}^-] = \frac{[ma_{ij}]}m(x_{j,s-m}^--C^mx_{j,s+m}^-).
\]
Therefore, by \eqref{eq:grinj-D}, $P_{i,m}-D_{i,m}$ commutes with every
$x_{j,s}^-$. The sought congruence \eqref{eq:grinj-imag} now follows from the second assertion of Lemma~\ref{lem:grinj-centralizers}. 
\enproof

\subsubsection{Proof}
\label{sss: injectivity proof final} 

We will now complete the proof of Theorem \ref{thm: injectivity}. 
Define a filtration $\mathscr{F}$ on $\mathbf U^\imath_{\mu,\gamma}$ by setting $\deg A_{i,r}=1$ and $\deg H_{i,m}=0$. 
Consider the semi-direct product $\mathbf{U}_\mu^- \rtimes \C(q)[t_{i,m}\mid i\in \indx,\ m\ge1]$, where $t_{i,m}$ acts on $\mathbf{U}_\mu^-$ via \eqref{eq:grinj-D}. 
We claim that there is a surjective algebra homomorphism 
\eq \label{eq:grinj-theta}
 \pi \colon \mathbf U_\mu^- \rtimes \C(q)[t_{i,m}\mid i\in \indx,\ m\ge1] \twoheadrightarrow \operatorname{gr}_{\mathscr{F}} \mathbf U^\imath_{\mu,\boldsymbol\gamma}, \qquad  x_{i,s}^- \mapsto \bar{A}_{i,\mu_i-s}, \qquad t_{i,m} \mapsto \bar{H}_{i,m},
\eneq
where the bars denote leading terms. 
Indeed, passing to leading terms, relation \eqref{eq: drel5} becomes the usual quadratic relation, and the $\imath$Drinfeld--Serre relation \eqref{eq: drel6} becomes the usual Drinfeld--Serre relation. 
The imaginary generators commute, and the real-imaginary relations are preserved, matching \eqref{eq:grinj-D}. 

By \eqref{eq:grinj-real}--\eqref{eq:grinj-imag}, $\iota_\mu$ is
filtered. The composition $(\operatorname{gr}\iota_\mu)\circ\pi$ sends 
\[
 u\otimes p(t_{i,m}) \longmapsto
 u\,p\bigl(C^mh_{i,m}-h_{i,-m}+z_{i,m}\bigr).
\]
The Cartan elements in the expression on the RHS are algebraically independent:
each contains its own variable $h_{i,-m}$ with coefficient $-1$, and the $z_{i,m}$ are central. 
The triangular decomposition of $\mathbf U_\mu$ (see \cite[Proposition 5.1]{FT19}) consequently makes this composition injective. Since $\pi$ is surjective, $\operatorname{gr}\iota_\mu$ must be injective. As the source filtration $\mathscr{F}$ is exhaustive and nonnegative, it follows that $\iota_\mu$ is injective, completing the proof of Theorem~\ref{thm: injectivity}. 

As a corollary, we also obtain the following PBW theorem for $\mathbf U^\imath_{\mu,\boldsymbol\gamma}$. 

\Cor \label{cor: PBW} 
There is an algebra isomorphism 
\eq 
 \pi \colon \mathbf U_\mu^- \rtimes \C(q)[t_{i,m}\mid i\in \indx,\ m\ge1] \twoheadrightarrow \operatorname{gr}_{\mathscr{F}} \mathbf U^\imath_{\mu,\boldsymbol\gamma}, \qquad  x_{i,s}^- \mapsto \bar{A}_{i,\mu_i-s}, \qquad t_{i,m} \mapsto \bar{H}_{i,m},
\eneq
\encor 

\Proof
Indeed, the proof above established that  $(\operatorname{gr}\iota_\mu)\circ\pi$ is injective, which forces the injectivity of $\pi$. 
\enproof

\subsection{Graded coideal property} 

By Theorems \ref{thm: homo rank 2} and \ref{thm: injectivity}, we may view $\mathbf{U}^\imath_{\mu,\boldsymbol\gamma}$ as a subalgebra of $\mathbf{U}_\mu$ via $\iota_{\mu}$. We will write $\bar{\mathbf{U}}^\imath_{\mu,\boldsymbol\gamma}$ for its image in~$\bar{\mathbf{U}}_\mu$.

\Thm \label{thm: coideal property 2}
For any $\mu_1, \mu_2 \in \Lambda_-$, we have 
\[
\Delta_{\mu_1, \mu_2} ( \bar{\mathbf{U}}^\imath_{\mu,\boldsymbol\gamma} ) \subseteq \bar{\Uq}_{\mu_1} \otimes \bar{\mathbf{U}}^\imath_{\mu_2,\boldsymbol\gamma'},  
\]
with  $\boldsymbol\gamma' = (C^{\mu_{1,1}}\gamma_1, \cdots, C^{\mu_{1,N}}\gamma_N, C)$. 
\enthm

The proof is an adaptation of the proof of Theorem \ref{thm: coideal property 1}. We will sketch it below in \S \ref{sss: est FT}--\S \ref{sss: coideal proof gen rank}. 

\subsubsection{Estimates of Finkelberg--Tsymbaliuk coproduct} 
\label{sss: est FT} 

Abbreviate $\alpha_i = \mu_{1,i}$ and $\beta_i = \mu_{2,i}$. 

\Lem \label{lem:recurrence}
For every $k\geq0$,
\begin{equation} \label{eq:recurrence}
\Delta_{\mu_1,\mu_2} (x^-_{i,\alpha_i-k}) = x^-_{i,\alpha_i-k}\otimes1+K_i^{-1}\otimes x^-_{i,-k}+T_{i,k},
\end{equation}
where $T_{i,0}=0$, and, for $k\geq1$,
\begin{equation} \label{eq:remainder-algebra}
T_{i,k}\in\Uq_{\mu_1}\otimes \left\langle x^-_{j,0}\ (j\neq i),\ x^-_{i,s}\ (-k+1\leq s\leq0) \right\rangle.
\end{equation} 
\enlem 

\Proof
We prove this by induction on $k$. 
The $k=0$ case follows directly from the definition of $\Delta_{\mu_1,\mu_2}$. 
Define the auxiliary Cartan element 
\begin{equation}\label{eq:dual-cartan}
h^\vee_{i,-1} = -\sum_{j=1}^N \frac{[\min(i,j)][N+1-\max(i,j)]}{[N+1]}h_{j,-1}, 
\end{equation}
so that 
\begin{equation} \label{eq:dual-action}
[h^\vee_{i,-1},x^-_{j,r}]=\delta_{ij}x^-_{i,r-1}, \qquad [h^\vee_{i,-1},K_j]=0.
\end{equation}
The coproduct formula for $h_{i,-1}$ in \cite[Theorem~10.22]{FT19} shows that
\begin{equation}\label{eq:cartan-coproduct}
\Delta(h^\vee_{i,-1}) = h^\vee_{i,-1}\otimes1+1\otimes h^\vee_{i,-1}+Z_i, \qquad
 Z_i\in\Uq_{\mu_1}\otimes\langle x^-_{j,0}\mid j \in \indx \rangle.
\end{equation} 

Consider the commutator of \eqref{eq:cartan-coproduct} and \eqref{eq:recurrence}. The commutator of  $h^\vee_{i,-1}\otimes1+1\otimes h^\vee_{i,-1}$ and  $x^-_{i,\alpha_i-k}\otimes1+K_i^{-1}\otimes x^-_{i,-k}$   produces  $x^-_{i,\alpha_i-k-1}\otimes1+K_i^{-1}\otimes x^-_{i,-k-1}$. Moreover, commuting $h^\vee_{i,-1}\otimes1+1\otimes h^\vee_{i,-1}$ with $T_{i,k}$ only lowers the second index in $x^-_{i,s}$, ensuring that the resulting expressions is in \eqref{eq:remainder-algebra}. 
Commuting $Z_i$ with $T_{i,k}$ or $x^-_{i,\alpha_i-k}\otimes1+K_i^{-1}\otimes x^-_{i,-k}$ introduces only zero modes and, possibly, $x^-_{i,-k}$.
Thus, the new overall remainder $T_{i,k+1}$ satisfies \eqref{eq:remainder-algebra} for $k+1$. 
\enproof

\subsubsection{Lower boundary generator} 

Given $i \in \indx$, recursively define: 
\begin{align}
\mathcal A^{(i)}_{i,i} &=A_{i,\beta_i-1},\\
\mathcal A^{(i)}_{a,i} &=[\mathcal A^{(i)}_{a+1,i},A_{a,\beta_a}]_q\quad(a<i),\label{eq:nested}\\
\mathcal A^{(i)}_{a,b} &=[\mathcal A^{(i)}_{a,b-1},A_{b,\beta_b}]_q\quad(b>i), \\ 
s_{ab}&=(-1)^{\ell_{ab}+\epsilon_{ab}(i)}q^{-\ell_{ab}}d^{\epsilon_{ab}(i)}.
\end{align} 

\Lem \label{lem:bottom}
For every $i \in \indx$,
\begin{equation} \label{eq:bottom-coproduct}
 \Delta_{\mu_1,\mu_2} \bigl(\iota_{\mu}(A_{i,\mu_i-1})\bigr)
 =x^-_{i,1}\otimes1+
 \sum_{a\leq i\leq b}s_{ab}L_{a,i}R_{i,b}K_i
       \otimes\mathcal A^{(i)}_{a,b}.
\end{equation}
In particular, its image lies in $\bar{\Uq}_{\mu_1}\otimes \bar{\mathbf{U}}_{\mu_2,\boldsymbol\gamma'}$.
\enlem 

\Proof
This follows by a straightforward but somewhat laborious calculation from \cite[Theorem 10.22]{FT19}. The details can be found in \cite{LevQSP}. 
\enproof

\subsubsection{Proof} 
\label{sss: coideal proof gen rank}

We can now complete the proof of Theorem \ref{thm: coideal property 2}. Thanks to Lemma \ref{lem:bottom}, we only  need to check that, for each $i \in \indx$, the elements $\iota_\mu(A_{i,0}), \iota_\mu(A_{i,-1})$ and $\iota_\mu(A_{i,\mu_i})$ lie in $\bar{\Uq}_{\mu_1}\otimes \bar{\mathbf{U}}_{\mu_2,\boldsymbol\gamma'}$. Indeed, 
\begin{equation}\label{eq:zero-mode}
\Delta_{\mu_1, \mu_2}(A_{i,\mu_i}) = \Delta_{\mu_1, \mu_2}(x^-_{i,0}) =
\begin{cases}
x^-_{i,0}\otimes1, &\alpha_i<0,\\
x^-_{i,0}\otimes1+K_i^{-1}\otimes A_{i,\beta_i}, &\alpha_i=0.
\end{cases}
\end{equation}
Moreover, it follows from Lemma~\ref{lem:recurrence}, with $k=-\beta_i-1$, that 
\begin{equation}\label{eq:minus-one}
\Delta_{\mu_1, \mu_2}(A_{i,-1}) = \Delta_{\mu_1, \mu_2}(x^-_{i,\mu_i+1}) = 
x^-_{i,\mu_i+1}\otimes1 + K_i^{-1}\otimes A_{i,-1} + T_{i,-\beta_i-1}, 
\end{equation}
where $T_{i,-\beta_i-1} \in \bar{\Uq}_{\mu_1}\otimes  \bar{\mathbf{U}}_{\mu_2,\boldsymbol\gamma'}$. 
For the final generator $\iota_\mu(A_{i,0})$, we again use Lemma~\ref{lem:recurrence}, with $k=-\beta_i$, and 
\[
\Delta_{\mu_1, \mu_2}(x^+_{i,0}K_i^{-1}) = x^+_{i,0}K_i^{-1}\otimes1 + K_i^{-1}\otimes x^+_{i,0}K_i^{-1}.
\]
Since $C^{\mu_i}\gamma_i=C^{\beta_i}\gamma'_i$, this gives
\begin{align}
\Delta_{\mu_1, \mu_2}(A_{i,0}) = \bigl(x^-_{i,\mu_i} - q^{-2}C^{\mu_i}\gamma_i x^+_{i,0}K_i^{-1}\bigr)\otimes1 
+ K_i^{-1}\otimes A_{i,0} + T_{i,-\beta_i}, 
\end{align}
which lies in $\bar{\Uq}_{\mu_1}\otimes  \bar{\mathbf{U}}_{\mu_2,\boldsymbol\gamma'}$. 
This completes the proof of Theorem \ref{thm: coideal property 2}.

\subsubsection{Graded coideal reformulation} 

Now choose $\boldsymbol\gamma \in (\C^\times)^{N+1}$ with $C=1$. Set 
\[
\bar{\Uq}_{\bullet, \boldsymbol\gamma}^\imath = \bigoplus_{\mu \in \Lambda_-} \bar{\Uq}_{\mu, \boldsymbol\gamma}^\imath \mathbf{e}(\mu). 
\]
Theorem \ref{thm: coideal property 2} can be reformulated in the following way. 

\Cor
The subalgebra $\bar{\Uq}_{\bullet, \boldsymbol\gamma}^\imath$ is a graded coideal of $\bar{\mathbf{U}}_\bullet$. 
\encor

\newcommand{\etalchar}[1]{$^{#1}$}
\providecommand{\bysame}{\leavevmode\hbox to3em{\hrulefill}\thinspace}
\providecommand{\MR}{\relax\ifhmode\unskip\space\fi MR }
\providecommand{\MRhref}[2]{%
  \href{http://www.ams.org/mathscinet-getitem?mr=#1}{#2}
}
\providecommand{\href}[2]{#2}

\end{document}